\newtheorem{satz}{Theorem}
\newtheorem{proposition}[satz]{Proposition}
\newtheorem{theorem}[satz]{Theorem}
\newtheorem{lemma}[satz]{Lemma}
\newtheorem{definition}[satz]{Definition}
\newtheorem{corollary}[satz]{Corollary}
\newtheorem{remark}[satz]{Remark}
\newtheorem{example}[satz]{Example}
\def\no{\noindent}
\def\eps{\varepsilon}
\def\_phi{\varphi}
\def\a{\alpha}
\def\la{\lambda}
\def\F{{\mathbb F}}
\def\L{\Lambda}
\def\m{\times}
\def\t{\tilde}
\def\ov{\overline}
\def\C{{\mathbb C}}
\def\R{{\mathbb R}}
\def\E{\mathsf {E}}
\def\T{{\mathbb T}}
\def\Z_N{{\mathbb Z}_N}
\def\Z{{\mathbb Z}}
\def\N{{\mathbb N}}
\def\Gr{{\mathbf G}}
\def\D{{\mathbb D}}
\def\oT{{\rm T}}
\def\G{\Gamma}
\def\FF{\widehat}
\def\c{\circ}
\def\D{\Delta}
\def\Cf{{\mathcal C}}
\def\gs{\geqslant}
\def\T{\mathsf {T}}
\def\M{\mathbf {M}}
\author{Shkredov I.D.}
\title{ Some new results on higher energies
\footnote{
This work was supported by grant RFFI NN
06-01-00383, 11-01-00759, Russian Government project 11.G34.31.0053,
Federal Program "Scientific and scientific--pedagogical staff of innovative Russia" 2009--2013
and
grant Leading Scientific Schools N 2519.2012.1.}
}
\date{}
\begin{document}
\maketitle

\begin{center}
 Annotation.
\end{center}

{\it \small
    In the paper we develop the method of higher energies.
    New upper bounds for the additive energies of convex sets, sets $A$ with small $|AA|$
    and
    $|A(A+1)|$ are obtained.
    We prove new structural results, including higher sumsets,
    and develop the notion of dual popular difference sets.
}
\\

\section{Introduction}
\label{sec:introduction}

The method of higher energies (or, in other words, the method of higher moments of convolutions)
was introduced in \cite{ss}, was developed in \cite{ss_E_k,sv}
and found a series of applications in \cite{A(A+1),KR,Li,Li2,ss2,s_ineq,s_heilbronn}.
In the paper we obtain some new results in the direction, using
so--called operator (or eigenvalues) method from \cite{s,s_ineq},
which we recall in section \ref{sec:eigenvalues}.

Our main results are contained in three sections \ref{sec:applications}--\ref{sec:applications3}.
In section \ref{sec:applications} we apply the eigenvalues method to obtain new
upper bounds for the additive energy of some families of sets.
Let us formulate just one result
in the direction
which concerns convex subset (that is the image of a convex map) of $\R$.

\begin{theorem}
    Let $A \subseteq \R$ be a convex set.
    Then
    \begin{equation}\label{f:convex_energy_pred}
        \E (A) \ll |A|^{\frac{32}{13}} \log^{\frac{71}{65}} |A| \,.
    \end{equation}
\label{t:convex_energy_pred}
\end{theorem}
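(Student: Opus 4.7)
My plan is to combine two ingredients: an incidence-geometric estimate specific to convex sets, and the eigenvalue (operator) method of Section~\ref{sec:eigenvalues}. Writing $r(x) := r_{A-A}(x) = |\{(a,b)\in A\times A : a - b = x\}|$, I want to bound $\E(A) = \sum_x r(x)^2$ by first controlling a higher moment of $r$ via the Szemer\'edi--Trotter theorem and then sharpening the resulting bound on $\E(A)$ through a spectral interpolation that beats the naive $|A|^{5/2}$ bound coming from Cauchy--Schwarz.

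For the first step, parametrising $A = \{f(1), \dots, f(|A|)\}$ with $f$ strictly convex, each equation $f(i) - f(j) = x$ confines the admissible $(i,j)$ to a strictly convex curve in the $(i,j)$-plane indexed by $x$. Applying Szemer\'edi--Trotter to the resulting family of curves yields a level-set bound of the form $|\{x : r(x) \ge \tau\}| \ll |A|^{a}/\tau^{b} \cdot \mathrm{polylog}|A|$ for appropriate exponents $a,b$; dyadic summation produces a higher-moment estimate such as $E_3(A) := \sum_x r(x)^3 \ll |A|^3 \log|A|$, and, more importantly, the finer dyadic level-set data that the second step will consume.

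For the second step, the operator/eigenvalue method views $E_k(A) = \sum_i \sigma_i^{2k}$ as the $2k$-th power sum of the singular values of the convolution operator $T_A : f \mapsto f * \mathbf{1}_A$. H\"older applied on the spectrum yields interpolation inequalities relating $\E(A)$, $E_3(A)$, and $|A|$ which are sharper than the trivial $\E(A)^2 \le |A|^2 E_3(A)$ (which recovers only the classical $|A|^{5/2}$ bound). Fed with the dyadic level-set information from the first step and optimised over the choice of level, these inequalities are designed to produce the exponent $32/13$, with the accumulated dyadic losses consolidating into the factor $\log^{71/65}|A|$.

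The main obstacle is the balancing in the second step. The naive interpolation yields only $|A|^{5/2}$; to reach $32/13$ one must apply the spectral inequalities not to $r$ directly but to restrictions of $r$ to dyadic level sets, then reassemble the pieces after choosing the optimal level. The exponent $32/13$ arises from this coupled optimisation between the Szemer\'edi--Trotter input and the spectral comparison, and tracking the logarithmic losses carefully enough to land on exactly $\log^{71/65}|A|$ is the most technically delicate part of the argument.
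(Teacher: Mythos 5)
Your plan correctly names the two broad ingredients the paper also uses---the Szemer\'edi--Trotter-style level-set bound for convex sets (Lemma~\ref{l:E_3_convex}) and a spectral/operator argument---but the actual mechanism you sketch for the second ingredient is not the one that works, and the specific ideas that produce the exponent $32/13$ are missing.

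First, the operator you propose, $T_A\colon f\mapsto f*\mathbf{1}_A$, is an $N\times N$ circulant whose singular values are the Fourier coefficients $|\widehat{A}(\xi)|$, so ``H\"older on the spectrum'' of that operator is precisely the trivial interpolation $\E(A)\le\E_3(A)^{1/2}|A|$ in disguise, which only gives $|A|^{5/2}$. The paper explicitly avoids Fourier and instead works with the \emph{local} $|A|\times|A|$ matrix $\oT^{g}_{A}(x,y)=g(x-y)A(x)A(y)$ where $g=(A\circ A)D$ is restricted to a popular dyadic level set $D$ of the convolution, chosen by pigeonhole so that $\mu_0(\oT^g_A)\gg \E(A)/(l|A|)$. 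This restriction to a level set and to the diagonal block indexed by $A$ is what gives the method extra purchase.

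Second, the decisive spectral step is not a H\"older inequality but the trace identity of Proposition~\ref{p:triangles_g}:
\begin{equation*}
\sum_{x,y,z\in A} g(x-y)\,g(x-z)\,(A\circ A)(y-z)
  = \sum_{\a}\mu_\a^2(\oT^g_A)\,\langle \oT^{A\circ A}_A f_\a, f_\a\rangle,
\end{equation*}
combined with the positivity of $\oT^{A\circ A}_A$ to bound $\mu_0^3(\oT^g_A)$ by a weighted triangle count. There is then a genuinely non-obvious cut-off step: one shows the contribution of pairs with $(A\circ A)(\a-\beta)<d$ is negligible by comparing with a \emph{second} operator $\oT^A_{A,D}$ whose top singular value is controlled by $\E_3^{1/2}$. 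Finally, the surviving sum $\sigma_*$ is estimated in two competing ways, both relying on the bilinear convex-set bound $\E(A,B)\ll|A||B|^{3/2}$ from Lemma~\ref{l:E_3_convex} (not merely $\E_3(A)\ll|A|^3\log|A|$), and the optimisation over the auxiliary level $\tau$ and the constraint $\Delta\ll K$ produce $K\gg|A|^{7/13-}$.

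None of these three ideas---the local popular-level-set operator $\oT^g_A$, the triangle-count trace inequality with the threshold cut, and the two-sided bound on $\sigma_*$ via $\E(A,B)\ll|A||B|^{3/2}$---appear in your proposal, and the one concrete spectral tool you name would not go beyond $|A|^{5/2}$. As written this is an outline of what kind of argument one might hope for, not a proof, and the gap is in exactly the place you flag as ``the main obstacle.''
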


Here $\E(A)$, so--called, the {\it additive energy} of our set $A$,
which equals
the number of solution of the equation $a_1-a_2=a_3-a_4$, where $a_1,a_2,a_3,a_4\in A$.
Constant $\frac{5}{2}$ instead of $\frac{32}{13}$ was obtained in \cite{ikrt},
and after that
this constant was improved to $\frac{89}{36}$ in \cite{s_ineq}, using the eigenvalues method again.

The next section contains so--called structural results.
The most important example of such statements in additive combinatorics is, of course,
beautiful
Freiman's theorem on sets with small doubling, or, in other words, sets having small sumset,
which gives a full description of the family of sets (see \cite{tv}).
In the paper by structural results we mean another thing, namely,
having some condition on a set (basically, on higher energies of this set)
we
wish
to find some subsets of the set having small doubling or large additive energy.
As an example of such type of results, we recall  a strong structural theorem from \cite{BK_struct}.

\begin{theorem}
Let $A \subseteq \Gr$ be a symmetric set, $\tau_0$, $\sigma_0$ be nonnegative real numbers
and $A$ has the property that for any $A_*\subseteq A$, $|A_*| \gg |A|$ the following holds
$\E(A_*) \gg \E (A) = |A|^{2+\tau_0}$.
Suppose that $\T_4 (A) \ll |A|^{4+3\tau_0+\sigma_0}$.
Then there exists a function $f_{\tau_0} : (0,1) \to (0,\infty)$ with $f_{\tau_0} (\eta) \to 0$ as $\eta \to 0$
and a number $\a\ge 0$ such that there are sets $X_j,H_j\subseteq \Gr$, $B_j\subseteq A$,
$j\in [|A|^{\a-f_{\tau_0} (\sigma_0)}]$ with
$$
    |H_j| \ll |A|^{\tau_0+\a+ f_{\tau_0} (\sigma_0)} \,,\quad \quad |X_j| \ll |A|^{1-\tau_0-2\a+ f_{\tau_0} (\sigma_0)} \,,
$$
$$
    |H_j-H_j| \ll |H_j|^{1+f_{\tau_0} (\sigma_0)} \,,
$$
$$
    |(X_j+H_j) \cap B_j| \gg |A|^{1-\a-f_{\tau_0} (\sigma_0)} \,,
$$
and $B_i \cap B_j = \emptyset$ for all $i\neq j$.
\label{t:BK_structural}
\end{theorem}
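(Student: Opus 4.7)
My plan is the standard dyadic-pigeonhole + Balog--Szemer\'edi--Gowers (BSG) blueprint, followed by a greedy covering step and an iteration to produce the disjoint family $\{B_j\}$.

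First, I would decompose the energy according to popularity of differences. Writing
$$ \E(A)=\sum_x r_{A-A}(x)^2, \qquad r_{A-A}(x):=|A\cap(A+x)|, $$
and partitioning $\supp(r_{A-A})$ into dyadic level sets $P_s:=\{x:s\le r_{A-A}(x)<2s\}$, a $\log$--pigeonhole furnishes a scale $s$ at which $s^2|P_s|\gtrsim\E(A)/\log|A|$. I would \emph{define} the parameter $\a$ in the conclusion by $s\sim|A|^{\tau_0+\a}$; a dimension count then gives $|P_s|\sim|A|^{2-\tau_0-2\a}$, and every point of $P_s$ is a popular difference of $A$.

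Second, I would transfer the $\T_4$ hypothesis into an energy bound on $P_s$. The key point is that $\T_4(A)$ controls additive quadruples among popular differences: since $r_{A-A}\geq s$ on $P_s$, an eigenvalue/moment computation of the kind summarised in section \ref{sec:eigenvalues} gives
$$ \E(P_s)\ll |P_s|^3\,|A|^{\sigma_0+o(1)}, $$
so $P_s$ has nearly the largest possible energy. A quantitative asymmetric BSG theorem then extracts $H_j\subseteq P_s$ with $|H_j|\sim|A|^{\tau_0+\a+o(1)}$ and $|H_j-H_j|\ll|H_j|^{1+f_{\tau_0}(\sigma_0)}$.

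Third, I would build $X_j$ and $B_j$. Because every $h\in H_j$ satisfies $|A\cap(A+h)|\ge s$, we have $\sum_{a\in A}|A\cap(a+H_j)|\ge s|H_j|$, and pigeonholing over $a\in A$ produces a set $B_j^{0}\subseteq A$ of size $\gg|A|^{1-\a-f_{\tau_0}(\sigma_0)}$ on which $|A\cap(a+H_j)|$ is large. A greedy selection of $H_j$-translates covering $B_j^{0}$ then supplies $X_j$ of the prescribed size and a piece $B_j\subseteq B_j^{0}$ with $|(X_j+H_j)\cap B_j|\gg|A|^{1-\a-f_{\tau_0}(\sigma_0)}$. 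Finally, I would iterate on $A\setminus B_j$: the hereditary hypothesis $\E(A_*)\gg\E(A)$ for every large $A_*\subseteq A$ keeps the energy at $|A|^{2+\tau_0}$ throughout, so the same scale $s$ and the same construction reappear, yielding $\sim|A|^{\a-f_{\tau_0}(\sigma_0)}$ pairwise disjoint pieces before the argument breaks down.

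The hardest step will be calibrating $f_{\tau_0}(\sigma_0)$: the passage from the $\T_4$ bound to an energy bound on $P_s$ is only polynomially tight, and BSG further degrades the gain. To ensure $f_{\tau_0}(\sigma_0)\to 0$ as $\sigma_0\to 0$ one must use a quantitatively sharp asymmetric BSG (with near-linear loss in the energy defect) and absorb all dyadic $\log|A|$ losses into $f_{\tau_0}$; the iteration's accounting against the hereditary hypothesis also needs care so that the parameters $s$ and $\a$ do not drift between steps.
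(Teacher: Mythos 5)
The paper does not actually prove Theorem~\ref{t:BK_structural}: it is quoted verbatim from Bateman and Katz \cite{BK_struct} in the introduction as an example of a structural statement, so there is no internal proof to compare yours against. The paper's own contribution in this direction is Theorem~\ref{t:E_4_T_4} of Section~\ref{sec:applications2}, which imposes the \emph{stronger} hypothesis $\E_{3/2}(A)=|A|^{5/2}/K^{1/2}$ on top of the $\T_4$ bound and, in exchange, obtains a clean, tight conclusion by a short moment/eigenvalue computation combined with Lemma~\ref{l:E_k-identity}; it explicitly avoids trying to reprove the Bateman--Katz theorem under the weaker hereditary hypothesis.

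As for the blueprint itself, the hinge of your argument does not hold. From $\T_4(A)\ll|A|^{4+3\tau_0+\sigma_0}$ and $r_{A-A}\gs s$ on $P_s$, the inequality $\T_4(A)\gs s^4\E(P_s)$ gives only an \emph{upper} bound $\E(P_s)\ll\T_4(A)/s^4\ll|P_s|^3|A|^{\sigma_0+o(1)}$, which is vacuous for $\sigma_0\ge0$ since $\E(B)\le|B|^3$ is always true. The step ``so $P_s$ has nearly the largest possible energy'' therefore does not follow, and Balog--Szemer\'edi--Gowers has nothing to bite on, because it requires a \emph{lower} bound on energy. Neither the $\T_4$ hypothesis nor the hereditary condition supplies such a lower bound for $P_s$: the hereditary assumption $\E(A_*)\gg\E(A)$ is about subsets of $A$, whereas $P_s$ lives in $\Gr$ as a set of differences. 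The near-extremality of the Cauchy--Schwarz chain $\T_4\gs\T_3^2/\E\gs\E^3/|A|^2$ does encode structure, but extracting it is precisely the delicate part of Bateman and Katz's argument, and cannot be replaced by ``pigeonhole a dyadic level set and run BSG on it.'' A secondary issue is the third step: the total mass $\sum_{a\in A}|A\cap(a+H_j)|\ge s|H_j|\sim|A|^{2(\tau_0+\a)}$ need not be large enough to supply $|A|^{1-\a-o(1)}$ elements with meaningful intersection, so the pigeonhole producing $B_j^0$ does not close without additional input that you have not identified.
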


Here $\T_4 (A)$ is the number solution of the equation $a_1+a_2+a_3+a_4=a'_1+a'_2+a'_3+a'_4$,
$a_1,a_2,a_3,a_4,a'_1,a'_2,a'_3,a'_4 \in A$.

In the paper we need in a generalization of the
notion of the additive energy of a set.
Namely,
for all real $s\ge 1$ put
\begin{equation}\label{f:E_k_introduction}
    \E_s (A) = \sum_x |A\cap (A-x)|^s \,,
\end{equation}
Quantities $\E_s (A)$ from (\ref{f:E_k_introduction}) are exactly
that
we call higher energies.

\bigskip

Let us formulate our two main structural results.
The weaker forms of the first one were proved in \cite{ss_E_k} and \cite{s_ineq}.
From some point of view
these type of statements can be  called an optimal version of
Balog--Szemer\'{e}di--Gowers theorem, see \cite{ss_E_k}.

\begin{theorem}
    Let $A\subseteq \Gr$ be a set, $\E (A) = |A|^{3}/K^{}$,
    and $\E_3 (A) = M|A|^4 / K^2$.
    Then there is a set $A' \subseteq A$
    such that
    \begin{equation}\label{f:E_3_size_pred}
        |A'| \gg  M^{-10} \log^{-15} M \cdot |A|  \,,
    \end{equation}
    and
    \begin{equation}\label{f:E_3_doubling_pred}
        |nA'-mA'| \ll (M^{9} \log^{14} M)^{6(n+m)} K |A'|
    \end{equation}
    for every $n,m\in \N$.
\label{t:E_2_E_3_pred}
\end{theorem}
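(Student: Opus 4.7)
Write $f(x) := |A\cap (A-x)|$ so that $\E(A) = \sum_x f(x)^2 = |A|^3/K$ and $\E_3(A) = \sum_x f(x)^3 = M|A|^4/K^2$. The plan is: dyadically pigeonhole $f$ to isolate a single ``popular difference'' scale, apply the eigenvalue method of Section~\ref{sec:eigenvalues} to this scale to extract a structured subset $A' \subseteq A$, and conclude by Pl\"unnecke--Ruzsa.

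In the dyadic step one locates a threshold $\Delta > 0$ and level set $D := \{x : f(x) \asymp \Delta\}$ with $|D|\Delta^2 \gg \E(A)/\log M$ and $|D|\Delta^3 \le \E_3(A)$. Dividing yields $\Delta \ll (M|A|/K)\log M$ and $|D| \gg (K|A|/M^2)\log^{-3} M$. The truncation is performed so that only the $O(\log M)$ dyadic scales clustered near $\Delta$ contribute to $\E(A)$: contributions where $f$ is too small are dominated by $\Delta_0 \sum_x f(x)$ and contributions where $f$ is too large are cut off by $\E_3(A)$. This restriction to $\log M$ (rather than $\log |A|$) scales is the origin of the $\log M$ losses in~(\ref{f:E_3_size_pred})--(\ref{f:E_3_doubling_pred}).

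The core step applies the operator method to the self-adjoint convolution-type operator built from $1_A$ and $1_D$: its second-order trace is essentially $\E(A)$ and its third-order trace is controlled by $\E_3(A)$ together with an auxiliary $\E_3$-type quantity for $D$, so a spectral decomposition exposes a top eigenspace corresponding to a set $A' \subseteq A$ with $|A'| \gg M^{-10}\log^{-15}M \cdot |A|$, contained in an enveloping structured set $H$ of size $\ll K|A'|$ whose doubling is controlled by $M^{9}\log^{14} M$. Pl\"unnecke--Ruzsa applied to $H$ then yields $|nH - mH| \ll (M^{9}\log^{14} M)^{n+m}|H| \ll (M^{9}\log^{14} M)^{n+m}\,K|A'|$, which, via $A' \subseteq H$, gives~(\ref{f:E_3_doubling_pred}) with slack $6(n+m)$ in the exponent to absorb the Pl\"unnecke constants and the $\log M$ factors. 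The main obstacle is the spectral extraction: obtaining the optimal power $M^{-10}$ on the density of $A'$ requires iterating the eigenvalue analysis and carefully tracking how $\E(A)$ and $\E_3(A)$ jointly control the top eigenvectors, rather than running a single-shot Balog--Szemer\'{e}di--Gowers graph argument, which would lose a factor $K^{O(1)}$ in the size of $A'$ and turn the linear dependence on $K$ in~(\ref{f:E_3_doubling_pred}) into an exponential one.
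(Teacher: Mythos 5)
Your high-level architecture (dyadic pigeonhole on $|A_x|$, eigenvalue method, Pl\"unnecke--Ruzsa at the end) matches the paper's, and your level-set setup with $|D|\Delta^2 \gg \E(A)/\log M$, $|D|\Delta^3 \le \E_3(A)$ is essentially correct. But the core step is misidentified, and the missing idea is precisely the one that makes the theorem work.

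You propose that a spectral decomposition of an operator built from $A$ and $D$ ``exposes a top eigenspace corresponding to a set $A'$ contained in an enveloping structured set $H$,'' and you claim that a single-shot Balog--Szemer\'edi--Gowers argument would incur a $K^{O(1)}$ loss. Neither is what happens, and neither is justified as stated: a spectral decomposition does not produce a \emph{set} from a top eigenspace, and you never explain how $H$ is to be manufactured. The paper's actual mechanism is to aim the structure-finding machinery at $D$ itself, not at $A$. Concretely: the operator inequalities (Corollary~\ref{cor:mu_energy_mu_g}, Proposition~\ref{p:triangles_g}) together with two applications of Cauchy--Schwarz produce the bound
$$\mu_0^4(\oT_1)\,\sigma^2 \ll \E_3(A)\,\Delta^{4}\,\E_3(A)^{2/3}|D|^{1/3}\E(D)^{1/3}\,,$$
which, after comparing with the pigeonhole lower bound on $\mu_0(\oT_1)$, yields $\E(D)\gg |D|^3/(M^{9}\log^{14}M)$. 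This is the crucial \emph{$K$-free} energy estimate. BSG (Theorem~\ref{t:BSzG}) is then applied once, to $D$, giving $D'\subseteq D$ with $|D'|\gg \mu|D|$ and $|D'+D'|\ll \mu^{-6}|D'|$ where $\mu\gg M^{-9}\log^{-14}M$ depends only on $M$. Pl\"unnecke--Ruzsa on $D'$ gives $|nD'-mD'|\ll\mu^{-6(n+m)}|D'|$. Finally, since $\sum_{x\in D}(A\circ A)(x)\gg |A|^2/(LM)$, an averaging argument produces a translate $x$ with $|(A-x)\cap D'|\gg M^{-10}\log^{-15}M\cdot|A|$, and one sets $A'=A\cap(D'+x)$; then $nA'-mA'\subseteq nD'-mD'$ and the size comparison between $|D'|$ and $K|A'|$ delivers (\ref{f:E_3_doubling_pred}). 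Your worry about $K^{O(1)}$ losses applies to BSG run on $A$; running it on $D$, whose energy ratio is controlled purely by $M$, is exactly what avoids those losses. No iteration of the spectral analysis is used or needed.
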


Interestingly, that the generality of Theorem \ref{t:E_2_E_3_pred} allows
us to prove a "non--trivial"\,
version
of Theorem \ref{t:convex_energy_pred},
that is a bound of the form $\E(A) \ll |A|^{5/2-\eps_0}$, where $\eps_0 >0$ is an absolute constant.
A similar
proof takes place in the case of multiplicative subgroups of $\Z/p\Z$,
$p$ is a prime number (see Remark \ref{r:worker-peasant}).

\bigskip

The second structural result is the following.

\begin{theorem}
    Let $A\subseteq \Gr$ be a set, $\E_{3/2} (A) = |A|^{5/2}/K^{1/2}$, and $\T_4 (A) = M|A|^7 / K^3$.
    Then there is a set $A' \subseteq A$
    such that
    \begin{equation}\label{f:E_4_size_pred}
        |A'| \gg  \frac{|A|}{M K}  \,,
    \end{equation}
    and
    \begin{equation}\label{f:E_4_energy_pred}
        \E(A') \gg \frac{|A'|^3}{M} \,.
    \end{equation}
\label{t:E_3/2_T_4_introduction}
\end{theorem}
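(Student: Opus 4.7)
The plan is to combine a dyadic pigeonhole on the representation function $r(x) := |A \cap (A-x)|$ with the Plancherel identity
\[
\T_4(A) = \sum_{x+y+z+w=0} r(x) r(y) r(z) r(w),
\]
(which holds because $\widehat{r} = |\widehat{1_A}|^2 \ge 0$), and then to extract $A'$ using the operator method from Section~\ref{sec:eigenvalues}.

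First I would partition the support of $r$ into dyadic level sets $P_j := \{x : 2^j \le r(x) < 2^{j+1}\}$. Since $\sum_j 2^{3j/2} |P_j|$ matches $\E_{3/2}(A) = |A|^{5/2}/K^{1/2}$ up to constants, and there are only $O(\log |A|)$ non-empty levels, a pigeonhole produces some $\Delta := 2^j$ and symmetric $P := P_j$ with
\[
|P|\, \Delta^{3/2} \gg \frac{|A|^{5/2}}{K^{1/2} \log |A|}, \qquad r(x) \asymp \Delta \text{ for } x \in P.
\]
Restricting the $\T_4$ identity above to $(x,y,z,w) \in P^4$, and noting that $|\{(x,y,z,w) \in P^4 : x+y+z+w=0\}| = \E(P)$ since $P = -P$, yields
\[
\E(P) \;\le\; \frac{\T_4(A)}{\Delta^4} \;=\; \frac{M|A|^7}{K^3 \Delta^4}.
\]

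Next I would feed these two bounds into the eigenvalue machinery of Section~\ref{sec:eigenvalues}. The convolution operator $T g := r \ast g$ is diagonal in the Fourier basis with eigenvalues $|\widehat{1_A}(\xi)|^2$; its compression by $1_P$ has Hilbert--Schmidt norm governed by $\E(P)^{1/2}$ and operator norm at most $|P|$. Combined with the popularity estimate $|P|\Delta^{3/2} \gg \E_{3/2}(A)/\log|A|$, this pins down a high-multiplicity eigenspace whose ``top vector'' must essentially live on a subset of $A$. Unwinding the method then yields $A' \subseteq A$ satisfying $|A'| \gg |A|/(MK)$ and $\E(A') \gg |A'|^3/M$, with the logarithmic losses from the dyadic pigeonhole absorbed into $\gg$.

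The main obstacle I expect is precisely this last transfer: producing $A'$ as a subset of $A$ itself (rather than of $P \subseteq A-A$) with the correct balance of density and energy. A direct Balog--Szemer\'{e}di--Gowers-style argument applied to $P$ would deliver only a small-doubling subset of $A-A$, and translating it back into $A$ would cost further powers of $K$, destroying the linear-in-$MK$ density in (\ref{f:E_4_size_pred}) and the factor $1/M$ in (\ref{f:E_4_energy_pred}). The operator/eigenvalue viewpoint appears essential here, since it reads the structure of $A$ directly off the spectrum of the convolution operator that encodes both the popular differences $P$ and the fourth-moment constraint $\T_4$.
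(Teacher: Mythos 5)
Your proposal captures the right first half of the argument---the identity
$$
\T_4(A) \;=\; \sum_{x+y+z+w=0} r(x)\,r(y)\,r(z)\,r(w)
$$
(or in the paper's language, the bound $\mu_0^8(\oT^g_A) \ll \E_4(A)\cdot\T_4(A)$ with $g = (A\c A)^{1/2}$), and the lower bound on the main eigenvalue coming from $\E_{3/2}(A)$---but it misses the mechanism that actually produces $A' \subseteq A$, and you are right to worry that this is where your approach breaks.

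The gap is concrete: you end up with a popular-difference set $P \subseteq A-A$ with $\E(P)$ small, and you then appeal vaguely to ``eigenvalue machinery'' to convert this into a subset of $A$. That step does not exist in the form you describe, and chasing it through Balog--Szemer\'{e}di--Gowers on $P$ would indeed destroy the $1/(MK)$ density as you suspect. The paper does something structurally different: it does not dyadically decompose $A-A$ at all in the case $s = 3/2$. Instead, from $\mu_0^8(\oT^g_A) \gg (\E_{3/2}(A)/|A|)^8$ combined with the $\T_4$ bound it first extracts a \emph{lower} bound $\E_4(A) \gg |A|^5/(MK)$. Then the decisive ingredient is Lemma~\ref{l:E_k-identity} in the form
$$
\E_4(A) \;=\; \sum_{s,t\in\Gr} \E(A_s, A_t), \qquad A_s := A\cap(A-s),
$$
which expresses the fourth energy as a sum of pairwise additive energies of \emph{subsets of $A$}. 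After discarding the pairs with $|A_s|$ small (this loses only a constant factor, since $\sum_{|A_s|<\eta} \sum_t \E(A_s,A_t) \le \eta |A| \E_{3}(A)$-type bounds control the tail), one normalizes by $|A_s|^{3/2}|A_t|^{3/2}$, compares against $\E_{3/2}(A)^2$, and extracts a single pair $(s,t)$ with $|A_s|,|A_t|\gg |A|/(MK)$ and $\E(A_s,A_t) \gg |A_s|^{3/2}|A_t|^{3/2}/M$; a final Cauchy--Schwarz gives $\E(A')\gg |A'|^3/M$ for $A' = A_s$ or $A_t$. This identity is precisely the device that lands you inside $A$ without any transfer from $A-A$, and it is the missing piece in your write-up. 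Without it (or a substitute), your argument does not close.
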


It is easy to see that Theorem \ref{t:E_3/2_T_4_introduction} is tight
(see e.g. Remark \ref{r:L+H}).
The assumption of our result is stronger than the assumption of Theorem \ref{t:BK_structural}.

Popular difference sets are very simple and important objects in additive combinatorics
(see e.g. \cite{Gow_4,Gow_m,tv}).
In section \ref{sec:applications3} we develop an idea of Bateman--Katz from \cite{BK_AP3,BK_struct}
that every popular set has a companion, which we call a dual popular set.
As an application, our method allows find a nontrivial relation between $\E (A)$ and $\E_s (A)$, $s\in [1,2]$,
see Theorem \ref{t:dual_bounds} or Corollary \ref{c:connected}.
It is interesting that for $s>2$ there is no such connection at all.

Note, finally, that the arguments of the paper are elementary in the sense
that they do not use Fourier transform.

The author is grateful to Tomasz Schoen, Sergey Konyagin,
 and Misha Rudnev for fruitful discussions and explanations.

\section{Definitions}
\label{sec:definitions}

Let $\Gr$ be an abelian group.
If $\Gr$ is finite then denote by $N$ the cardinality of $\Gr$.
We define two types of convolutions on $\Gr$
$$
    (f*g) (x) := \sum_{y\in \Gr} f(y) g(x-y) \quad \mbox{ and } \quad (f\circ g) (x) := \sum_{y\in \Gr} f(y) g(y+x) =
        (f*g^c) (-x) \,,
$$
where for a function $f:\Gr \to \mathbb{C}$ we put $f^c (x):= f(-x)$.
 Clearly,  $(f*g) (x) = (g*f) (x)$ and $(f\c g)(x) = (g \c f) (-x)$, $x\in \Gr$.
 The $k$--fold convolution, $k\in \N$  we denote by $*_k$,
 so $*_k := *(*_{k-1})$.

We use in the paper  the same letter to denote a set
$S\subseteq \Gr$ and its characteristic function $S:\Gr\rightarrow \{0,1\}.$
Write $\E(A,B)$ for the {\it additive energy} of two sets $A,B \subseteq \Gr$
(see e.g. \cite{tv}), that is
$$
    \E(A,B) = |\{ a_1+b_1 = a_2+b_2 ~:~ a_1,a_2 \in A,\, b_1,b_2 \in B \}| \,.
$$
If $A=B$ we simply write $\E(A)$ instead of $\E(A,A).$
Clearly,
\begin{equation}\label{f:energy_convolution}
    \E(A,B) = \sum_x (A*B) (x)^2 = \sum_x (A \circ B) (x)^2 = \sum_x (A \circ A) (x) (B \circ B) (x)
    \,.
\end{equation}
Let
$$
   \T_k (A) := \sum_x (A*_{k-1} A)^2 (x)
    =
   | \{ a_1 + \dots + a_k = a'_1 + \dots + a'_k  ~:~ a_1, \dots, a_k, a'_1,\dots,a'_k \in A \} | \,.
$$
Let also
$$
    \sigma_k (A) := (A*_{k-1} A)(0)=| \{ a_1 + \dots + a_k = 0 ~:~ a_1, \dots, a_k \in A \} | \,.
$$
Notice that for a symmetric set $A$ that is $A=-A$ one has $\sigma_2 (A) = |A|$ and $\sigma_{2k} (A) = \T_k (A)$.
If $\psi : \Gr \to \C$ is a function then we write
$$
    \sigma_\psi (A) = \sigma(\psi,A) := \sum_{x} \psi (x) (A\c A) (x) \,.
$$
So, if $P\subseteq \Gr$ is another set then put $\sigma_P (A) := \sum_{x\in P} (A\c A) (x)$.
Similarly, write $\E_P (A) := \sum_{x\in P} (A\c A)^2 (x)$.

 For a sequence $s=(s_1,\dots, s_{k-1})$ put
$A^B_s = B \cap (A-s_1)\dots \cap (A-s_{k-1}).$
If $B=A$ then write $A_s$ for $A^A_s$.
Let
\begin{equation}\label{f:E_k_preliminalies}
    \E_k(A)=\sum_{x\in \Gr} (A\c A)(x)^k = \sum_{s_1,\dots,s_{k-1} \in \Gr} |A_s|^2
\end{equation}
and
\begin{equation}\label{f:E_k_preliminalies_B}
\E_k(A,B)=\sum_{x\in \Gr} (A\c A)(x) (B\c B)(x)^{k-1} = \sum_{s_1,\dots,s_{k-1} \in \Gr} |B^A_s|^2
\end{equation}
be the higher energies of $A$ and $B$.
The second formulas in (\ref{f:E_k_preliminalies}), (\ref{f:E_k_preliminalies_B})
can be considered as the definitions of $\E_k(A)$, $\E_k(A,B)$ for non integer $k$, $k\ge 1$.
As above for a set $P\subseteq \Gr$ write $\E^P_k (A) := \sum_{s\in P} |A_s|^k$
and for a set $\mathcal{P} \subseteq \Gr^{k-1}$ put
$$
    \E^{\mathcal{P}}_k (A) := \sum_{(s_1,\dots,s_{k-1}) \in \mathcal{P}} |A_s|^2 \,.
$$


Clearly,
\begin{eqnarray}\label{f:energy-B^k-Delta}
\E_{k+1}(A,B)&=&\sum_x(A\c A)(x)(B\c B)(x)^{k}\nonumber \\
&=&\sum_{x_1,\dots, x_{k-1}}\Big (\sum_y A(y)B(y+x_1)\dots
B(y+x_{k})\Big )^2 =\E(\Delta_k (A),B^{k}) \,,
 \end{eqnarray}
where
$$
    \Delta (A) = \Delta_k (A) := \{ (a,a, \dots, a)\in A^k \}\,.
$$
We also put $\Delta(x) = \Delta (\{ x \})$, $x\in \Gr$.


Quantities $\E_k (A,B)$ can be written in terms of generalized convolutions.

\begin{definition}
   Let $k\ge 2$ be a positive number, and $f_0,\dots,f_{k-1} : \Gr \to \C$ be functions.
Write $F$ for the vector $(f_0,\dots,f_{k-1})$ and $x$ for vector $(x_1,\dots,x_{k-1})$.
Denote by
$${\mathcal C}_k (f_0,\dots,f_{k-1}) (x_1,\dots, x_{k-1})$$
the function
$$
    \Cf_k(F) (x) =  \Cf_k (f_0,\dots,f_{k-1}) (x_1,\dots, x_{k-1}) = \sum_z f_0 (z) f_1 (z+x_1) \dots f_{k-1} (z+x_{k-1}) \,.
$$
Thus, $\Cf_2 (f_1,f_2) (x) = (f_1 \circ f_2) (x)$.
If $f_1=\dots=f_k=f$ then write
$\Cf_k (f) (x_1,\dots, x_{k-1})$ for $\Cf_k (f_1,\dots,f_{k}) (x_1,\dots, x_{k-1})$.
\end{definition}

In particular, $(\Delta_k (B) \c A^k) (x_1,\dots,x_k) = \Cf_{k+1} (B,A,\dots,A) (x_1,\dots,x_k)$, $k\ge 1$.

\bigskip

The following lemma from \cite{s_ineq} contains all basic properties of quantities $\Cf_k (f_0,\dots,f_{k-1})$.

\begin{lemma}
    For any functions the following holds
$$
    \sum_{x_1,\dots, x_{l-1}} \Cf_l (f_0,\dots,f_{l-1}) (x_1,\dots, x_{l-1})\, \Cf_l (g_0,\dots,g_{l-1}) (x_1,\dots, x_{l-1})
        =
$$
\begin{equation}\label{f:scalar_C}
        =
        \sum_z (f_0 \circ g_0) (z) \dots (f_{l-1} \circ g_{l-1}) (z) \quad \quad  \mbox{\bf (scalar product), }
\end{equation}
moreover
$$
    \sum_{x_1,\dots, x_{l-1}} \Cf_l (f_0) (x_1,\dots, x_{l-1}) \dots  \Cf_l (f_{k-1}) (x_1,\dots, x_{l-1})
        =
$$
\begin{equation}\label{f:gen_C}
        =
            \sum_{y_1,\dots,y_{k-1}} \Cf^l_k (f_0,\dots,f_{k-1}) (y_1,\dots,y_{k-1})
                \quad \quad  \mbox{\bf (multi--scalar product), }
\end{equation}
    and
$$
    \sum_{x_1,\dots, x_{l-1}} \Cf_l (f_0) (x_1,\dots, x_{l-1})\, (\Cf_l (f_1) \circ \dots \circ \Cf_l (f_{k-1})) (x_1,\dots, x_{l-1})
        =
$$
\begin{equation}\label{f:conv_C}
        =
            \sum_{z} (f_0 \circ \dots \circ f_{k-1})^l (z)
                \quad \quad  \mbox{\bf (} \sigma_{k} \quad \mbox{\bf for } \quad \Cf_l \mbox{\bf )}  \,.
\end{equation}
\label{l:commutative_C}
\end{lemma}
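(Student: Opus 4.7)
The plan is to establish all three identities by direct expansion of the definition of $\Cf_l$ and Fubini-style rearrangement of the resulting multi-sums. The unifying mechanism is that in
$\Cf_l(f_0,\dots,f_{l-1})(x_1,\dots,x_{l-1}) = \sum_z f_0(z) \prod_{\alpha=1}^{l-1} f_\alpha(z+x_\alpha),$
each coordinate $x_\alpha$ appears in a single factor $f_\alpha(z+x_\alpha)$, so after expansion the $x_\alpha$-sums decouple and can be evaluated one by one.

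For the scalar product identity $(\ref{f:scalar_C})$, I would expand both $\Cf_l$'s, introducing hidden variables $z$ and $w$. Each $x_\alpha$ appears solely in $f_\alpha(z+x_\alpha) g_\alpha(w+x_\alpha)$, and summing it out gives $(f_\alpha \circ g_\alpha)(w-z)$. Setting $t=w-z$, the leftover sum $\sum_z f_0(z) g_0(z+t)$ becomes $(f_0 \circ g_0)(t)$, producing the right-hand side. Identity $(\ref{f:gen_C})$ is handled similarly: expand each $\Cf_l(f_j)$ with its own private hidden variable $z_j$. Each $x_\alpha$-sum becomes $\sum_{u} \prod_j f_j(u+(z_j-z_0))$, which after substituting $y_j = z_j - z_0$ equals $\Cf_k(f_0,\dots,f_{k-1})(y_1,\dots,y_{k-1})$. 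There are $l-1$ such sums over the coordinates $x_\alpha$; together with the outer $z_0$-sum, which contributes one further factor of $\Cf_k(F)(\mathbf{y})$, one obtains $\Cf_k(F)(\mathbf{y})^l$ summed over $\mathbf{y}$.

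The third identity $(\ref{f:conv_C})$ follows the same template but with more delicate bookkeeping. I would first unfold the iterated $\circ$-convolution $(\Cf_l(f_1) \circ \cdots \circ \Cf_l(f_{k-1}))(\mathbf{x})$, under the paper's fixed associativity convention, into a chain-structured multi-sum involving auxiliary vectors $\mathbf{y}_1,\dots,\mathbf{y}_{k-2}$. Once every $\Cf_l$ is expanded with its own hidden variable, the $l-1$ coordinates of $\mathbf{x}$ and of each $\mathbf{y}_j$ decouple, and the sum factors into $l-1$ identical contributions plus one from the outermost hidden variables. Each contribution, after a linear change of variables on the hidden variables, reduces to the $k$-fold $\circ$-product $(f_0 \circ \cdots \circ f_{k-1})(z)$ for an appropriate variable $z$; summing over $z$ then yields the $l$-th power integral.

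The main technical obstacle is in this last step: one must verify that the chain of shifts induced by the $\circ$'s between the $\Cf_l(f_i)$'s aligns, after the substitution, with the chain defining the ordinary $\circ$-product of the $f_i$'s. This requires careful tracking of signs, of the bracketing convention used for the iterated $\circ$, and of the fact that $(\Phi \circ \Psi)(x) = (\Psi \circ \Phi)(-x)$ costs us nothing once we sum against $\Cf_l(f_0)$. Modulo this bookkeeping, the argument is entirely formal.
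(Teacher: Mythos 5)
The paper cites Lemma~\ref{l:commutative_C} from \cite{s_ineq} without reproving it, so there is no in-paper proof to compare against; your expand-and-interchange argument is the natural route and is very likely what the cited source does. Your treatments of (\ref{f:scalar_C}) and (\ref{f:gen_C}) are correct as written. For (\ref{f:conv_C}) your strategy is right and the bookkeeping you flag does close: once each $\Cf_l(f_j)$ is given its own hidden variable $z_j$, the inner multi-sum factors over the $l-1$ free coordinates, each factor reduces under the substitutions $u_0 = z_0 + x_\alpha$, $u_j = z_j + y^{(j)}_\alpha$ to the $k$-fold $\circ$-chain evaluated at the constant $c = z_{k-1} - z_0 - \dots - z_{k-2}$, and the leftover sum over $z_0,\dots,z_{k-1}$ produces the $l$-th copy of the same chain. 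The one point you should state rather than wave at is the bracketing: $\circ$ is genuinely non-associative (already $((f\circ g)\circ h)(x)=\sum_{y,w}f(y)g(w)h(w-y+x)$ while $(f\circ(g\circ h))(x)=\sum_{y,w}f(y)g(w)h(w+y+x)$), and your change of variables produces precisely the right-associated form $(f_0\circ\dots\circ f_{k-1})(x)=\sum_{y_0,\dots,y_{k-2}}f_0(y_0)\cdots f_{k-2}(y_{k-2})\,f_{k-1}(y_0+\dots+y_{k-2}+x)$, so the identity holds when both iterated $\circ$'s are read with that same convention. Your parenthetical appeal to $(\Phi\circ\Psi)(x)=(\Psi\circ\Phi)(-x)$ is a red herring here: the obstruction you need to rule out is non-associativity, not non-commutativity. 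With the convention made explicit, the argument is complete.
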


Generalizing the notion of $\sigma_P (A)$, $P\subseteq \Gr$ we define
for a set $\mathcal{P} \subseteq \Gr^{k-1}$, $k\ge 2$ the quantity
$$
    \sigma_{\mathcal{P}} (A) := \sum_{(s_1,\dots,s_{k-1}) \in \mathcal{P}} \Cf_k (A) (s_1,\dots,s_{k-1}) \,.
$$

\bigskip

Let $f_1,\dots,f_t : \Gr \to \C$ be functions.
Tensor power of the functions is defined as
$$(f_1 \otimes f_2 \otimes \dots \otimes f_t) (x_1,\dots,x_t) = f_1 (x_1) f_2 (x_2) \dots f_t (x_t) \,.$$
Tensor power of a single function $f$ is denoted by
$(f^\otimes) (x_1,\dots,x_t) = \prod_{j=1}^t f(x_j)$.
So, with some abuse of the notation we
do not write
the number $t$ in the definition of tensor power.
It is easy to see that
\begin{equation}\label{f:tensor_convolutions}
    (g \c f)^\otimes = (g^\otimes \c f^\otimes)
        \quad \mbox{ and } \quad
            (g * f)^\otimes = (g^\otimes * f^\otimes)
\end{equation}
and moreover
\begin{equation}\label{f:tensor_convolutions_C_k}
    \Cf_{k} (f^\otimes_0, \dots, f^\otimes_{k-1}) = \Cf^\otimes_k (f_0,\dots,f_{k-1}) \,.
\end{equation}

\bigskip

For a positive integer $n,$ we set $[n]=\{1,\ldots,n\}$.
All logarithms used in the paper are to base $2.$
By  $\ll$ and $\gg$ we denote the usual Vinogradov's symbols.
We write $\ll_M$ and $\gg_M$ if there is a dependence on a constant $M$.


\section{Preliminaries}
\label{sec:preliminaries}

At the beginning of the section we collect some results
about
matrices.
We need in a lemma on singular decomposition (see e.g. \cite{ss_E_k}).

\begin{lemma}
    Let $n,m$ be two positive integers, $n\le m$, and let $X,Y$ be  sets of cardinalities $n$ and $m$,
    respectively.
    Let also $\M=\M(x,y)\,$, $x\in X$, $y\in Y$  be $n\m m$
    complex (real) matrix.
    Then there are complex (real) functions $u_j$ defined on $X$,
    $v_j$ defined on $Y$, and non--negative numbers $\la_j$
    such that
    \begin{equation}\label{f:M_singular_decomposition_basic}
        \M (x,y) = \sum_{j=1}^n \la_j u_j (x) \ov{v_j (y)} \,,
    \end{equation}
    where $\{ u_j \}$, $j\in [n]$,  and $\{ v_j \}$, $j\in [n]$ form two orthonormal sequences,
    and
    \begin{equation}\label{}
        \la_1 = \max_{w\neq 0} \frac{\| \M w\|_2}{\|w\|_2}\,, \quad \la_2 = \max_{w\neq 0,\, w\perp u_1} \frac{\| \M w\|_2}{\|w\|_2}
            \,, \dots \,, \la_n = \max_{w\neq 0,\, w\perp u_1,\, \dots,\, w\perp u_{n-1}} \frac{\| \M w\|_2}{\|w\|_2} \,.
    \end{equation}
    Further\\
    $\bullet$ $\M u_j = \la_j v_j$, $j\in [n]$. \\
    $\bullet$ The numbers $\la^2_j$ and the vectors $u_j$ are all eigenvalues and eigenvectors of the matrix $\M^* \M$. \\
    $\bullet$ The numbers $\la^2_j$ and the vectors $v_j$ form $n$ eigenvalues and eigenvectors of the matrix $\M \M^*$.
        Another $(m-n)$ eigenvalues of $\M\M^*$ equal zero. \\
    $\bullet$ We have $\sum_{j=1}^n \la^2_j = \sum_{x,y} |\M^2 (x,y)|$, and
                \begin{equation}\label{f:ractangular_norm_of_M}
                    \sum_{j=1}^n \la^4_j = \sum_{x,x'} \Big| \sum_y \M(x,y) \ov{\M(x',y)} \Big|^2 \,.
                \end{equation}
\label{l:singular_decomposition}
\end{lemma}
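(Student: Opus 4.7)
The plan is a routine derivation of the singular value decomposition, with the spectral theorem as the sole substantive input. First I would apply the spectral theorem to the $n\times n$ Hermitian positive semi-definite matrix $\M^* \M$, obtaining an orthonormal basis $u_1, \dots, u_n$ of eigenvectors on $X$ with real eigenvalues, which I arrange in decreasing order as $\la_1^2 \ge \la_2^2 \ge \dots \ge \la_n^2 \ge 0$; non-negativity is automatic from $\langle \M^* \M u, u \rangle = \|\M u\|_2^2 \ge 0$. This already establishes the second bullet.

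Next I would define the partner vectors $v_j$ on $Y$ by $v_j := \la_j^{-1} \M u_j$ whenever $\la_j > 0$, and complete the family arbitrarily to an orthonormal sequence in $\C^Y$ when $\la_j = 0$. Orthonormality of $\{v_j\}$ is the one-line identity $\langle v_j, v_k \rangle = (\la_j \la_k)^{-1} \langle \M u_j, \M u_k \rangle = (\la_j \la_k)^{-1} \langle \M^* \M u_j, u_k \rangle = \delta_{jk}$, and $\M u_j = \la_j v_j$ is built into the construction. Multiplying $\M^* \M u_j = \la_j^2 u_j$ on the left by $\M$ yields $\M \M^* v_j = \la_j^2 v_j$, so the $v_j$ furnish $n$ eigenvalues of the $m\times m$ matrix $\M \M^*$; since its rank is at most $n$, the remaining $m-n$ eigenvalues must vanish, establishing the third bullet. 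The outer-product decomposition (\ref{f:M_singular_decomposition_basic}) then follows from the standard fact that a linear operator between finite-dimensional inner product spaces is determined by its action on any orthonormal basis: both $\M$ and the operator associated with the right-hand side of (\ref{f:M_singular_decomposition_basic}) send $u_k \mapsto \la_k v_k$, so they agree.

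For the variational formulas I would appeal to the Courant--Fischer / Rayleigh--Ritz principle applied to $\M^* \M$. The identity $\|\M w\|_2^2 / \|w\|_2^2 = \langle \M^* \M w, w\rangle / \|w\|_2^2$ shows that the unconstrained maximum is $\la_1^2$, and restricting to $w \perp u_1, \dots, u_{j-1}$ reduces the problem to the invariant complementary subspace, whose top eigenvalue is $\la_j^2$. Both trace identities are then immediate: $\sum_j \la_j^2 = \tr(\M^* \M) = \sum_{x,y} |\M(x,y)|^2$, and by Hermiticity of $\M \M^*$, $\sum_j \la_j^4 = \tr((\M \M^*)^2) = \sum_{x,x'} |(\M \M^*)(x,x')|^2$, which upon expanding $(\M \M^*)(x,x') = \sum_y \M(x,y) \ov{\M(x',y)}$ becomes exactly (\ref{f:ractangular_norm_of_M}). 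There is no substantive obstacle; the only point needing care is the degenerate case $\la_j = 0$, where $v_j$ cannot be defined as $\la_j^{-1} \M u_j$ and must instead be chosen as an arbitrary orthonormal extension, observing that such indices contribute zero to (\ref{f:M_singular_decomposition_basic}).
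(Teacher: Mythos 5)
The paper does not actually prove this lemma --- it is recalled as a standard fact (the singular value decomposition) with a pointer to the literature (``see e.g.\ \cite{ss_E_k}''), so there is no internal argument to compare against. Your derivation from the spectral theorem applied to $\M^*\M$ is the standard textbook proof and is correct. The only place worth tightening is the degenerate indices: invoking $\mathrm{rank}(\M\M^*)\le n$ shows that $\M\M^*$ has at least $m-n$ zero eigenvalues, but by itself it does not show that each $v_j$ with $\la_j=0$ is an eigenvector. For that, note that the $v_j$ with $\la_j>0$ span $\mathrm{range}(\M)$, so completing to an orthonormal family automatically places the remaining $v_j$ in $(\mathrm{range}\,\M)^\perp=\ker\M^*$, giving $\M\M^*v_j=0$ directly; the same observation also completes the orthonormality check for pairs involving a zero singular value.
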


We will call functions $\{ u_j \}$, $j\in [n]$,  and $\{ v_j \}$, $j\in [n]$
as singularfunctions.

\bigskip

Now we recall the well--known theorem of Perron--Frobenius about
the dominate eigenvalue
and correspondent nonnegative eigenvector of nonnegative matrices (see e.g. \cite{Horn-Johnson}, chapter 8).
By $\rho (M)$ denote the spectral radius of a square matrix $M$.

\begin{theorem}
    Let $M$ be a real
    square
    matrix  with nonnegative entries.
    Then
    eigenvalue $\rho(M)$
    corresponds to a
    nonnegative eigenvector.
    Conversely, if $M$ has a strictly positive eigenvector then this eigenvector corresponds to $\rho(M)$.
\label{t:Perron-Frobenius}
\end{theorem}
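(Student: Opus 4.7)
The plan is to establish existence of a nonnegative eigenvector for $\rho(M)$ by a perturbation argument that reduces to the strictly positive case (handled via Brouwer's fixed point theorem), and then to deduce the converse direction by pairing with a left eigenvector of $M^\top$. I would start by setting $M_\eps = M + \eps J$, where $J$ is the all--ones matrix and $\eps > 0$. Because the spectrum of a matrix depends continuously on its entries and the standard simplex is compact, a normalized nonnegative eigenvector $v_\eps$ for $\rho(M_\eps)$ admits a convergent subsequence whose limit is a nonnegative eigenvector of $M$ with eigenvalue $\rho(M)$. It therefore suffices to prove the statement when $M$ is strictly positive.

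For such $M$, the map $T\colon \Delta \to \Delta$ defined on the simplex $\Delta = \{x \in \R^n : x \gs 0,\ \sum_i x_i = 1\}$ by $T(x) = Mx/\|Mx\|_1$ is continuous and well--defined, since $Mx$ is strictly positive whenever $x \in \Delta$. Brouwer's theorem yields $v \in \Delta$ with $Mv = \la v$ for $\la = \|Mv\|_1 > 0$, and $v = Mv/\la$ is then strictly positive. To identify $\la$ with $\rho(M)$, I would run the same Brouwer argument on $M^\top$ to obtain a strictly positive left eigenvector $u$ with $M^\top u = \nu u$; pairing against $v$ via $\la \langle v,u\rangle = \langle Mv,u\rangle = \langle v, M^\top u\rangle = \nu \langle v,u\rangle$ forces $\nu = \la$. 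Then for any eigenpair $(\mu, w)$ of $M$, the triangle inequality gives $M|w| \gs |\mu|\,|w|$ componentwise, and testing against $u$ yields $\la\, u^\top |w| \gs |\mu|\, u^\top |w|$, hence $|\mu| \le \la$; since $\la$ is itself an eigenvalue, $\la = \rho(M)$.

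For the converse, given $Mv = \la v$ with $v$ strictly positive, I would apply the existence part to $M^\top$ (which has nonnegative entries and the same spectral radius) to obtain a nonzero nonnegative $u$ with $M^\top u = \rho(M) u$, and compute
$$
    \la \langle v, u \rangle = \langle Mv, u \rangle = \langle v, M^\top u \rangle = \rho(M) \langle v, u \rangle.
$$
Strict positivity of $v$ together with $u \gs 0$, $u \neq 0$ give $\langle v, u\rangle > 0$, whence $\la = \rho(M)$. The main obstacle is the identification $\la = \rho(M_\eps)$ in the Brouwer step, which forces one to run the fixed--point argument a second time on $M^\top_\eps$ just to locate the spectral radius among the positive eigenvalues; a cleaner but comparably involved alternative would be to prove the Collatz--Wielandt formula $\rho(M) = \sup_{x \gs 0,\, x\neq 0} \min_{i:\, x_i > 0} (Mx)_i/x_i$ and then attain the supremum directly on the simplex.
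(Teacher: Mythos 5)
Your proof is correct. Worth noting up front: the paper does not actually prove Theorem \ref{t:Perron-Frobenius} --- it recalls it as a known fact and cites \cite{Horn-Johnson}, Chapter 8, so there is no in-paper argument to compare against; what you have done is supply a self-contained proof of that cited result.

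Your argument is a clean and standard route: Brouwer's fixed point theorem on the normalized map $T(x)=Mx/\|Mx\|_1$ over the simplex handles the strictly positive case; the pairing with a left Perron vector $u$ of $M^\top$ (obtained by running the same Brouwer argument on $M^\top$), combined with the componentwise inequality $M|w|\gs |\mu|\,|w|$ for an arbitrary eigenpair $(\mu,w)$, identifies the Brouwer eigenvalue with $\rho(M)$; the perturbation $M_\eps=M+\eps J$ followed by compactness of $\Delta$ and continuity of the spectrum passes to general nonnegative $M$; and the converse falls out of the same duality pairing $\la\langle v,u\rangle = \rho(M)\langle v,u\rangle$ with $\langle v,u\rangle>0$. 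All steps check out --- in particular, the triangle-inequality bound, the positivity of $\langle v,u\rangle$ in both the identification and the converse, and the limit $Mv=\lim \rho(M_{\eps_k})v_{\eps_k}=\rho(M)v$ with $v\in\Delta$ nonzero. Horn--Johnson's own treatment proceeds via the Collatz--Wielandt variational formula $\rho(M)=\sup_{x\ge 0,\,x\ne 0}\min_{i:\,x_i>0}(Mx)_i/x_i$ and degree/compactness arguments rather than invoking Brouwer explicitly, which you correctly flag as an equivalent alternative; the Brouwer route is arguably more self-contained for a reader not wanting to develop the Collatz--Wielandt machinery, at the cost of having to run the fixed-point argument twice (once for $M$, once for $M^\top$) to pin down the spectral radius.
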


\bigskip

Also we need in a particular convex property of eigenvalues (see e.g. \cite{Horn-Johnson}).

\begin{lemma}
    Let $M$ be a normal $(n\times n)$ matrix
    with eigenvalues $\mu_1,\dots,\mu_n$
    and $f$ be a convex function on $n$ complex variables.
    Then
$$
    \max_{ x_1, \dots, x_n } f( \langle M x_1, x_1 \rangle, \dots, \langle M x_n, x_n \rangle)
        =
            \max_{i_1,\dots, i_n} f(\mu_{i_1},\dots,\mu_{i_n}) \,,
$$
where the left maximum is taken over all systems of orthonormal vectors $x_1,\dots,x_n$,
and the right maximum is taken over all permutations of $\{1,2,\dots,n\}$.
\label{l:convex_eigenvalues}
\end{lemma}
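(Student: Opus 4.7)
The plan is to reduce the statement to Birkhoff's theorem on doubly stochastic matrices, exploiting the fact that a normal matrix admits a full orthonormal eigenbasis.

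Since $M$ is normal, by the spectral theorem pick an orthonormal basis $e_1,\dots,e_n$ of $\C^n$ with $Me_k=\mu_k e_k$. Given any orthonormal system $x_1,\dots,x_n$ in $\C^n$ (which, having $n$ vectors, is in fact an orthonormal basis), expand $x_j=\sum_{k=1}^n c_{jk}e_k$. Then
\[
\langle Mx_j,x_j\rangle=\sum_{k=1}^n \mu_k |c_{jk}|^2.
\]
The matrix $D:=(|c_{jk}|^2)_{j,k=1}^n$ is doubly stochastic: each row sums to $\|x_j\|^2=1$, and each column sums to $\sum_j|\langle x_j,e_k\rangle|^2=\|e_k\|^2=1$ by Parseval applied to the basis $\{x_j\}$.

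By Birkhoff's theorem, $D$ is a convex combination of permutation matrices: there exist nonnegative $\alpha_\pi$ with $\sum_\pi\alpha_\pi=1$ such that $|c_{jk}|^2=\sum_\pi\alpha_\pi\,\mathbf{1}[\pi(j)=k]$. Substituting,
\[
\langle Mx_j,x_j\rangle=\sum_\pi \alpha_\pi\,\mu_{\pi(j)},
\]
so the point $\bigl(\langle Mx_1,x_1\rangle,\dots,\langle Mx_n,x_n\rangle\bigr)$ lies in the convex hull of the points $\bigl(\mu_{\pi(1)},\dots,\mu_{\pi(n)}\bigr)$, $\pi\in S_n$. Applying convexity of $f$,
\[
f\bigl(\langle Mx_1,x_1\rangle,\dots,\langle Mx_n,x_n\rangle\bigr)\le \sum_\pi \alpha_\pi\, f(\mu_{\pi(1)},\dots,\mu_{\pi(n)})\le \max_\pi f(\mu_{\pi(1)},\dots,\mu_{\pi(n)}).
\]
Taking the supremum over orthonormal systems gives one inequality. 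For the reverse, fix a maximizing permutation $\pi$ and set $x_j:=e_{\pi(j)}$; then $\langle Mx_j,x_j\rangle=\mu_{\pi(j)}$, so the maximum is attained.

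The only subtle step is verifying that $D$ is doubly stochastic, which is what forces the hypothesis that $M$ be normal (so that the eigenvectors form an orthonormal basis and Parseval applies in the column direction). Once that is in hand, Birkhoff and convexity finish the argument; the result is essentially a refined version of the Schur--Horn observation that the diagonal entries of a normal matrix in an arbitrary orthonormal basis are majorized by its eigenvalues.
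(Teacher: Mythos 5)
The paper does not actually give a proof of this lemma; it simply cites Horn and Johnson. Your argument — spectral theorem for the normal matrix, the observation that $(|c_{jk}|^2)$ is doubly stochastic, Birkhoff's decomposition into permutation matrices, convexity of $f$, and the explicit choice $x_j = e_{\pi(j)}$ for the reverse inequality — is correct, complete, and is essentially the standard proof (it is the same mechanism underlying the Schur--Horn majorization result), so it matches what the cited reference would supply.
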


\bigskip

Now recall some combinatorial results.

The
first
lemma is a special case of  Lemma 2.8 from \cite{sv}.

\begin{lemma}\label{l:E_k-identity}
 Let $A$ be a subset of an abelian group.
Then for every $k,l\in \N$
$$\sum_{s,t:\atop \|s\|=k-1,\, \|t\|=l-1} \E(A_s,A_t)=\E_{k+l}(A) \,,$$
where $\|x\|$ denote the number of components of vector $x$.
\end{lemma}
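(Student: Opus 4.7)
The plan is to expand $\E(A_s,A_t)$ using the convolution identity (\ref{f:energy_convolution}), namely
\[
\E(A_s,A_t)=\sum_x (A_s\circ A_s)(x)\,(A_t\circ A_t)(x),
\]
and then to swap the order of summation so that the $s$-sum and the $t$-sum separate:
\[
\sum_{s,t}\E(A_s,A_t)=\sum_x \Bigl(\sum_{s:\|s\|=k-1}(A_s\circ A_s)(x)\Bigr)\Bigl(\sum_{t:\|t\|=l-1}(A_t\circ A_t)(x)\Bigr).
\]

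The core of the proof is the identity
\[
\sum_{s_1,\dots,s_{k-1}}(A_s\circ A_s)(x)=(A\circ A)(x)^{k}.
\]
I would establish it by writing the indicator of $A_s=A\cap(A-s_1)\cap\dots\cap(A-s_{k-1})$ as the product $A_s(y)=A(y)\prod_{i=1}^{k-1}A(y+s_i)$, which gives
\[
(A_s\circ A_s)(x)=\sum_y A(y)A(y+x)\prod_{i=1}^{k-1}A(y+s_i)A(y+x+s_i).
\]
After summing over the $s_i$ and interchanging sums, each inner sum $\sum_{s_i}A(y+s_i)A(y+x+s_i)$ is, by translation invariance, independent of $y$ and equals $(A\circ A)(x)$. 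This produces $k-1$ such factors plus the factor $\sum_y A(y)A(y+x)=(A\circ A)(x)$ from the original term, giving $(A\circ A)(x)^k$.

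Plugging this and its analogue for $t$ back in, we obtain
\[
\sum_{s,t}\E(A_s,A_t)=\sum_x (A\circ A)(x)^{k}(A\circ A)(x)^{l}=\sum_x (A\circ A)(x)^{k+l}=\E_{k+l}(A),
\]
which is the desired identity by the definition (\ref{f:E_k_preliminalies}). There is no genuine obstacle here; the argument is essentially bookkeeping, and the only slightly delicate point is making sure the factorization of the $s$-sum is valid. This is clean because once $y$ and $x$ are fixed, the $s_i$'s enter the indicator $A_s$ through independent factors, so Fubini produces exactly the power $(A\circ A)(x)^{k-1}$ as claimed.
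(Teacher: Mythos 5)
Your proof is correct. Note that the paper itself does not supply a proof of this lemma; it simply cites it as a special case of Lemma 2.8 in \cite{sv}, so there is no in-text argument to compare against. Your route is the natural one: the only substantive step is the identity $\sum_{s_1,\dots,s_{k-1}}(A_s\circ A_s)(x)=(A\circ A)(x)^k$, which you establish correctly by factoring the indicator of $A_s$, interchanging sums (legitimate since all terms are nonnegative and finite for finite $A$), and using that each inner sum $\sum_{s_i}A(y+s_i)A(y+x+s_i)=(A\circ A)(x)$ by the change of variable $u=y+s_i$. Combining this with the convolution identity (\ref{f:energy_convolution}) and the definition (\ref{f:E_k_preliminalies}) gives the claim. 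The same identity is in fact already recorded in the paper as equation (\ref{f:E_k_preliminalies}) (the second equality there with $k$ replaced by $k$ or $l$), so one could shorten the write-up by invoking that display directly rather than re-deriving it.
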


\bigskip

We need in the Balog--Szemer\'{e}di--Gowers theorem,
see \cite{tv} section 2.5 and also \cite{schoen_BSzG}.


\begin{theorem}
\label{t:BSzG}
Let $\a \in (0,1]$ be a real number,
$A$ and $B$ be finite sets of an abelian group, and $|A|\gs |B|$.
If $\E(A,B)=\a |A|^3,$ then there exist sets $A'\subseteq A$ and
$B'\subseteq B$ such that $|A'|\gg \a |A|$, $|B'| \gg \a |B|$ and
$$
    |A'+B'|\ll \a^{-5}|A| \,.
$$
\end{theorem}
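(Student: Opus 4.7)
I would follow the classical graph-theoretic argument (see \cite{tv}, Section 2.5, or \cite{schoen_BSzG}), which splits into three stages that I would carry out in order. First, pass from the energy bound to a popular-sum bipartite graph: writing $r(x)=|\{(a,b)\in A\times B : a+b=x\}|$ so that $\sum_x r(x)^2=\a|A|^3$, a dyadic pigeonhole isolates a set $P\subseteq A+B$ of popular sums, on which $r(x)\gg \a|A|$ and $\sum_{x\in P} r(x)\gg \a|A||B|$; in particular $|P|\ll \a^{-1}|B|$. Define the bipartite graph $G\subseteq A\times B$ by placing an edge between $a$ and $b$ iff $a+b\in P$; then $G$ has edge density $\gg \a$.

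The main step is the Balog--Szemer\'edi graph lemma. Removing low-degree vertices and iterating Cauchy--Schwarz twice (first producing many pairs sharing many common neighbours, then upgrading to many length-three paths between such pairs) yields subsets $A'\subseteq A$, $B'\subseteq B$ with $|A'|\gg \a|A|$ and $|B'|\gg \a|B|$, such that every pair $(a,b)\in A'\times B'$ is joined by at least $K\gg \a^{O(1)}|A||B|$ paths of length three in $G$. This is the heart of the proof and the main technical obstacle, since the exact exponent of $\a$ extracted here governs the final sumset exponent; the careful optimisation of thresholds and of the two Cauchy--Schwarz applications (as in the sharpened version due to Schoen \cite{schoen_BSzG}) is needed to obtain the exponent $5$ rather than a worse polynomial.

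Finally, collapse the 3-paths to representations by popular sums. For any $x\in A'+B'$ choose a pair $(a,b)\in A'\times B'$ with $a+b=x$; each of its $K$ paths $a\sim b_1\sim a_1\sim b$ in $G$ gives
$$x \;=\; (a+b_1)-(a_1+b_1)+(a_1+b) \;\in\; P-P+P,$$
with all three summands in $P$. With $(a,b)$ fixed, distinct paths produce distinct triples $(p_1,p_2,p_3)\in P^3$, because the path is recoverable from $(p_1,p_2,p_3)$ via $b_1=p_1-a$ and $a_1=p_3-b$. Hence
$$|A'+B'|\cdot K \;\leq\; |P|^3 \;\ll\; \a^{-3}|B|^3,$$
and tracking the exponent of $\a$ in $K$ (together with the automatic consequence $|A|\leq \a^{-1/2}|B|$ coming from $\E(A,B)\leq |A||B|^2$) yields the claimed bound $|A'+B'|\ll \a^{-5}|A|$. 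An alternative route, avoiding delicate bookkeeping at the last step, is to first use the many paths to deduce that $\E(A',B')$ is large and then apply a Pl\"unnecke--Ruzsa clean-up on a suitable subset of $A'\cup B'$.
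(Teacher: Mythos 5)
The paper does not prove Theorem~\ref{t:BSzG} at all: it is quoted as a known result, with pointers to \cite{tv}, Section 2.5, and to \cite{schoen_BSzG}, which is where the graph-theoretic argument you outline actually lives. So there is no in-paper proof to compare your sketch against, and I will assess it on its own terms.

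The skeleton is correct and is the standard Balog--Szemer\'edi--Gowers argument: popular sums $P$ with $r(x)\gg\a|A|$ on $P$ and $|P|\ll\a^{-1}|B|$; the dense popular bipartite graph on $A\times B$; a path lemma producing $A'\subseteq A$, $B'\subseteq B$ of the right sizes with many length-$3$ paths; and the injective map from paths to triples in $P^3$, which is argued correctly. However, the step you yourself flag as ``the heart and the main technical obstacle'' is exactly where the theorem lives, and as written it is not established. Running the arithmetic on the finishing step: if the path lemma yields $K\gg\a^{c}|A||B|$ paths between every pair in $A'\times B'$, then
$$
|A'+B'|\ \le\ \frac{|P|^3}{K}\ \ll\ \frac{\a^{-3}|B|^2}{\a^{c}|A|}\ \le\ \a^{-3-c}|A|
$$
using only $|B|\le|A|$, so matching the target $\a^{-5}|A|$ requires $c\le 2$. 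A naive two-step Cauchy--Schwarz graph lemma gives an exponent nearer to $4$ or $5$, which does \emph{not} suffice by this counting, so Schoen's sharpened lemma (or the Pl\"unnecke--Ruzsa finish you mention as an alternative) is genuinely required and cannot be waved at. Also, the side observation $|A|\le\a^{-1/2}|B|$ cannot rescue a weaker path count at the last step: it bounds $|B|^2/|A|$ from \emph{below}, not above, so it goes the wrong way. In short, the sketch is a faithful summary of the cited proof's structure, but the exponent $5$ is not derived from what is written, and the claim that the bookkeeping ``yields the claimed bound'' is precisely the unproved part.
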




\bigskip

Recall that a set $A=\{a_1,\dots, a_n\} \subseteq \R$ is called {\it convex}
if $a_i-a_{i-1}<a_{i+1}-a_i$ for every $2\le i\le n-1.$
We need in a lemma, see e.g. \cite{ss2}, \cite{ikrt} or \cite{Li}.

\begin{lemma}
    Let $A$ be a convex set, $A'\subseteq A$, and $B$ be an arbitrary set.
    Then
    \begin{equation}\label{f:convex_A'}
        |A'+B|\gg |A'|^{3/2}|B|^{1/2}|A|^{-1/2} \,.
    \end{equation}
    Arranging $(A*_{k-1} A) (x_1) \ge (A*_{k-1} A) (x_2) \ge \dots $, we have
    \begin{equation}\label{f:E_3_gen_2-}
        (A*_{k-1} A) (x_j) \ll_k |A|^{k-\frac{4}{3} (1-2^{-k})} j^{-\frac{1}{3}} \,.
    \end{equation}
    In particular
    $$
        \E_3 (A) \ll |A|^3 \log |A| \,,
    $$
    and
    $$
        \E(A,B) \ll |A| |B|^{\frac{3}{2}} \,.
    $$
\label{l:E_3_convex}
\end{lemma}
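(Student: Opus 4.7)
All four estimates rest on Szemer\'edi--Trotter--style incidence geometry exploiting the convexity of $A=\{a_1<\cdots<a_n\}$. The structural fact I will use throughout is that the index map $f:[n]\to\R$, $f(i)=a_i$, is strictly convex; this permits one to build families of curves to which the Szemer\'edi--Trotter or Pach--Sharir bound $I(P,\Gamma)\ll (|P||\Gamma|)^{2/3}+|P|+|\Gamma|$ applies nontrivially, once one has verified the pseudoline property for each configuration.

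For the sumset bound (\ref{f:convex_A'}), my plan follows an Elekes--Nathanson--Ruzsa--style incidence argument: one sets up an incidence between a point set of size $\asymp|B||A|$ and a family of $|A'+B|$ concave curves derived from the graph of $f^{-1}$, extracting $|A'||B|$ incidences from the construction (each pair $(a',b)\in A'\times B$ contributes one incidence to the curve indexed by $a'+b$). Szemer\'edi--Trotter for pseudolines then forces $|A'+B|^2\gg |A'|^3|B|/|A|$ in the regime where the first term dominates, which rearranges to~(\ref{f:convex_A'}).

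For the pointwise estimate (\ref{f:E_3_gen_2-}), I induct on $k$. The base case $k=2$ reduces to the level-set bound $|\{x:(A\circ A)(x)\ge t\}|\ll |A|^3/t^3$, proved by an analogous Szemer\'edi--Trotter configuration tailored to counting difference representations; rearranging gives $(A\circ A)(x_j)\ll |A|/j^{1/3}$, matching $\alpha_2=2-\tfrac{4}{3}(1-2^{-2})=1$. For the inductive step I apply Cauchy--Schwarz pointwise to $(A*_k A)(x)=\sum_a A(a)(A*_{k-1}A)(x-a)$, obtaining $(A*_k A)(x)^2\le |A|\cdot((A*_{k-1}A)^2*A)(x)$, and combine this with a layer-cake estimate on the ordered rearrangement of $A*_{k-1}A$ supplied by the inductive hypothesis; tracking the exponents yields the recurrence $\alpha_{k+1}=\alpha_k+1-\tfrac{2}{3}\cdot 2^{-k}$, which telescopes to $\alpha_k=k-\tfrac{4}{3}(1-2^{-k})$.

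The remaining two assertions follow by summation. For $\E_3(A)\ll |A|^3\log|A|$, sum the $k=2$ pointwise bound: $\E_3(A)=\sum_j(A\circ A)(x_j)^3\ll |A|^3\sum_{j\le |A-A|}j^{-1}\ll |A|^3\log|A|$. For $\E(A,B)\ll |A||B|^{3/2}$, write $\E(A,B)=\sum_d(A\circ A)(d)(B\circ B)(d)$, decompose dyadically along the levels $(A\circ A)(d)\asymp 2^k$ using the level-set bound, and apply Cauchy--Schwarz against $\sum_d (B\circ B)(d)^2=\E(B)\le |B|^3$ on the $B$-side; the resulting geometric series in $k$ sums to the claim. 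The main obstacle throughout is verifying the pseudoline property and matching the exponents in each Szemer\'edi--Trotter application --- in particular the base-case level-set bound and the inductive Cauchy--Schwarz step, which together underpin all the remaining estimates.
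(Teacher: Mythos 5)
The paper does not prove Lemma~\ref{l:E_3_convex}; it is quoted with references to \cite{ss2}, \cite{ikrt}, \cite{Li}, so there is no in-paper argument to compare against. Your sketch for the sumset inequality (\ref{f:convex_A'}) (Elekes-type pseudoline incidences) and for $\E_3(A)\ll|A|^3\log|A|$ (cubing and summing the $k=2$ rearrangement bound, which yields a harmonic series) are consistent with the cited sources. Two of the remaining steps, however, do not go through as written.

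For $\E(A,B)\ll|A||B|^{3/2}$: decomposing along levels of $A\c A$ and applying Cauchy--Schwarz against $\E(B)\le|B|^3$ gives, on the level $L_k=\{d:(A\c A)(d)\asymp 2^k\}$ with $|L_k|\ll|A|^3 2^{-3k}$, a contribution $\ll 2^k|L_k|^{1/2}\E(B)^{1/2}\ll|A|^{3/2}|B|^{3/2}2^{-k/2}$; summing over $k$ you get $|A|^{3/2}|B|^{3/2}$, off from the claim by a factor $|A|^{1/2}$. (Replacing Cauchy--Schwarz by the rearrangement inequality $\sum_d (A\c A)(d)(B\c B)(d)\le\sum_j(A\c A)^*_j(B\c B)^*_j$ with $(B\c B)^*_j\le\min(|B|,|B|^2/j)$ gives $|A||B|^{5/3}$, still too weak.) The correct route is a Szemer\'edi--Trotter count directly on the mixed correlation: one proves $|\{x:(A\c B)(x)\ge t\}|\ll|A||B|^2/t^3$ by the same incidence construction as for $A\c A$ but with curves indexed by $B$, combines it with the trivial bound $\le|A||B|/t$, and integrates with crossover $t\asymp|B|^{1/2}$ to obtain $|A||B|^{3/2}$. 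This is a separate incidence input; it does not follow from the $A\c A$ level-set bound combined with an $L^2$ estimate on the $B$ side.

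For the inductive step in (\ref{f:E_3_gen_2-}): from the pointwise $g(x)^2\le|A|\,(f^2*A)(x)$ with $f=A*_{k-1}A$, $g=A*_kA$, the only level-set bound you can extract without further input is Markov applied to $f^2*A$, namely $|\{y:g(y)\ge s\}|\le|A|^2\,\T_k(A)\,s^{-2}$. This has $s^{-2}$ decay (equivalently $j^{-1/2}$), not $s^{-3}$, and the constant $\T_k(A)$ is not controlled by the inductive hypothesis alone (it depends on $|kA|$, which you have not bounded), so it is not dominated by the claimed $|A|^{3\alpha_{k+1}}s^{-3}$ in the regime that matters. The recurrence $\alpha_{k+1}=\alpha_k+1-\tfrac{2}{3}2^{-k}$ that you write down is the right one, but it does not emerge from a single Cauchy--Schwarz step. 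The genuine argument decomposes $(A*_kA)(y)=\sum_{a\in A}f(y-a)$ by levels $D_t=\{z:f(z)\ge t\}$, uses the inductive level-set bound $|D_t|\ll|A|^{3\alpha_k}t^{-3}$ on each layer, and then, at each scale $t$, invokes a fresh incidence bound for the mixed correlation $A\c D_t$ before optimizing over $t$. As sketched, the layer-cake step is not carried out, and the $t^{-3}$ decay does not survive your argument.
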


As was realized by Li \cite{Li} (see also \cite{ss_E_k}) that subsets $A$ of real numbers
with small multiplicative doubling looks like convex sets.
More precisely, the following lemma from \cite{ss_E_k} holds.

\begin{lemma}
    Let $A,B \subseteq \R$ be finite sets and let $|AA| = M |A|$.
    Then arranging $(A\c B) (x_1) \ge (A\c B) (x_2) \ge \dots $, we have
$$
    (A\c B) (x_j) \ll (M \log M)^{2/3} |A|^{1/3} |B|^{2/3} j^{-1/3} \,.
$$
In particular
$$
    \E (A,B) \ll M \log M |A| |B|^{3/2} \,.
$$
\label{l:arranging_gen}
\end{lemma}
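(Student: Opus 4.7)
The plan is to reduce the arrangement inequality to a popular-difference estimate and then run an Elekes--Solymosi style incidence argument in which the multiplicative small-doubling of $A$ plays the role played by convexity in Lemma~\ref{l:E_3_convex}. Setting $\tau := (A\c B)(x_j)$ and $D := \{x : (A\c B)(x) \ge \tau\}$, the arrangement forces $|D| \ge j$, so the lemma is equivalent to proving the popular-difference bound
\[
    |D|\cdot \tau^{3} \;\ll\; (M\log M)^{2}\,|A|\,|B|^{2},
\]
which, solved for $\tau$, recovers the stated inequality. The second bound $\E(A,B) \ll M\log M\cdot |A|\,|B|^{3/2}$ then follows by writing $\E(A,B)=\sum_j (A\c B)(x_j)^{2}$ and summing the resulting geometric series dyadically in $\tau$.

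To establish the popular-difference bound I pass without loss of generality to $A\subseteq \R_{>0}$, and for each $d\in D$ I fix $\tau$ representations $d = b - a$ with $(a,b)\in A\times B$. For every scalar $\lambda \in A$, the rescaled pair $(\lambda a,\lambda b)\in AA\times AB$ lies on the line through the origin of slope $b/a$; grouping these rescaled points by slope produces a configuration of lines through the origin with at least $|A|$ points of $AA\times AB$ on each, and in total at least $|A|\,|D|\,\tau$ point--line incidences. A dyadic decomposition of the slopes into annuli $[2^{k},2^{k+1})$ (this is what injects the $\log M$ factor, once $|A|$ has been compared with $M$ via Plünnecke) combined with Solymosi's neighbouring-slopes trick, which produces distinct points in $(AA+AA)\times(AB+AB)$ from consecutive lines in a common angular sector, converts the incidence count into an upper bound in terms of $|AA+AA|$ and $|AB+AB|$. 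These in turn are controlled via the multiplicative Plünnecke--Ruzsa inequality applied to $|AA|=M|A|$, which gives $|A^{n}A^{-m}|\le M^{n+m}|A|$ and, via the Ruzsa triangle inequality, the corresponding bounds for products of $A$ with $B$.

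Equating the lower bound $|A|\,|D|\,\tau$ with the Solymosi-type upper bound and tracking exponents through the Plünnecke inputs isolates $|D|\,\tau^{3}\ll (M\log M)^{2}|A|\,|B|^{2}$, as required. The principal obstacle is that the natural line family $\{\,y-x=\lambda d\,\}_{\lambda\in A,\,d\in D}$ arising from the scaling is entirely parallel (slope one), so Szemer\'edi--Trotter cannot be invoked directly; the Elekes device of passing to lines through the origin via the map $(a,b)\mapsto (\lambda a,\lambda b)$, followed by Solymosi's sum-product trick on the annular decomposition of slopes, is what circumvents this and produces the $(\log M)$-loss that shows up in the final exponent.
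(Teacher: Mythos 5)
The paper does not prove this lemma; it cites it from Schoen--Shkredov \cite{ss_E_k}, and the argument there (as in Li \cite{Li}) is an Elekes--Szemer\'edi--Trotter incidence count. For $d\in D:=\{x:(A\c B)(x)\gs\tau\}$ one takes the lines $\ell_{\lambda,d}\colon y=x/\lambda+d$, $\lambda\in A$; each is incident to the $\gs\tau$ points $(\lambda a,b)\in AA\times B$ with $b-a=d$, so one gets $\gs|A|\,|D|\,\tau$ incidences against a point set of size $M|A||B|$, and Szemer\'edi--Trotter applied directly (the lines are \emph{not} parallel --- their slopes $1/\lambda$ run over $A$) yields $|D|\ll M^{2}|A||B|^{2}\tau^{-3}$ up to the logarithm, which is technical overhead from handling multiplicities.

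Your proposal departs from this route and the departure opens a gap that cannot be closed under the stated hypotheses. You send $(a,b)\mapsto(\lambda a,\lambda b)\in AA\times AB$, pass to lines through the origin, and invoke Solymosi's neighbouring-slope trick, which converts the incidence lower bound into an upper bound by $|AA+AA|\cdot|AB+AB|$; you then claim these are controlled by multiplicative Pl\"unnecke--Ruzsa together with the Ruzsa triangle inequality. Neither claim holds. Multiplicative Pl\"unnecke gives $|A^{n}A^{-m}|\ll M^{n+m}|A|$, i.e.\ control of multiplicative sidesets, not of additive sumsets of such sets: already $|AA+AA|$ is unbounded in terms of $M$ alone (take $A=\{2,4,\dots,2^{n}\}$; then $|AA|\sim 2n$ but $|AA+AA|\sim n^{2}$, and a geometric progression is the extreme case $M=O(1)$). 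Worse, $|AB|$ is completely free since nothing is assumed about $B$ --- take $B$ a set of primes coprime to the elements of $A$ and $|AB|=|A||B|$ --- and the Ruzsa triangle inequality gives only the circular $|AB|\le|A/A|\,|AB|/|A|$, so it cannot bound $|AB|$, let alone $|AB+AB|$. Finally, the ``principal obstacle'' you identify, that the natural line family is all of slope one, is a symptom of having chosen the wrong lines: the Elekes lines $y=x/\lambda+d$ are not parallel, Szemer\'edi--Trotter applies directly to them, and the Solymosi detour is both unnecessary and irreparable here.
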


\section{Operators}
\label{sec:eigenvalues}


In the section we describe the family of operators (finite matrices).
Using such operators, we obtain a series of inequalities from papers \cite{Li}, \cite{ss}, \cite{ss2},
\cite{s_ineq} and others by
uniform
way.
Also we prove several lemmas which we need in the next sections.
Our notations here differ from the paper \cite{s} and do not use Fourier transform.

Let $g : \Gr \to \C$ be a function, and $A,B\subseteq \Gr$ be two finite sets.
Suppose that $|B| \le |A|$.
By $\oT^{g}_{A,B}$ denote the rectangular matrix
\begin{equation}\label{def:operator1'}
    \oT^{g}_{A,B} (x,y) = g(x-y) A(x) B(y) \,,
\end{equation}
and by $\t{\oT}^{g}_{A,B} (x,y)$ denote the another rectangular  matrix
\begin{equation}\label{def:operator2'}
    \t{\oT}^{g}_{A,B} (x,y) = g(x+y) A(x) B(y) \,.
\end{equation}
Let us describe the simplest properties of matrices $\oT^{g}_{A,B}$ and $\t{\oT}^{g}_{A,B}$.
By Lemma \ref{l:singular_decomposition}, we have
$$
    \oT^{g}_{A,B} (x,y) = \sum_{j=0}^{|B|-1} \la_j (\oT^{g}_{A,B}) u_j (x) v_j (y)
$$
and similar for $\t{\oT}^{g}_{A,B}$.
Here $u_j,v_j$ are
singularfunctions.
We arrange the eigenvalues in order of magnitude
$$
    \la_0 (\oT^{g}_{A,B}) \ge \la_1 (\oT^{g}_{A,B}) \ge \dots \ge \la_{|B|-1} (\oT^{g}_{A,B}) \,,
$$
and similar for $\t{\oT}^{g}_{A,B}$.
We call $\la_0$ the main eigenvalue and $u_0$, $v_0$ the main singularfunctions.
Clearly,
\begin{equation}\label{f:TT*}
    \oT^{g}_{A,B} (\oT^{g}_{A,B})^* (y,y') = B(y) B(y') \Cf_3 (A,g,\ov{g}) (-y,-y') \,,
\end{equation}
\begin{equation}\label{f:TT*_tilde}
    \t{\oT}^{g}_{A,B} (\t{\oT}^{g}_{A,B})^* (y,y') = B(y) B(y') \Cf_3 (A,g,\ov{g}) (y,y') \,,
\end{equation}
\begin{equation}\label{f:T*T}
    (\oT^{g}_{A,B})^* \oT^{g}_{A,B} (x,x') = A(x) A(x') \Cf_3 (B,\ov{g}^c,g^c) (-x,-x') \,,
\end{equation}
\begin{equation}\label{f:T*T_tilde}
    (\t{\oT}^{g}_{A,B})^* \t{\oT}^{g}_{A,B} (x,x') = A(y) A(y') \Cf_3 (B,\ov{g},g) (x,x') \,.
\end{equation}
For real $g$, we get
$$
    (\t{\oT}^{g}_{A,B})^* = \t{\oT}^{g}_{B,A} \,,
$$
and for even real $g$, we obtain
$$
    (\oT^{g}_{A,B})^* = \oT^{g}_{B,A} \,.
$$
By Lemma \ref{l:singular_decomposition}, we have
\begin{equation}\label{f:sum_eigenvalues'}
    \sum_{j=0}^{|B|-1} \la^2_j (\oT^{g}_{A,B}) = \sum_{x,y} |g(x-y)|^2 A(x) B(y)
        \quad \mbox{ and } \quad
            \sum_{j=0}^{|B|-1} \la^2_j (\t{\oT}^{g}_{A,B}) = \sum_{x,y} |g(x+y)|^2 A(x) B(y) \,.
\end{equation}
Further
$$
    \sum_{j} \la_j^4 (\oT^{g}_{A,B})
        =  \sum_{y,y'} B(y) B(y') |\Cf_3 (A,g,\ov{g}) (-y,-y')|^2
            =
$$
\begin{equation}\label{f:sum_squares_eigenvalues'_1}
        = \sum_{x,x'} A(x) A(x') |\Cf_3 (B,\ov{g}^c,g^c) (-x,-x')|^2 \,,
\end{equation}
and
$$
    \sum_{j} \la_j^4 (\t{\oT}^{g}_{A,B})
                =
                    \sum_{y,y'} B(y) B(y') |\Cf_3 (A,g,\ov{g}) (y,y')|^2
                        =
$$
\begin{equation}\label{f:sum_squares_eigenvalues'_2}
    =
        \sum_{x,x'} A(x) A(x') |\Cf_3 (B,\ov{g},g) (x,x')|^2 \,.
\end{equation}

In the following lemma we find, in particular, all eigenvalues as well as all singularfunctions
of operators $\oT^{A-B}_{A,B}$, $\t{\oT}^{A+B}_{A,B}$.

\begin{lemma}
    Let $A,B\subseteq \Gr$ be finite sets, $|B| \le |A|$,
    $D,S\subseteq \Gr$ be two sets such that
    $A-B \subseteq D$, $A+B \subseteq S$.
    Then the main eigenvalues and singularsunctions
    of the operators $\oT^{D}_{A,B}$, $\t{\oT}^{S}_{A,B}$
    equal
    $\la_0 = (|A||B|)^{1/2}$,
    and
    $$
        v_0 (y) = B(y)/|B|^{1/2}\,,
            \quad \mbox{ and } \quad
        u_0 (x) = A(x)/|A|^{1/2}\,,
    $$
    correspondingly.
    All other singular values equal zero.
\label{l:eigenvalues_D,S'}
\end{lemma}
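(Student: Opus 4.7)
The plan is to observe that under the hypotheses $A-B \subseteq D$ and $A+B \subseteq S$, both kernels collapse to rank-one outer products, after which the singular value decomposition can be read off by inspection.

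First I would unpack the definitions. For the operator $\oT^{D}_{A,B}(x,y) = D(x-y) A(x) B(y)$, note that whenever $A(x) B(y) \neq 0$ we have $x \in A$ and $y \in B$, hence $x-y \in A-B \subseteq D$, which forces $D(x-y) = 1$ on the entire support. Consequently the kernel simplifies to the pointwise identity
\begin{equation*}
    \oT^{D}_{A,B}(x,y) = A(x) B(y) \,.
\end{equation*}
An identical argument with $x+y \in A+B \subseteq S$ shows that $\t{\oT}^{S}_{A,B}(x,y) = A(x) B(y)$ as well, so the two operators coincide.

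Next I would produce the SVD directly. The function $(x,y) \mapsto A(x) B(y)$ is a rank-one tensor, and one verifies by inspection that taking $u_0(x) = A(x)/|A|^{1/2}$, $v_0(y) = B(y)/|B|^{1/2}$ and $\la_0 = (|A||B|)^{1/2}$ yields $\la_0 u_0(x) \ov{v_0(y)} = A(x)B(y)$, with $\|u_0\|_2 = \|v_0\|_2 = 1$. By Lemma \ref{l:singular_decomposition} the matrix $\oT^{D}_{A,B}$ admits a singular decomposition with orthonormal singular function systems, and any such decomposition of a rank-one matrix must contribute exactly one nonzero term; therefore $\la_j = 0$ for $j \geq 1$. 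The same conclusion applies verbatim to $\t{\oT}^{S}_{A,B}$.

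There is essentially no serious obstacle here; the only subtle point is just to confirm the rank-one claim cleanly. If one prefers, instead of appealing to uniqueness one may compute $\oT^{D}_{A,B} (\oT^{D}_{A,B})^* (y,y') = |A| \cdot B(y) B(y')$ using formula \eqref{f:TT*} with $g = D$ collapsed as above, which is manifestly a rank-one matrix with single nonzero eigenvalue $|A||B|$ and eigenvector proportional to $B$; extracting square roots yields the stated $\la_0$ and $v_0$, while $u_0$ is then determined by the relation $\oT^{D}_{A,B} v_0 = \la_0 u_0$ from Lemma \ref{l:singular_decomposition}.
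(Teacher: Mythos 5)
Your proof is correct, and your main route is genuinely cleaner than the paper's. You observe up front that $A-B \subseteq D$ forces $D(x-y)=1$ whenever $A(x)B(y)\neq 0$, so the kernel collapses outright to the rank-one tensor $A(x)B(y)$, from which the SVD is immediate by inspection (one nonzero singular value, all others zero by rank). The paper instead computes $\oT^{D}_{A,B}(\oT^{D}_{A,B})^*$ acting on $B$ using \eqref{f:TT*}, identifies $v_0$ and $\la_0$ from that, then recovers $u_0$ via $\oT^{D}_{A,B}v_0 = \la_0 u_0$ (which again uses the same containment to simplify $\sum_y B(y)D(x-y) = |B|$), and finally kills the remaining singular values by invoking the trace identity $\sum_j \la_j^2 = |A||B|$ from \eqref{f:sum_eigenvalues'}. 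Your ``alternative'' paragraph at the end is essentially the paper's argument. The advantage of your primary route is that the rank-one structure makes the vanishing of $\la_1,\dots,\la_{|B|-1}$ transparent without any trace bookkeeping; the paper's route has the modest benefit of illustrating the $\oT\oT^*$ machinery that gets reused in Lemma \ref{l:3/2_energy'}, but as a standalone proof of this lemma yours is the more natural one.
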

\begin{proof}
Using formulas  (\ref{f:TT*}), (\ref{f:TT*_tilde}) it is easy to see that
$$
    (\oT^{D}_{A,B} (\oT^{D}_{A,B})^* B) (y) = B(y) \sum_{y'\in B} \Cf_3 (A,D,D) (-y,-y')
    = |A| |B| B(y) \,,
$$
$$
    (\t{\oT}^{S}_{A,B} (\t{\oT}^{S}_{A,B})^* B) (y)
        = B(y) \sum_{y'\in B} \Cf_3 (A,S,S) (y,y') = |B| B(y) (A\c S) (y) = |A| |B| B(y) \,.
$$
Thus $v_0 (y) = B(y)/|B|^{1/2}$ and $\la_0 = (|A||B|)^{1/2}$.
It follows that
    $$
        u_0 (x) = A(x) (|A|^{1/2} |B|)^{-1} \sum_y B(y) D(x-y) = A(x) / |A|^{1/2} \,,
    $$
    and
    $$
        u_0 (x) = A(x) (|A|^{1/2} |B|)^{-1} (B\c S) (x)  = A(x) / |A|^{1/2} \,,
    $$
    correspondingly for $\oT^{D}_{A,B}$, $\t{\oT}^{S}_{A,B}$.
    By (\ref{f:sum_eigenvalues'}), we have
    $$
        \sum_{j=0}^{|B|-1} \la^2_j = |A| |B| \,.
    $$
    Thus all other singular values equal zero.
$\hfill\Box$
\end{proof}


\bigskip

Now we adapt the arguments from \cite{s_ineq}, see Proposition 28.

\begin{lemma}
    Let $A,B\subseteq \Gr$ be finite sets, $D,S\subseteq \Gr$ be two sets such that
    $A-B \subseteq D$, $A+B \subseteq S$.
    Suppose that $\psi$ be a
    function on $\Gr$.
    Then
\begin{equation}\label{f:3/2_energy_D'}
    |A|^2 \sigma^2 (\psi,B)
        \le
            \E_3(A,B) \sigma(\psi^2,D) \,,
\end{equation}
    and
\begin{equation}\label{f:3/2_energy_S'}
    |A|^2 \sigma^2 (\psi,B)
        \le
            \E_3(A,B) \sigma(\psi^2,S) \,.
\end{equation}
\label{l:3/2_energy'}
\end{lemma}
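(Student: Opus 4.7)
The plan is to derive (\ref{f:3/2_energy_D'}) by a direct Cauchy--Schwarz argument after a convenient change of variables, and then to get (\ref{f:3/2_energy_S'}) by the same argument with sums in place of differences. The hypothesis $A-B\subseteq D$ (resp.\ $A+B\subseteq S$) will play the role of letting me insert the indicator $D(x)D(y)$ (resp.\ $S(x)S(y)$) ``for free'' into the identity that I split with Cauchy--Schwarz.

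Concretely, I would start from
$$
|A|\,\sigma(\psi,B) \;=\; \sum_{a,b,b'} A(a)B(b)B(b')\,\psi(b'-b)
$$
and change variables $x=a-b$, $y=a-b'$, so that $\psi(b'-b)=\psi(x-y)$ while $x,y$ lie automatically in $A-B\subseteq D$. Setting $g(x,y):=\sum_a A(a)B(a-x)B(a-y)$, this rewrites as
$$
|A|\,\sigma(\psi,B) \;=\; \sum_{x,y}\psi(x-y)\,D(x)D(y)\,g(x,y) \,.
$$
A Cauchy--Schwarz split pairing $\psi(x-y)D(x)D(y)$ against $g(x,y)$ then gives
$$
|A|^2\sigma^2(\psi,B) \;\le\; \Bigl(\sum_{x,y}\psi^2(x-y)D(x)D(y)\Bigr)\Bigl(\sum_{x,y} g(x,y)^2\Bigr)\,.
$$
The first factor equals $\sigma(\psi^2,D)$ after reindexing $t=x-y$ and recognising $(D\c D)(t)$. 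For the second factor I would expand $g(x,y)^2$, swap the order of summation to factor the $x$- and $y$-sums as two copies of the convolution identity $\sum_x B(a-x)B(a'-x)=(B\c B)(a'-a)$, and finally collapse $\sum_{a,a'}A(a)A(a')(B\c B)(a-a')^2$ into $\sum_t (A\c A)(t)(B\c B)(t)^2=\E_3(A,B)$.

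For (\ref{f:3/2_energy_S'}) the only modification is to use the substitution $x=a+b$, $y=a+b'$; then $b'-b=y-x$ and $x,y\in A+B\subseteq S$, so $D$ is replaced by $S$ throughout, while the right-hand energy factor is unchanged because $(B\c B)$ is even. I expect the only point really requiring care is the choice of Cauchy--Schwarz grouping: pairing $\psi$ with the indicator and leaving $g$ intact is exactly what reproduces $\E_3(A,B)$ on the right, whereas any other splitting either absorbs $g$ against itself and loses the higher-energy structure, or produces a weaker quantity like $|A||B|^2$ in place of $\E_3(A,B)$. Once that grouping is fixed, the remaining identifications are routine unfolding of convolutions.
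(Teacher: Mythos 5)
Your proof is correct. It uses the same essential Cauchy--Schwarz decomposition as the paper: both write $|A|\,\sigma(\psi,B)$ as a sum over pairs $(\alpha,\beta)\in D\times D$ (or $S\times S$) of $\psi(\alpha-\beta)$ against the three-fold correlation $\Cf_3(-A,B,B)(\alpha,\beta)$ (your $g(x,y)$), split $S(\alpha)S(\beta)\psi(\alpha-\beta)$ from $\Cf_3(-A,B,B)(\alpha,\beta)$, and identify the two factors with $\sigma(\psi^2,D)$ (resp.\ $\sigma(\psi^2,S)$) and $\E_3(A,B)$. The only real difference is presentational: the paper reaches the starting identity $|A|\sigma(\psi,B)=\sum_{\alpha,\beta}S(\alpha)S(\beta)\psi(\alpha-\beta)\Cf_3(-A,B,B)(\alpha,\beta)$ via the singular-value machinery of Lemma \ref{l:eigenvalues_D,S'} (writing $S(x+y)A(x)B(y)=\lambda_0 u_0(x)v_0(y)$ and computing $\langle\cdot,\cdot\rangle$ through the rank-one decomposition), whereas you obtain it directly by the change of variables $(a,b,b')\mapsto(x,y)$ and the observation that $A\pm B\subseteq D,S$ lets you insert the indicators for free. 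Your route is more self-contained and avoids invoking the operator framework, which the paper uses here mainly to keep the exposition uniform with the rest of Section \ref{sec:eigenvalues}; the two arguments buy exactly the same inequality with the same constants.
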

\begin{proof}
    Let us prove (\ref{f:3/2_energy_S'}),
    the proof of (\ref{f:3/2_energy_D'}) is similar.
    Denote by $\la_j$ the singular values of $\oT^{S}_{A,B}$
    and by $u_j$, $v_j$ the correspondent eigenfunctions.
    By Lemma \ref{l:eigenvalues_D,S'}, we, clearly, get
    $$
        S(x+y) A(x) B(y) = \sum_{j=0}^{|B|-1} \la_j u_j (x) v_j (y) = \la_0 u_0 (x) v_0 (y) \,.
    $$
    Using Lemma \ref{l:eigenvalues_D,S'} once more, we have
$$
    \sum_{x\in A}\, \sum_{y,z\in B} S(x+y) S(x+z) \psi (y-z)
        =
            \la^2_0 \sum_{y,z} \psi(y-z) v_0 (y) v_0 (z)
                =
                    |A| \sigma (\psi,B) \,.
$$
But
$$
     \sum_{x\in A}\, \sum_{y,z\in B} S(x+y) S(x+z) \psi (y-z)
                =
                    \sum_{\a,\beta} S(\a) S(\beta) \psi (\a-\beta) \Cf_3 (-A,B,B) (\a,\beta) \,.
$$
By Cauchy--Schwarz, we obtain
\begin{equation}\label{tmp:05.10.2012_1'}
    |A|^2 \sigma^2 (\psi,B)
        \le
            \E_3(A,B) \sum_{\a,\beta} S(\a) S(\beta) \psi^2 (\a-\beta)
                =
                    \E_3(A,B) \sigma(\psi^2,S)
\end{equation}
as required.
$\hfill\Box$
\end{proof}

\begin{corollary}
For any $A,B\subseteq \Gr$ the following holds
\begin{equation}\label{f:Li}
     |A|^2 \E^2_{3/2} (B)
        \le
            \E_3 (A,B) \E (B,A\pm B)
                \le
                    \E^{1/3}_3 (A) \E^{2/3}_3 (B) \E (B,A\pm B) \,.
\end{equation}
\end{corollary}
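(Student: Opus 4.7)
The plan is to obtain both halves of (\ref{f:Li}) from results already in hand: the left inequality will follow by specializing Lemma~\ref{l:3/2_energy'} to a cleverly chosen test function, while the right inequality is a one-line Hölder estimate.

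For the first inequality, I would set $\psi := (B\circ B)^{1/2}$. This choice is designed so that the $B$-slot of the lemma reproduces the target quantity, since
$$\sigma(\psi, B) = \sum_x (B\circ B)^{1/2}(x)(B\circ B)(x) = \sum_x (B\circ B)^{3/2}(x) = \E_{3/2}(B).$$
At the same time $\psi^2 = B\circ B$, so for any set $T \subseteq \Gr$ one has
$$\sigma(\psi^2, T) = \sum_x (B\circ B)(x)(T\circ T)(x) = \E(B, T)$$
by (\ref{f:energy_convolution}). Specializing $T = D = A-B$ in (\ref{f:3/2_energy_D'}) and $T = S = A+B$ in (\ref{f:3/2_energy_S'}) then converts Lemma~\ref{l:3/2_energy'} directly into
$$|A|^2 \E_{3/2}^2(B) \le \E_3(A,B)\,\E(B, A\pm B),$$
which is the first half of (\ref{f:Li}).

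For the second inequality, I would expand $\E_3(A,B) = \sum_x (A\circ A)(x)(B\circ B)^2(x)$ and apply Hölder's inequality with conjugate exponents $3$ and $3/2$, obtaining
$$\E_3(A,B) \le \left(\sum_x (A\circ A)^3(x)\right)^{1/3}\left(\sum_x (B\circ B)^3(x)\right)^{2/3} = \E_3^{1/3}(A)\,\E_3^{2/3}(B).$$
Multiplying by $\E(B, A\pm B)$ gives the second half of (\ref{f:Li}).

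No genuine obstacle is anticipated. The only non-automatic ingredient is the choice $\psi = (B\circ B)^{1/2}$, which is forced by the requirement that both $\psi \cdot (B\circ B)$ and $\psi^2 \cdot (T\circ T)$ assemble into recognizable invariants of $B$ — namely $(B\circ B)^{3/2}$ and $(B\circ B)(T\circ T)$ — so that the two slots of Lemma~\ref{l:3/2_energy'} simultaneously produce $\E_{3/2}(B)$ on the left and an honest energy $\E(B, A\pm B)$ on the right.
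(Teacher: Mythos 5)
Your proof is correct and is exactly the intended derivation: the paper states the corollary immediately after Lemma~\ref{l:3/2_energy'} without spelling out the proof, and the natural specialization is precisely $\psi=(B\circ B)^{1/2}$ with $D=A-B$, $S=A+B$, so that $\sigma(\psi,B)=\E_{3/2}(B)$ and $\sigma(\psi^2,D)=\E(B,A-B)$ (resp. $\sigma(\psi^2,S)=\E(B,A+B)$), followed by H\"older with exponents $3,3/2$ for the second inequality.
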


This inequality was obtained in \cite{Li}.

\begin{corollary}
For any $A\subseteq \Gr$ the following holds
\begin{equation*}\label{}
    |A|^6 \le \E_3 (A) \cdot \sum_{x\in A-A} ((A\pm A) \c (A\pm A)) (x) \,.
\end{equation*}
\end{corollary}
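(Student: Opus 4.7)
The plan is to derive the claim as a one-line application of Lemma~\ref{l:3/2_energy'}, specializing to $B=A$ and choosing the test function $\psi$ carefully. Both inequalities encoded in the $\pm$ notation will fall out from the two parallel versions (\ref{f:3/2_energy_D'}) and (\ref{f:3/2_energy_S'}) of that lemma, the first one producing the $(A-A)\circ(A-A)$ version and the second producing the $(A+A)\circ(A+A)$ version.

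The key choice is $\psi = \mathbf{1}_{A-A}$, the indicator of the difference set. The two terms entering Lemma~\ref{l:3/2_energy'} then simplify cleanly:
\begin{itemize}
\item For $\sigma(\psi,A) = \sum_x \mathbf{1}_{A-A}(x)\,(A\c A)(x)$, I would use the trivial but essential observation that $A\c A$ is supported on $A-A$, so $\mathbf{1}_{A-A}$ acts as the identity on it and $\sigma(\psi,A)=\sum_x (A\c A)(x)=|A|^2$. Squaring gives the factor $|A|^4$ on the left-hand side of (\ref{f:3/2_energy_D'})/(\ref{f:3/2_energy_S'}), which combined with the $|A|^2$ prefactor from the lemma produces the desired $|A|^6$.
\item For $\sigma(\psi^2,D)$ (respectively $\sigma(\psi^2,S)$), since $\psi^2=\psi=\mathbf{1}_{A-A}$, this is exactly $\sum_{x\in A-A}(D\c D)(x)$. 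Choosing $D=A-A$ (which of course satisfies $A-A\subseteq D$) yields $\sum_{x\in A-A}((A-A)\c(A-A))(x)$, and choosing $S=A+A$ yields $\sum_{x\in A-A}((A+A)\c(A+A))(x)$.
\end{itemize}

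Substituting these two evaluations into the two inequalities of Lemma~\ref{l:3/2_energy'} simultaneously delivers both forms of the claim. There is essentially no obstacle beyond spotting the right $\psi$: the whole argument is a substitution, and the only point that requires a sentence of justification is why $\sigma(\psi,A)=|A|^2$, i.e.\ the support property of $A\c A$. No Cauchy--Schwarz, no singular value decomposition, no auxiliary inequalities beyond Lemma~\ref{l:3/2_energy'} itself are needed.
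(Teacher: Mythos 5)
Your proposal is correct and matches the intended derivation: the paper states the corollary immediately after Lemma~\ref{l:3/2_energy'} without writing out the substitution, but the route is exactly yours — take $B=A$, $\psi=\mathbf{1}_{A-A}$, $D=A-A$ (resp.\ $S=A+A$), observe that $A\c A$ is supported on $A-A$ so $\sigma(\psi,A)=|A|^2$ and $\psi^2=\psi$, and read off the two inequalities. Nothing more is needed, and your justification of the support observation is the only nontrivial remark.
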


That is an inequality from \cite{ss2}.

\bigskip

Now we need in more symmetric version of the operators above.

Let $g : \Gr \to \C$ be a function, and $A\subseteq \Gr$ be a finite set.
By $\oT^{g}_A$ denote the matrix
\begin{equation}\label{def:operator1}
    \oT^{g}_A (x,y) = g(x-y) A(x) A(y) \,,
\end{equation}
and by $\t{\oT}^{g}_A (x,y)$ the matrix
\begin{equation}\label{def:operator2}
    \t{\oT}^{g}_A (x,y) = g(x+y) A(x) A(y) \,.
\end{equation}
General theory of such operators was developed in \cite{s},
and applications can be found in \cite{ss_E_k}, \cite{s}, \cite{s_ineq}, \cite{s_heilbronn}.
Here we describe the simplest properties of matrices $\oT^{g}_A$ and $\t{\oT}^{g}_A$.
It is easy to see that
$\oT^{g}_A$ is hermitian iff $\ov{g(-x)}=g(x)$
and
$\t{\oT}^{g}_A$ is hermitian iff $g$ is a real function.
Below we will deal with just hermitian operators with real functions $g$.
In the case we arrange the eigenvalues in order of magnitude
$$
    |\mu_0 (\oT^{g}_A)| \ge |\mu_1 (\oT^{g}_A)| \ge \dots \ge |\mu_{|A|-1} (\oT^{g}_A)| \,,
$$
and similar for $\t{\oT}^{g}_A$.
We call $\mu_0$ the main eigenvalue and the correspondent eigenfunction
as the main eigenfunction.
By Lemma \ref{l:singular_decomposition} the following holds
\begin{equation}\label{f:sum_eigenvalues}
    \sum_{j} \mu_j (\oT^{g}_A) = g(0) |A|
        \quad \mbox{ and } \quad
            \sum_{j} \mu_j (\t{\oT}^{g}_A) = \sum_x A(x) g(2x) \,.
\end{equation}
Further, in the case of hermitian (normal) $\oT^{g}_A$, $\t{\oT}^{g}_A$, we get
\begin{equation}\label{f:sum_squares_eigenvalues}
    \sum_{j} |\mu_j (\oT^{g}_A)|^2 = \sum_z |g(z)|^2 (A\c A) (z)
        \quad \mbox{ and } \quad
            \sum_{j} |\mu_j (\t{\oT}^{g}_A)|^2 = \sum_x |g(z)|^2 (A* A) (z) \,.
\end{equation}
Let also $f_0,f_1,\dots, f_{|A|-1}$ be the sequence of correspondent eigenfunctions.
Some results on the eigenfunctions can be found in \cite{s_ineq}.

\bigskip

Of course, the eigenvalues of operators $\oT^g_{A,A}$ and
the eigenvalues of operators $\oT^g_{A}$
are connected by a simple formula $\la_j (\oT^g_{A,A}) = |\mu_j (\oT^g_{A,A})|$,
provided by $\oT^g_{A}$ is a hermitian operator.
The same
formula holds for $\t{\oT}^g_{A,A}$, $\t{\oT}^g_{A}$.

\bigskip

\begin{example}
    One of the main ideas of using the operators of such sort is an attempt to find
    additively better subsets of $A$ than the whole set $A$.
    A typical example here is the following.
    Let $A=H\bigsqcup \L \subseteq \F_p^n$, where $H$ is a subspace and $\L$ is a dissociated set (basis).
    Suppose that $|H| \gg |A|^{2/3}$, $|H| \ll |A|$.
    Then $\E(A) \sim \E(H)$ and $A$ is not the main eigenfunction of the operator $\oT^{A\c A}_A$
    because of $\E(A)/|A| < \E(H)/|H| \le \E(A,H) / |H|$.
    Thus, 
    the main eigenfunction "sits" on $H$ not on whole $A$ in the case.
    Another idea
    of the operators method is an attempt to use "local"\, analysis on $A$
    in contrast to Fourier transformation method which is defined on the whole group $\Gr$.
\label{exm:H_cup_L}
\end{example}


We have an analog of Lemma \ref{l:eigenvalues_D,S'} with a similar proof.

\begin{lemma}
    Let $A\subseteq \Gr$ be a finite set, $D,S\subseteq \Gr$ be two sets such that
    $A-A \subseteq D$, $A+A \subseteq S$.
    Then $\oT^{D}_A$, $\t{\oT}^{S}_A$ have $\mu_0 = |A|$, $f_0 (x) = A(x)/|A|^{1/2}$
    and all other eigenvalues equal zero.
\label{l:eigenvalues_D,S}
\end{lemma}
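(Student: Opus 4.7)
The plan is to exploit the containments $A-A\subseteq D$ and $A+A\subseteq S$ to show that, on the support $A\times A$ where both matrices are nonzero, the factor $D(x-y)$ (respectively $S(x+y)$) is identically $1$. Consequently both $\oT^{D}_A$ and $\t{\oT}^{S}_A$ collapse to the rank-one outer product $M(x,y)=A(x)A(y)$, and the result is immediate.

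More precisely, first I would observe that for $x,y\in A$ we have $x-y\in A-A\subseteq D$, so $D(x-y)A(x)A(y)=A(x)A(y)$; and similarly $x+y\in A+A\subseteq S$ yields $S(x+y)A(x)A(y)=A(x)A(y)$. Hence in both cases the operator acts on a test function $f$ by
\begin{equation*}
    (Mf)(x) = A(x) \sum_{y} A(y) f(y) = A(x)\,\langle A, f\rangle \,.
\end{equation*}
This is manifestly a rank-one operator whose range is spanned by $A$, so at most one eigenvalue is nonzero.

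To identify that eigenvalue, plug in the candidate $f_0(x) = A(x)/|A|^{1/2}$: then $\langle A, f_0\rangle = |A|^{1/2}$, and $M f_0(x) = |A|^{1/2} A(x) = |A| f_0(x)$. Thus $\mu_0 = |A|$ with eigenfunction $f_0 = A/|A|^{1/2}$, in perfect analogy with Lemma \ref{l:eigenvalues_D,S'}. To confirm that no other eigenvalue is nonzero, I would appeal either to the rank-one observation directly or to the trace identity from (\ref{f:sum_squares_eigenvalues}):
\begin{equation*}
    \sum_j |\mu_j|^2 = \sum_{z} |D(z)|^2 (A\c A)(z) = \sum_z (A\c A)(z) = |A|^2 = \mu_0^2 \,,
\end{equation*}
(where in the first step we used $A-A\subseteq D$ to drop the indicator against $A\c A$), and analogously $\sum_j |\mu_j(\t{\oT}^{S}_A)|^2 = \sum_z (A*A)(z) = |A|^2$ using $A+A\subseteq S$. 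This forces $\mu_j=0$ for $j\ge 1$.

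There is essentially no obstacle here: the content is the rank-one reduction, and the only thing to check carefully is that both $\oT^{D}_A$ and $\t{\oT}^{S}_A$ are hermitian (as required for the eigenvalue statement), which follows since $D$ and $S$ may be taken symmetric under $x\mapsto -x$ without loss of generality (indeed, $A-A$ is already symmetric, and $S(x+y)$ is automatically symmetric in $(x,y)$), so the hypothesis "$g$ real" and "$\ov{g(-x)}=g(x)$" from the discussion preceding the lemma are satisfied.
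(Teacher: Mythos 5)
Your proof is correct, and the rank-one reduction is a clean, self-contained way to get everything at once. The paper's own proof is terser and structured a bit differently: it simply asserts that $\mu_0=|A|$ with $f_0=A/|A|^{1/2}$ ``is easy to see'' (which is precisely your rank-one computation, left implicit), and then eliminates the remaining eigenvalues by invoking the trace identities $\sum_j \mu_j = |A|$ and $\sum_j |\mu_j|^2 = |A|^2$ from (\ref{f:sum_eigenvalues}) and (\ref{f:sum_squares_eigenvalues}). Your approach makes the rank-one structure explicit, which renders the trace step logically unnecessary (you correctly keep it only as a consistency check), so the two routes are morally the same but yours is slightly more transparent. One small cleanup: the closing ``WLOG $D$ symmetric'' discussion is not needed and is a touch misleading. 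Hermiticity is automatic here: since $x-y$ and $y-x$ both lie in $A-A\subseteq D$ for $x,y\in A$, the matrix $\oT^D_A$ literally equals the symmetric rank-one matrix $A(x)A(y)$, and the values of $D$ off $A-A$ never enter; no replacement of $D$ is required.
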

\begin{proof}
    It is easy to see that $\mu_0 = |A|$ and $f_0 = A(x)/|A|^{1/2}$ in both cases.
    Further by formulas (\ref{f:sum_eigenvalues}), (\ref{f:sum_squares_eigenvalues}), we obtain
    $$
        \sum_j \mu_j = |A|
            \quad \mbox{ and } \quad
                \sum_j |\mu_j|^2 = |A|^2
    $$
    for eigenvalues of $\oT^{D}_A$ and $\t{\oT}^{S}_A$.
    Thus all other eigenvalues of both operators equal zero.
$\hfill\Box$
\end{proof}

\bigskip

\begin{remark}
    If we take $A=B$ in (\ref{f:3/2_energy_D'}), (\ref{f:3/2_energy_S'}), the function $\psi$ equals
    $\psi(x) = (A\c A) (x) / (D \c D)(x)$ or $\psi(x) = (A\c A) (x) / (S \c S)(x)$ then we get
    $$
        \sum_{x\in D} \frac{(A\c A)^2 (x)}{(D \c D)(x)} \le \frac{\E_3 (A)}{|A|^{2}} \,,
    $$
    and
    $$
        \sum_{x\in D} \frac{(A\c A)^2 (x)}{(S \c S)(x)} \le \frac{\E_3 (A)}{|A|^{2}} \,,
    $$
    A little bit sharper inequality of such form was obtained in \cite{s_ineq}.
\end{remark}

\bigskip

Besides formulas (\ref{f:sum_eigenvalues}), (\ref{f:sum_squares_eigenvalues})
there are some interesting relations between eigenvalues $\mu_\a (\oT^g_A)$
and eigenfunctions $f_\a$ of our operators.
By $g_\a$ denote the mean of the correspondent eigenfunction, that is
$g_\a = \sum_x f_\a (x)$.
We formulate our result for $\oT^g_A$.
Of course, for $\t{\oT}^g_A$ similar statement holds.

\begin{proposition}
    Let $g : \Gr \to \C$ be a function such that $\ov{g(-x)} = g(x)$.
    Then
\begin{equation}\label{f:mu_g_a_1}
    \sum_\a \mu_\a |g_\a|^2 = \sum_x g(x) (A\c A) (x) \,,
\end{equation}
\begin{equation}\label{f:mu_g_a_2}
    \sum_\a |\mu_\a|^2 |g_\a|^2 = \sum_{x\in A} | (g \c A) (x)|^2 \,.
\end{equation}
and if $g$ is a real even nonnegative function then
\begin{equation}\label{f:mu_g_a_3}
    \sum_\a \mu_\a |\mu_\a g_\a|^2 \ge \frac{1}{|A|^2} \left( \sum_x g(x) (A\c A) (x) \right)^3 \,.
\end{equation}
\label{p:mu_g_a}
\end{proposition}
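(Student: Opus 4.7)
The plan is to exploit the spectral decomposition of $A$ in the orthonormal eigenbasis $\{f_\alpha\}_\alpha$ of the hermitian operator $T := \oT^g_A$. Since each $f_\alpha$ is supported on $A$ and $T$ is hermitian, the coefficient of $A$ in this basis is $\langle A, f_\alpha\rangle = \overline{g_\alpha}$, yielding the Parseval-type identity $\sum_\alpha |g_\alpha|^2 = |A|$, and all $\mu_\alpha$ are real.

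Identities (\ref{f:mu_g_a_1}) and (\ref{f:mu_g_a_2}) are then immediate. For (\ref{f:mu_g_a_1}) I would compute $\langle TA, A\rangle$ two ways: spectrally it is $\sum_\alpha \mu_\alpha |g_\alpha|^2$, while directly from the matrix entries $\sum_{x,y} g(x-y) A(x) A(y) = \sum_z g(z)(A\c A)(z)$. For (\ref{f:mu_g_a_2}), the same dichotomy on $\|TA\|_2^2$ gives the spectral side $\sum_\alpha |\mu_\alpha|^2 |g_\alpha|^2$; on the other side, $(TA)(x) = A(x)(g*A)(x)$, and the hermitian condition $\ov{g(-x)} = g(x)$ implies $(g*A)(x) = \ov{(g\c A)(x)}$, so the moduli agree and $\|TA\|_2^2 = \sum_{x\in A} |(g\c A)(x)|^2$.

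For the inequality (\ref{f:mu_g_a_3}) I would chain two Cauchy--Schwarz steps. Set $S := \sum_x g(x)(A\c A)(x)$ and $L := \sum_\alpha \mu_\alpha^3 |g_\alpha|^2$ (using reality of $\mu_\alpha$ to rewrite $\mu_\alpha |\mu_\alpha g_\alpha|^2 = \mu_\alpha^3 |g_\alpha|^2$). The first step is $\langle TA, A\rangle^2 \le \|A\|^2 \|TA\|^2$, which after substituting the identities just proved reads $S^2 \le |A| \sum_\alpha \mu_\alpha^2 |g_\alpha|^2$. The second step is a spectral Cauchy--Schwarz: write $\mu_\alpha^2 |g_\alpha|^2 = (\mu_\alpha^{1/2}|g_\alpha|)\cdot(\mu_\alpha^{3/2}|g_\alpha|)$ to obtain $(\sum_\alpha \mu_\alpha^2 |g_\alpha|^2)^2 \le S \cdot L$. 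Chaining gives $L \ge (\sum_\alpha \mu_\alpha^2 |g_\alpha|^2)^2 / S \ge (S^2/|A|)^2/S = S^3/|A|^2$, which is (\ref{f:mu_g_a_3}).

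The hard part is the second Cauchy--Schwarz: the splitting $\mu_\alpha^{1/2} \cdot \mu_\alpha^{3/2}$ tacitly requires $\mu_\alpha \ge 0$ for every $\alpha$, i.e., that $T$ be positive semidefinite. Pointwise nonnegativity of $g$ only guarantees that $T$ has nonnegative entries (so by Theorem \ref{t:Perron-Frobenius} the principal $\mu_0 \ge 0$), not that the full spectrum is nonnegative. The standard way to secure the PSD property in the setting of the paper is to take $g$ to be positive-definite on $\Gr$ (as happens automatically for the main applications, e.g., $g = B \c B$, where $T$ is a principal submatrix of the PSD kernel $(g(x-y))_{x,y\in\Gr}$). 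An equivalent one-shot derivation via H\"older, $S \le L^{1/3} |A|^{2/3}$, runs into exactly the same positivity requirement, so this subtlety cannot be sidestepped.
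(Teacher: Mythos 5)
Your proofs of (\ref{f:mu_g_a_1}) and (\ref{f:mu_g_a_2}) are correct and essentially the same as the paper's: the paper also reads (\ref{f:mu_g_a_1}) off $\langle \oT^g_A A, A\rangle$, and obtains (\ref{f:mu_g_a_2}) from $\mu_\a g_\a = \sum_x A(x)(g*f_\a)(x)$ together with orthonormality of the $f_\a$, which is the same computation as your evaluation of $\|\oT^g_A A\|_2^2$.

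The problem is (\ref{f:mu_g_a_3}), and it sits exactly where you flagged it --- but your conclusion that the positivity issue ``cannot be sidestepped'' is incorrect, and as it stands you have proved only a weaker statement than the proposition: your splitting $\mu_\a^{1/2}\cdot\mu_\a^{3/2}$ needs every $\mu_\a\ge 0$, i.e.\ positive semidefiniteness of $\oT^g_A$, whereas the hypothesis is only that $g$ is real, even and pointwise nonnegative, and such $g$ can perfectly well produce negative eigenvalues. The paper avoids the spectrum's signs altogether by invoking Carbery's multilinear generalization of the Cauchy--Schwarz inequality (inequality (\ref{tmp:20.12.2012_2}) in the paper), namely $\langle \oT f_1, f_2 \rangle^3 \le \|f_1\|_3^3\,\|f_2\|_3^3\,\sum_{x,y}\oT(x,y)\bigl(\sum_a \oT(x,a)\bigr)\bigl(\sum_b \oT(b,y)\bigr)$, which requires only \emph{entrywise} nonnegativity of $\oT$, $f_1$, $f_2$ --- exactly what pointwise nonnegativity of $g$ gives for the kernel $\oT^g_A(x,y)=g(x-y)A(x)A(y)$. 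Taking $f_1=f_2=A$, the left-hand side is $\bigl(\sum_x g(x)(A\c A)(x)\bigr)^3$, the factor $\|f_1\|_3^3\|f_2\|_3^3$ equals $|A|^2$, and expanding the kernel and its row and column sums in the (real) eigenbasis and using orthonormality collapses the double sum to $\sum_\a \mu_\a|\mu_\a g_\a|^2$, giving (\ref{f:mu_g_a_3}) under the stated hypothesis. So the missing ingredient is this trilinear inequality (or some other argument insensitive to the signs of the eigenvalues); without it, your Cauchy--Schwarz chain only yields the result when $\oT^g_A$ is positive semidefinite, e.g.\ for positive-definite $g$ such as $g=B\c B$.
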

\begin{proof}
Formula (\ref{f:mu_g_a_1}) follows from the definition of the operator $\oT^g_A$,
because of
$\langle \oT^g_A A, A \rangle = \sum_x g(x) (A\c A) (x)$.
To prove (\ref{f:mu_g_a_2}) note that
$$
    \mu_\a f_\a (x) = A(x) (g * f_\a) (x) \,.
$$
Thus
\begin{equation}\label{tmp:20.12.2012_1}
    \mu_\a g_\a = \sum_x A(x) (g * f_\a) (x) \,.
\end{equation}
Taking square of (\ref{tmp:20.12.2012_1}), summing over $\a$
and using orthogonality of $f_\a$, we get
$$
    \sum_\a |\mu_\a|^2 |g_\a|^2 = \sum_\a \sum_{x,x'\in A} f_\a (z) \ov{f_\a (z')} g(x-z) \ov{g(x'-z')}
        =
            \sum_{x,x'\in A}\, \sum_{z\in A} g(x-z) \ov{g(x'-z)}
                =
$$
$$
    = \sum_{x\in A} | (g \c A) (x)|^2 \,.
$$

To obtain (\ref{f:mu_g_a_3}) recall a useful inequality of A. Carbery \cite{Carbery_KB}
(see also \cite{Carbery_KB_elem}), namely,
\begin{equation}\label{tmp:20.12.2012_2}
    \langle \oT f_1, f_2 \rangle^3
        \le
            \|f_1 \|_3^3 \|f_2 \|_3^3
                \cdot
                    \sum_{x,y} \oT (x,y) \left( \sum_a \oT (x,a) \right) \left( \sum_b \oT (b,y) \right)
\end{equation}
which holds for $\oT, f_1,f_2 \ge 0$.
Because of
$$
    \oT^g_A (x,y) = \sum_\a \mu_\a f_\a (x) \ov{f_\a (y)}
$$
substitution $\oT = \oT^g_A$ and $f_1=f_2=A$
into (\ref{tmp:20.12.2012_2}) gives us
$$
    \left( \sum_x g(x) (A\c A) (x) \right)^3
        \le
            |A|^2 \cdot \sum_{x,y} \sum_\a \mu_\a f_\a (x) \ov{f_\a (y)}
                \left( \sum_\beta \ov{\mu_\beta f_\beta (x)} g_\beta  \right)
                    \left( \sum_\gamma \ov{\mu_\gamma g_\gamma} f_\gamma (y) \right)
    =
$$
$$
    = |A|^2 \cdot \sum_\a \mu_\a |\mu_\a g_\a|^2 \,.
$$
This completes the proof.
$\hfill\Box$
\end{proof}

\bigskip

Let $t$ be a positive integer.
By $(\oT^g_A)^{\otimes}$ denote the operator $\oT^{g^{\otimes}}_{A^{\otimes}}$,
where tensor power of the functions $g$ and $A$ is taken $t$ times.
We will call the obtained operator as $t$--tensor power of $\oT^g_A$.
Of course, if $\oT^g_A$ is hermitian then $(\oT^g_A)^{\otimes}$ is also hermitian.
Let us prove a result on tensor powers of operators $\oT^g_A$.

\begin{lemma}
    Let $t$ be a positive integer
    and $g : \Gr \to \C$ be a function such that $\ov{g(-x)} = g(x)$.
    Then eigenvalues and eigenfunctions of $t$--tensor power $(\oT^g_A)^{\otimes}$
    coincide with all $t$--products of eigenvalues and eigenfunctions of the operator $\oT^g_A$.
    In particular $\mu_0 ((\oT^g_A)^{\otimes}) = \mu^t_0 (\oT^g_A)$.
\label{l:tensor_operator}
\end{lemma}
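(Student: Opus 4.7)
The plan is to unfold the definition of the tensor-power operator, recognize it as a bona fide tensor product of the matrix $\oT^g_A$ with itself $t$ times, and then invoke the standard fact that the spectrum of a tensor product is the set of products of spectra.

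First I would verify the key identity
\[
    (\oT^g_A)^{\otimes} (\v{x},\v{y}) = \prod_{j=1}^t \oT^g_A (x_j,y_j) \,,
\]
where $\v{x} = (x_1,\dots,x_t)$, $\v{y}=(y_1,\dots,y_t)$. This is a direct computation from the definition $(\oT^g_A)^{\otimes} = \oT^{g^{\otimes}}_{A^{\otimes}}$ together with (\ref{f:tensor_convolutions}): indeed $g^{\otimes}(\v{x}-\v{y}) = \prod_j g(x_j-y_j)$ and $A^{\otimes}(\v{x}) A^{\otimes}(\v{y}) = \prod_j A(x_j) A(y_j)$, so the matrix entries factor as claimed. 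The hermiticity of $(\oT^g_A)^{\otimes}$ under the hypothesis $\ov{g(-x)}=g(x)$ is inherited from the hermiticity of $\oT^g_A$.

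Next I would use the spectral decomposition of the hermitian operator $\oT^g_A$,
\[
    \oT^g_A (x,y) = \sum_\a \mu_\a f_\a (x) \ov{f_\a (y)} \,,
\]
and substitute it factorwise into the product formula above. Expanding the product of sums I obtain
\[
    (\oT^g_A)^{\otimes} (\v{x},\v{y}) = \sum_{\a_1,\dots,\a_t} \l( \prod_{j=1}^t \mu_{\a_j} \r)
        \l( \prod_{j=1}^t f_{\a_j} (x_j) \r) \ov{\l( \prod_{j=1}^t f_{\a_j} (y_j) \r)} \,.
\]
Since the family $\{ f_\a \}$ is orthonormal in $\ell^2 (A)$, the tensor products $f_{\a_1} \otimes \dots \otimes f_{\a_t}$ form an orthonormal family in $\ell^2 (A^t)$ of the correct cardinality $|A|^t$, so the displayed identity is precisely the spectral decomposition of $(\oT^g_A)^{\otimes}$. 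Reading off, the eigenvalues are the products $\mu_{\a_1} \cdots \mu_{\a_t}$ and the eigenfunctions are the corresponding tensor products of the $f_\a$.

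Finally, to get the "in particular" clause, note that $|\mu_{\a_1}\cdots \mu_{\a_t}| \le |\mu_0|^t$ with equality when all $\a_j = 0$, so the main eigenvalue of $(\oT^g_A)^{\otimes}$ is $\mu_0^t$ as asserted. There is no real obstacle here; the only thing to be careful about is the orthonormality and completeness of the tensor basis within the ambient space $\ell^2 (A^t)$, which is a direct consequence of orthonormality and completeness of $\{ f_\a \}$ on $\ell^2 (A)$.
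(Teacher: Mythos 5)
Your proposal is correct and follows essentially the same route as the paper: factor the kernel of $(\oT^g_A)^{\otimes}$ via (\ref{f:tensor_convolutions}), substitute the spectral decomposition of $\oT^g_A$ factorwise, and observe that the $|A|^t$ tensor products of eigenfunctions form an orthonormal eigenbasis. The paper states this more tersely as "it is easy to check," while you spell out the resulting spectral decomposition explicitly, but the underlying argument is identical.
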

\begin{proof}
Because of $\ov{g(-x)} = g(x)$ the operator $\oT^g_A$ is hermitian.
Let $\{ f_\a\}$, $\a\in [|A|]$ be the family of all orthonormal eigenfunctions of $\oT^g_A$.
Using formula (\ref{f:tensor_convolutions}) and the definition of
the operator $\oT^g_A$ it is easy to check that $|A|^t$
products of the form $\prod_{j=1}^{|A|} f_{\a_j}$, $\a_j \in \{ 0,1,\dots,|A|-1 \}$
are orthonormal eigenfunctions of $(\oT^g_A)^{\otimes}$.
This completes the proof.
$\hfill\Box$
\end{proof}

\bigskip

In terms of the eigenvalues it is very natural to formulate structural results
from papers \cite{ss_E_k}, \cite{s_ineq}.
Here we give for example a variant of Theorem 56 from \cite{s_ineq}.

\begin{theorem}
    Let $A\subseteq \Gr$ be a set, $\mu_0 = \oT^{A\c A}_A$, and $\E_3 (A) = M \mu_0^2$.
    Suppose that $M\le \mu_0 / (6|A|)$.
    Then there is a real number $r$
    \begin{equation}\label{cond:r_old}
        1\le r \le \frac{1}{|A|} \max_{x\neq 0} (A\c A)(x)
                    \cdot \frac{|A|^2}{\mu_0} M^{1/2}
            \le
                \frac{|A|^2}{\mu_0} M^{1/2} \,,
    \end{equation}
    and a set $A' \subseteq A$
    such that
    \begin{equation}\label{f:E_3_size_old}
        |A'| \gg  M^{-23/2} r^{-2} \log^{-9} |A| \cdot |A|  \,,
    \end{equation}
    and
    \begin{equation}\label{f:E_3_doubling_old}
        |nA'-mA'| \ll (M^9 \log^{6} |A|)^{7(n+m)} r^{-1} M^{1/2} \frac{|A|^2}{\mu_0} |A'|
    \end{equation}
    for every $n,m\in \N$.
\label{t:E_3_M_old}
\end{theorem}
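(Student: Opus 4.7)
The plan is to follow the spectral extraction paradigm: isolate a dyadic level set of $(A\c A)$ carrying most of $\E_3(A)$, analyze a truncated operator on that level set via its Perron--Frobenius eigenfunction, pull back a subset of $A$ on which additive energy concentrates, and close with Balog--Szemer\'{e}di--Gowers plus Pl\"{u}nnecke--Ruzsa. The parameter $r$ in (\ref{cond:r_old}) will naturally be the rescaled dyadic scale of the dominant level set.

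First I would decompose $(A\c A) = \sum_i 2^i P_i$ dyadically on $\Gr \setminus \{0\}$. From $\E_3(A) = M\mu_0^2 = \sum_x (A\c A)(x)^3$ and the hypothesis $M \le \mu_0/(6|A|)$ (which makes the diagonal term $|A|^3$ harmless), pigeonholing isolates a scale $\Delta$ with $\Delta^3|P_\Delta| \gg M\mu_0^2/\log|A|$, and I would set $r \asymp \Delta|A| M^{1/2}/\mu_0$: the bound $\Delta \le \max_{x\ne 0}(A\c A)(x)$ gives the upper bound of (\ref{cond:r_old}), while $\Delta|P_\Delta|\le |A|^2$ forces $r \gg 1$ after absorbing the polylog loss. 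Next form the truncated operator $\oT^{P_\Delta}_A(x,y) = P_\Delta(x-y)A(x)A(y)$, which by Theorem \ref{t:Perron-Frobenius} has a nonnegative main eigenfunction $\varphi_0$; the spectral identities (\ref{f:sum_eigenvalues}) and (\ref{f:sum_squares_eigenvalues}), together with the fact that $\oT^{A\c A}_A$ has at most $\E_3(A)/\mu_0^2 = M$ eigenvalues comparable to $\mu_0$, yield a lower bound $\nu_0 \gg \Delta|P_\Delta|/(|A|\cdot M^{O(1)})$. Dyadically pigeonholing the level of $\varphi_0$ then extracts a subset $A'' \subseteq A$ on which $\varphi_0$ is essentially constant and on which $\sum_{s \in P_\Delta}|A'' \cap (A'' - s)|^2$, hence $\E(A'')$, is large.

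Balog--Szemer\'{e}di--Gowers (Theorem \ref{t:BSzG}) then passes to $A' \subseteq A''$ with $|A'+A'| \ll K'|A'|$ and $K' \asymp r^{-1} M^{1/2}|A|^2/\mu_0$; Pl\"{u}nnecke--Ruzsa promotes this to $|nA'-mA'| \ll K'^{n+m}|A'|$. The extra $(M^9 \log^6|A|)^{7(n+m)}$ factor and the exponent $7(n+m)$ come from iterating the extraction through tensor powers of $\oT^{A\c A}_A$ (Lemma \ref{l:tensor_operator}), each iteration costing a factor $M^{O(1)}$ by the BSG fifth-power loss and a $\log$ by dyadic pigeonholing.

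The main obstacle is the bookkeeping: tracking all polynomial and logarithmic losses to land on the precise exponents $M^{-23/2} r^{-2}\log^{-9}|A|$ in the size and $(M^9\log^6|A|)^{7(n+m)}$ in the doubling. In particular the quadratic $r^{-2}$ drop in $|A'|$ must be traced back to two independent dyadic pigeonholings (one for the scale $\Delta$ of $(A\c A)$, one for the level of $\varphi_0$), while the single $r^{-1}$ in the doubling reflects only the $\Delta$-scale. Optimising the number of tensor-power rounds to lock in the precise $M$-exponents is the delicate final step.
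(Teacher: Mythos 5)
The paper does not actually prove Theorem~\ref{t:E_3_M_old}; it is stated verbatim as ``a variant of Theorem~56 from \cite{s_ineq}'' and serves only to motivate the new Theorem~\ref{t:E_3_M}, which carries the paper's own proof. So the fair comparison is with that proof, and the methods of \cite{s_ineq}. Against that benchmark your proposal has a structural gap, not merely bookkeeping difficulties.

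The engine of the paper's argument is \emph{not} ``pigeonhole the level set of the Perron--Frobenius eigenfunction of $\oT^{P_\Delta}_A$ to extract a good subset $A''\subseteq A$.'' After isolating the dyadic level set $D=D_j$ and forming the weighted operator $\oT_1 = \oT^{g}_A$ with $g=(A\c A)D$, the key step is an \emph{upper} bound on $\mu_0^3(\oT_1)$ via the Carbery multilinear Cauchy--Schwarz inequality (Corollary~\ref{cor:mu_energy_mu_g} / Proposition~\ref{p:triangles_g}), giving a triangle-count $\sum_{x,y,z\in A}g(x-y)g(x-z)(A\c A)(y-z)$. Two further Cauchy--Schwarz applications and a truncation on the size of $(A\c A)(\a-\beta)$ turn this into a \emph{lower} bound for $\E(D)$ of the form $\E(D)\gg \mu|D|^3$. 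Balog--Szemer\'edi--Gowers and Pl\"unnecke--Ruzsa are then applied to the popular difference set $D$, not to a subset of $A$; one only passes to $A'=A\cap(D'+x)$ at the very end via an averaging/translation step. Your proposal has no substitute for the Carbery step, and ``dyadically pigeonholing the level of $\varphi_0$'' to get a subset of $A$ with large energy is not justified by anything stated; as written it is the heart of the proof and it is simply asserted.

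A concrete error: you attribute the $(M^9\log^6|A|)^{7(n+m)}$ factor to ``iterating the extraction through tensor powers of $\oT^{A\c A}_A$,'' claiming each round costs a BSG-type loss. That is not where the $(n+m)$ comes from. One application of BSG to $D$ yields $|D'+D'|\ll\mu^{-6}|D'|$, and then a single application of Pl\"unnecke--Ruzsa gives $|nD'-mD'|\ll\mu^{-6(n+m)}|D'|$ --- the $(n+m)$ in the exponent is the Pl\"unnecke iteration count, not a count of tensor-power rounds. The tensor trick (Lemma~\ref{l:tensor_operator}) is used in this paper only in Theorem~\ref{t:dual_bounds}, and plays no role in either Theorem~\ref{t:E_3_M} or (as far as the cited source suggests) in Theorem~\ref{t:E_3_M_old}. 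Your bound $\nu_0\gg\Delta|P_\Delta|/(|A|M^{O(1)})$ is also weaker than necessary --- testing $\oT^{P_\Delta}_A$ with $A$ itself gives $\nu_0\geq\Delta|P_\Delta|/(2|A|)$ with no $M$-loss --- but that is a secondary issue next to the missing Carbery/Cauchy--Schwarz mechanism and the misreading of where $7(n+m)$ arises.
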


\bigskip

Recall a lemma from \cite{s_ineq}.

\begin{lemma}
    Let $A\subseteq \Gr$ be a set, and $g$ be a nonnegative function,
    $\mu_0 = \mu_0 (\oT^{g}_A)$.
    Then
\begin{equation}\label{f:g_bound}
        |A| \ge \left( \sum_x f_0 (x) \right)^2
            \ge \max \left\{ \frac{\mu_0}{\| g \|_\infty} \,, \frac{\mu^2_0}{\| g \|_2^2} \right\} \,,
\end{equation}
    and
\begin{equation}\label{f:L_infty}
    \| f_0 \|_\infty \le \frac{\| g \|_2}{\mu_0} \,.
\end{equation}
If $\FF{g} \ge 0$ then
\begin{equation}\label{f:L_infty'}
    \| f_0 \|_\infty \le \frac{\| g_1 \|_2}{\mu^{1/2}_0} \,,
\end{equation}
where $g = g_1 \c \ov{g}_1$.
\label{l:g_bound}
\end{lemma}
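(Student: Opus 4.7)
Here is the plan. Since $g$ is assumed nonnegative, the matrix $\oT^g_A$ has nonnegative entries, and Theorem \ref{t:Perron-Frobenius} lets us take the main eigenfunction $f_0$ to be nonnegative. Throughout, I use the eigenvalue equation in the form
\[
    \mu_0 f_0(x) = A(x)(g*f_0)(x),
\]
and the normalization $\|f_0\|_2 = 1$, which gives $\supp f_0 \subseteq A$.

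For (\ref{f:g_bound}), the upper bound $|A| \ge g_0^2$ is just Cauchy--Schwarz, using that $f_0$ is supported on $A$: $g_0^2 = (\sum_x A(x) f_0(x))^2 \le |A|\,\|f_0\|_2^2 = |A|$. For the lower bounds I expand $\mu_0 = \langle \oT^g_A f_0, f_0\rangle = \sum_{x,y} g(x-y) f_0(x) f_0(y)$ in two ways. Bounding $g(x-y)\le \|g\|_\infty$ and factoring yields $\mu_0 \le \|g\|_\infty g_0^2$, hence $g_0^2 \ge \mu_0/\|g\|_\infty$. Alternatively, take $L^2$-norms in the eigenvalue equation: $\mu_0 = \|A\cdot(g*f_0)\|_2 \le \|g*f_0\|_2 \le \|g\|_2\,\|f_0\|_1 = \|g\|_2 g_0$ by Young's inequality, giving $g_0^2 \ge \mu_0^2/\|g\|_2^2$.

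For (\ref{f:L_infty}), I simply Cauchy--Schwarz the eigenvalue equation pointwise: for any $x\in A$,
\[
   \mu_0 |f_0(x)| = |(g*f_0)(x)| \le \|g\|_2\,\|f_0\|_2 = \|g\|_2,
\]
giving $\|f_0\|_\infty \le \|g\|_2/\mu_0$.

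The one step that requires a little calculation is (\ref{f:L_infty'}). Using $g = g_1 \c \ov{g_1}$ and expanding with the substitution $u = v+x$, I first check the identity
\[
    \mu_0 = \sum_{x,y} g(x-y) f_0(x) f_0(y) = \sum_u \Bigl|\sum_x g_1(u-x) f_0(x)\Bigr|^2 = \|g_1 * f_0\|_2^2.
\]
The same manipulation applied inside the eigenvalue equation gives
\[
    (g*f_0)(x) = \sum_u g_1(u-x)\,\ov{(g_1*f_0)(u)},
\]
so by Cauchy--Schwarz $|(g*f_0)(x)|^2 \le \|g_1\|_2^2\cdot\|g_1*f_0\|_2^2 = \|g_1\|_2^2\,\mu_0$. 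Combined with $\mu_0 f_0(x) = A(x)(g*f_0)(x)$ this yields $\mu_0^2\|f_0\|_\infty^2 \le \mu_0\|g_1\|_2^2$, i.e. $\|f_0\|_\infty \le \|g_1\|_2/\mu_0^{1/2}$. The only subtlety is keeping the two-sided convolution $g = g_1\c\ov{g_1}$ straight when reducing the double sum to a square; once that factorization is in place, every remaining bound is one application of Cauchy--Schwarz.
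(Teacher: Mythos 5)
The paper recalls this lemma from \cite{s_ineq} without reproducing a proof, so there is nothing in this paper to compare against; I can only check your argument on its own terms, and it is correct. Perron--Frobenius gives $f_0 \ge 0$ (note this requires $\oT^g_A$ to be hermitian, i.e.\ for real nonnegative $g$ that $g$ is even --- automatic under $\FF g \ge 0$ and implicitly assumed throughout the lemma); the three estimates in (\ref{f:g_bound}) come from Cauchy--Schwarz, from $g \le \|g\|_\infty$ together with nonnegativity of $f_0$, and from Young's inequality $\|g*f_0\|_2 \le \|g\|_2\|f_0\|_1$ respectively; pointwise Cauchy--Schwarz in $\mu_0 f_0(x) = A(x)(g*f_0)(x)$ gives (\ref{f:L_infty}); and for (\ref{f:L_infty'}) the factorization $g = g_1 \c \ov g_1$ together with the shift $w = z+x$ does yield both $\mu_0 = \|g_1 * f_0\|_2^2$ and $(g*f_0)(x) = \sum_w g_1(w-x)\,\ov{(g_1*f_0)(w)}$, after which one more Cauchy--Schwarz closes it. This is the natural proof of the lemma and is consistent with what one would reconstruct from \cite{s_ineq}.
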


Operator $\oT^{A\c A}_A$ is the simplest example of nonnegative defined operator on a set $A$.
On the other hand it is connected with the additive energy, because of
$|A|^{-1} \E(A) \le \mu_0 (\oT^{A\c A}_A)$ by Theorem \ref{t:Perron-Frobenius}, say.
Thus it is natural to try to obtain some estimates on the main eigenvalue of the operator.
We apply Lemma \ref{l:g_bound} to do this.
Another lower bounds for $\mu_0 (\oT^{A\c A}_A)$ are contained
in Theorem \ref{t:dual_bounds} of section \ref{sec:applications3}.

\begin{corollary}
    For any $A\subseteq \Gr$ the following holds
\begin{equation}\label{f:mu_energy_mu_g}
    \mu_0 (\oT^{A\c A}_A )
        \ge
            \max_{g\ge 0}\, \frac{\mu^3 (\oT^g_A)}{\|g\|_2^2 \cdot \|g\|_{\infty}} \,.
\end{equation}
\label{cor:mu_energy_mu_g}
\end{corollary}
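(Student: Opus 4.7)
The plan is to combine Perron--Frobenius for $\oT^g_A$ (given $g\ge 0$), the Rayleigh variational principle for $\oT^{A\c A}_A$, and the two estimates from Lemma~\ref{l:g_bound}. Fix a nonnegative $g$ and let $\mu_0=\mu_0(\oT^g_A)$. By Theorem~\ref{t:Perron-Frobenius}, $\mu_0$ is realized by a nonnegative eigenfunction $f_0$, which I normalize so that $\|f_0\|_2=1$. The eigenvalue equation then gives the two identities I will exploit:
\[
  \mu_0 \;=\; \langle \oT^g_A f_0,f_0\rangle \;=\; \sum_z g(z)(f_0\c f_0)(z), \qquad g_0^2 \;:=\; \Big(\sum_x f_0(x)\Big)^2 \;=\; \sum_z (f_0\c f_0)(z).
\]

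The algebraic heart of the estimate comes from Lemma~\ref{l:g_bound} applied to $\oT^g_A$. It provides the two bounds
\[
  g_0^2 \;\ge\; \frac{\mu_0}{\|g\|_\infty}, \qquad  g_0^2 \;\ge\; \frac{\mu_0^2}{\|g\|_2^2},
\]
whose product yields exactly the quantity appearing in the corollary:
\[
  g_0^4 \;\ge\; \frac{\mu_0^3}{\|g\|_2^2\,\|g\|_\infty}.
\]
So the corollary reduces to producing a Rayleigh-type lower bound $\mu_0(\oT^{A\c A}_A)\ge g_0^4$.

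For this I apply the Rayleigh variational principle to the nonnegative operator $\oT^{A\c A}_A$ with the test function $f_0$. Since $\oT^{A\c A}_A(x,y)=(A\c A)(x-y)A(x)A(y)$ and $\|f_0\|_2=1$,
\[
  \mu_0(\oT^{A\c A}_A) \;\ge\; \langle \oT^{A\c A}_A f_0,f_0\rangle \;=\; \sum_z (A\c A)(z)(f_0\c f_0)(z).
\]
Because $f_0$ is supported on $A$, the convolution $(f_0\c f_0)$ lives on $A-A$, where $(A\c A)(z)\ge 1$. A first Cauchy--Schwarz, using the identity for $\mu_0$, already controls the right-hand side by $\mu_0^2/\|g\|_2^2$; the remaining factor of $g_0^2$ is extracted by combining this with the trivial lower bound $\sum_z(A\c A)(z)(f_0\c f_0)(z)\ge \sum_z(f_0\c f_0)(z)=g_0^2$ and the sharper pointwise estimate $\|f_0\|_\infty\le \|g\|_2/\mu_0$ from (\ref{f:L_infty}) of Lemma~\ref{l:g_bound}, which forces the mass of $f_0\c f_0$ to be spread across enough points of $A-A$ so that $(A\c A)$ can gain the extra $g_0^2$ factor.

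The delicate step is the last one: the direct Cauchy--Schwarz only yields $X:=\sum_z (A\c A)(z)(f_0\c f_0)(z)\ge g_0^2$, and the missing factor of $g_0^2\ge \mu_0/\|g\|_\infty$ must be harvested by combining the $L^\infty$-bound on $f_0$ with the pointwise inequality $(A\c A)(z)\ge 1$ on $A-A$ in a non-trivial way. Once this refined lower bound $X\ge g_0^4$ is established, substituting the product of the two Lemma~\ref{l:g_bound} estimates yields $\mu_0(\oT^{A\c A}_A)\ge g_0^4 \ge \mu_0^3/(\|g\|_2^2\|g\|_\infty)$, and taking the maximum over $g\ge 0$ gives the corollary.
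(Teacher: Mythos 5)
Your reduction is where the argument breaks: the inequality you aim for, $\mu_0 (\oT^{A\c A}_A) \ge g_0^4$ with $g_0 = \sum_x f_0(x)$, is simply false in general, so no elaboration of the "delicate step" can close the gap. Concretely, let $A$ be a Sidon set of size $n$ and take $g = A\c A$ (or $g$ the indicator of a set containing $A-A$). Then $f_0 = A/|A|^{1/2}$, so $g_0^2 = n$ and $g_0^4 = n^2$, while the matrix $\oT^{A\c A}_A$ restricted to $A\times A$ is $(n-1)I + J$, whence $\mu_0 (\oT^{A\c A}_A) = 2n-1 \ll n^2$. (The corollary itself is safe here: for $g=A\c A$ its right-hand side is $(2n-1)^3/(\E(A)\, n) \approx 4$.) The underlying reason is that your identity $\mu_0 = \sum_z g(z)(f_0\c f_0)(z)$ convolves $f_0$ with itself, and the only pointwise leverage available afterwards is $(A\c A)(z)\ge 1$ on $A-A$ together with $\|f_0\|_\infty \le \|g\|_2/\mu_0$; for Sidon-like $A$ the weight $(A\c A)$ really is $1$ on essentially all of $A-A$, so the "extra factor $g_0^2$" you hope to harvest is not there. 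Your first Cauchy--Schwarz step does give the true bound $\langle \oT^{A\c A}_A f_0,f_0\rangle \ge \mu_0^2/\|g\|_2^2$, but this is strictly weaker than the corollary (e.g.\ for $g = (A-A)$-indicator it gives only $O(1)$ instead of $\gg |A|$ for a Sidon set), so multiplying the two estimates of Lemma \ref{l:g_bound} cannot be the route.

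The correct mechanism, which is the paper's, gains the missing factor by convolving $f_0$ with the full set $A$ rather than with itself: summing the eigenvalue equation $\mu f_0(x) = A(x)(g*f_0)(x)$ over $x$ gives
\begin{equation*}
  \mu \sum_x f_0(x) \;=\; \sum_z g(z)\,(f_0\c A)(z) \,,
\end{equation*}
and Cauchy--Schwarz then yields
\begin{equation*}
  \mu^2 \Big(\sum_x f_0(x)\Big)^2 \;\le\; \|g\|_2^2 \sum_z (f_0\c A)^2(z) \;=\; \|g\|_2^2\, \langle \oT^{A\c A}_A f_0, f_0\rangle \;\le\; \|g\|_2^2\, \mu_0(\oT^{A\c A}_A) \,,
\end{equation*}
after which a single application of estimate (\ref{f:g_bound}), namely $\big(\sum_x f_0(x)\big)^2 \ge \mu/\|g\|_\infty$, gives (\ref{f:mu_energy_mu_g}). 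Note that $\sum_z (f_0\c A)^2(z)$ can be as large as $|A|\cdot$(your quantity), which is exactly the room your self-convolution identity throws away; so the fix is not a refinement of your last step but a different starting identity.
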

\begin{proof}
Let $\mu =\mu_0 (\oT^g_A)$ and $f=f_0$ be the correspondent eigenfunction.
Instead of (\ref{f:mu_energy_mu_g}) we prove even stronger inequality,
namely, the same lower bound for $\langle \oT_A^{A\c A} f_0, f_0 \rangle$.
We have
$$
    \mu f (x) = A(x) (g * f) (x) \,.
$$
Thus
$$
    \mu^2 \left( \sum_x f(x) \right)^2
        \le
            \left( \sum_x g(x) (f\c A) (x) \right)^2
                \le
                    \| g\|_2^2 \E(A,f)
                        \le
                            \| g\|_2^2 \mu_0 (\oT^{A\c A}_A ) \,.
$$
Applying estimate (\ref{f:g_bound}) of Lemma \ref{l:g_bound},
we get
$$
    \left( \sum_x f(x) \right)^2 \ge \frac{\mu}{\| g\|_\infty}
$$
and the result follows.
$\hfill\Box$
\end{proof}

\bigskip



\bigskip

We conclude the section recalling a result from \cite{s_ineq}, Proposition 22
(or see the proof of Proposition \ref{p:mu_g_a}).

\begin{proposition}
    Let $A\subseteq \Gr$ be a set, $g_1,g_2$ be even real functions
    and $\{ f_\a \}$ be the eigenfunctions of the operator $\oT^{g_1}_A$.
    Then
$$
    \sum_{x,y,z\in A} g_1 (x-y) g_1 (x-z) g_2 (y-z)
        =
            \sum_{\a=0}^{|A|-1} \mu^2_\a (\oT^{g_1}_A) \cdot \langle \oT^{g_2}_A f_\a, f_\a\rangle \,.
$$
\label{p:triangles_g}
\end{proposition}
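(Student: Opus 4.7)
The plan is to recognize the triple sum on the left-hand side as a trace-like quantity against the square of the operator $\oT^{g_1}_A$, and then use the spectral decomposition of that square.

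First I would rewrite the left-hand side as a double sum by freezing $y,z$ and summing over $x$. Since $g_1$ is even, the inner sum $\sum_{x} A(x) g_1(x-y) g_1(x-z)$ is exactly what appears when one computes
$$(\oT^{g_1}_A)^2(y,z) = \sum_x \oT^{g_1}_A(y,x)\, \oT^{g_1}_A(x,z) = A(y)A(z) \sum_x A(x) g_1(x-y) g_1(x-z),$$
where the last equality uses $g_1(y-x)=g_1(x-y)$. Pulling the factor $A(y)A(z)$ out of the outer sum, this identifies the left-hand side with
$$L := \sum_{y,z} g_2(y-z)\, (\oT^{g_1}_A)^2(y,z).$$

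Next, since $\overline{g_1(-x)}=g_1(x)$ (because $g_1$ is real and even), $\oT^{g_1}_A$ is hermitian, so Lemma~\ref{l:singular_decomposition} gives the spectral decomposition
$$\oT^{g_1}_A(y,z) = \sum_\a \mu_\a f_\a(y) \overline{f_\a(z)}, \qquad (\oT^{g_1}_A)^2(y,z) = \sum_\a \mu_\a^2 f_\a(y) \overline{f_\a(z)},$$
with $\{f_\a\}$ orthonormal and supported on $A$ (as the eigenvalue equation $\oT^{g_1}_A f_\a = \mu_\a f_\a$ forces $f_\a(y)=A(y)f_\a(y)$ for $\mu_\a\neq 0$). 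Substituting into $L$ and interchanging sums,
$$L = \sum_\a \mu_\a^2 \sum_{y,z} g_2(y-z)\, f_\a(y) \overline{f_\a(z)}.$$

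Finally I would identify the inner sum with $\langle \oT^{g_2}_A f_\a, f_\a\rangle$. By definition
$$\langle \oT^{g_2}_A f_\a, f_\a\rangle = \sum_{y,z} g_2(y-z) A(y) A(z) f_\a(z) \overline{f_\a(y)},$$
and the factors $A(y),A(z)$ can be dropped since $f_\a$ is supported on $A$; swapping the dummy labels $y\leftrightarrow z$ and using $g_2(y-z)=g_2(z-y)$ shows this equals $\sum_{y,z} g_2(y-z) f_\a(y)\overline{f_\a(z)}$. Substituting back yields the claimed identity. There is no real obstacle here beyond bookkeeping; the only points requiring care are the use of evenness of $g_1$ to rewrite $(\oT^{g_1}_A)^2$ and the use of evenness of $g_2$ together with support of $f_\a$ in matching the expression with $\langle \oT^{g_2}_A f_\a, f_\a\rangle$.
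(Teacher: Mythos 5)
Your proof is correct and is exactly the natural argument: recognize the left-hand side as $\sum_{y,z} g_2(y-z)\,(\oT^{g_1}_A)^2(y,z) = \tr\big((\oT^{g_1}_A)^2\,\oT^{g_2}_A\big)$, then expand $(\oT^{g_1}_A)^2$ in the spectral basis of $\oT^{g_1}_A$, which is precisely the approach the paper refers to (Proposition 22 of the cited work, and the same eigenfunction decomposition used in the proof of Proposition~\ref{p:mu_g_a}). The bookkeeping with evenness of $g_1$, $g_2$ and the support of the $f_\a$ is handled correctly; no gaps.
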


\section{Convex sets and sets with small multiplicative doubling}
\label{sec:applications}

Now we apply technique from section \ref{sec:eigenvalues} to obtain new
upper bounds for the additive energy of some families of sets.
Let us begin with the convex subsets of $\R$.

\begin{theorem}
    Let $A \subseteq \R$ be a convex set.
    Then
    \begin{equation}\label{f:convex_energy}
        \E (A) \ll |A|^{\frac{32}{13}} \log^{\frac{71}{65}} |A| \,.
    \end{equation}
\label{t:convex_energy}
\end{theorem}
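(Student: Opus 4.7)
The plan is to apply the structural Theorem~\ref{t:E_2_E_3_pred} with parameters tuned via the convexity bound on $\E_3$, and then exploit convexity again at the level of the extracted subset. I would begin by writing $\E(A)=|A|^3/K$ and $\E_3(A)=M|A|^4/K^2$. Lemma~\ref{l:E_3_convex} gives the crucial upper bound $\E_3(A)\ll|A|^3\log|A|$, from which $M\ll K^2\log|A|/|A|$. The desired conclusion $\E(A)\ll |A|^{32/13}\log^{71/65}|A|$ is equivalent to the lower bound $K\gg |A|^{7/13}/\log^{71/65}|A|$, which becomes the target.

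Next I would apply Theorem~\ref{t:E_2_E_3_pred} (in the sharper form of Theorem~\ref{t:E_3_M_old}, which carries the extra parameter $r$) to extract a subset $A'\subseteq A$ with $|A'|\gg M^{-10}\log^{-15}M\cdot|A|$ and
\[
|nA'-mA'|\ll(M^{9}\log^{14}M)^{6(n+m)}K|A'|,\qquad n,m\in\N.
\]
Since $A'\subseteq A$ and $A$ is convex, Lemma~\ref{l:E_3_convex} still applies: $|A'+B|\gg|A'|^{3/2}|B|^{1/2}/|A|^{1/2}$ for every $B$. Iterating this with $B=A',\,2A',\,\dots$ yields $|kA'|\gg|A'|^{3-2^{2-k}}/|A|^{1-2^{1-k}}$ for $k\ge 2$, and combined with the trivial inclusion $|kA'|\le|kA'-A'|$ and the doubling bound applied at $(n,m)=(k,1)$ this gives
\[
|A'|^{2-2^{2-k}}\ll(M^{9}\log^{14}M)^{6(k+1)}K\,|A|^{1-2^{1-k}}.
\]

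Substituting the lower bound on $|A'|$ into this inequality and eliminating $M$ through $M\ll K^2\log|A|/|A|$ leaves, for each $k$, a polynomial inequality $|A|^{\alpha_k}\ll K^{\beta_k}\log^{\gamma_k}|A|$ whose coefficients are explicit. Optimizing over $k$ (and in the refined version over the auxiliary parameter $r$ of Theorem~\ref{t:E_3_M_old}, which allows part of the $M$-loss to be absorbed into $r^{-1}$ in the doubling bound while $r^{-2}$ appears in the size bound on $A'$) produces the required lower bound $K\gg|A|^{7/13}/\log^{71/65}|A|$, and hence the stated upper bound on $\E(A)$.

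The main obstacle is that the polynomial losses in $M$ appearing in the doubling bound of Theorem~\ref{t:E_2_E_3_pred} grow like $M^{54(k+1)}$, and at first glance they exactly cancel the gains from the convex sumset iteration: with $(n,m)=(1,1)$ and $k=2$ one merely recovers the trivial exponent, $\E(A)\ll|A|^{5/2}\log^{O(1)}|A|$. Breaking the $5/2$-barrier and reaching the sharp exponent $32/13$ therefore requires simultaneously iterating convexity to large $k$ (where the convex lower bound tends to $|A'|^3/|A|$) and using the additional flexibility of the parameter $r$ in Theorem~\ref{t:E_3_M_old} to absorb the $M$-loss; the precise logarithmic exponent $71/65$ then emerges from the optimal balance of these choices.
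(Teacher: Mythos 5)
Your proposal takes a genuinely different route from the paper's, and it cannot reach the exponent $32/13$. The polynomial losses in $M$ carried by Theorem~\ref{t:E_2_E_3_pred} are too large for the structural-theorem-plus-convexity-iteration strategy to overcome; the paper's own Remark~\ref{r:worker-peasant} addresses exactly this idea and asserts only that it yields $\E(A)\ll|A|^{5/2-\eps_0}$ for some small absolute $\eps_0>0$. To see the obstruction concretely: substitute $|A'|\gg M^{-10}|A|$ and your iterated convexity bound $|kA'|\gg|A'|^{3-2^{2-k}}|A|^{-1+2^{1-k}}$ into $|kA'|\le|kA'-A'|\ll(M^{9}\log^{14}M)^{6(k+1)}K|A'|$, then eliminate $M$ via $M\ll K^{2}\log|A|/|A|$; up to logarithms this yields $K\gg|A|^{c_k}$ with $c_k=(1-2^{1-k}+b_k)/(2b_k+1)$, where $b_k=54k+74-40\cdot 2^{-k}$. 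The sequence $c_k$ peaks near $k=4$ at roughly $0.5007$ and decays back toward $1/2$ as $k\to\infty$, because the convexity gain saturates at $|A'|^3/|A|$ while the doubling loss $M^{54(k+1)}$ grows linearly in $k$; this falls far short of $7/13\approx 0.5385$. The parameter $r$ in Theorem~\ref{t:E_3_M_old} does not rescue the argument: it trades a factor $r^{-1}$ in the doubling bound for a factor $r^{-2}$ in $|A'|$, and since the size of $A'$ enters the final inequality with exponent close to $2$ while the doubling bound enters with exponent $1$, increasing $r$ is a strictly unfavorable trade.

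The paper's actual proof is direct and never invokes a structural theorem. It pigeonholes onto a level set $D\subseteq A-A$ at scale $\D\approx 2^{j}|A|/K$, sets $g=(A\c A)D$, and works with the operators $\oT_1=\oT^{g}_A$, $\oT_2=\oT^{A}_{A,D}$, $\oT_3=\oT^{A\c A}_A$. Proposition~\ref{p:triangles_g} converts $\mu_0^3(\oT_1)$ into a weighted triangle count $\sum_{\a,\beta}g(\a)g(\beta)(A\c A)(\a-\beta)\Cf_3(A)(\a,\beta)$. Pairs $(\a,\beta)$ with $(A\c A)(\a-\beta)$ below a threshold $d$ are discarded using $\mu_0(\oT_2\oT_2^*)\le\E_3^{1/2}(A)$; Cauchy--Schwarz together with the convexity bound $\E_3(A)\ll|A|^3\log|A|$ is applied; and the residual sum is estimated in two ways via Lemma~\ref{l:E_3_convex}, optimizing over a second dyadic scale $\tau$. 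The exponents $32/13$ and $71/65$ emerge from this two-parameter (in $\D$ and $\tau$) optimization, not from iterating a structural theorem.
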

\begin{proof}
Let $\E = \E(A) = |A|^3 / K$, $\E_3 = \E_3 (A)$, $L=\log |A|$.
Applying formula (\ref{f:E_3_gen_2-}) of Lemma \ref{l:E_3_convex} with $k=1$, we obtain
\begin{equation}\label{tmp:17.11.2012_1}
    2^{-2} \E \le \sum_{s ~:~ 2^{-1} |A| K^{-1} < |A_s| \le cK} |A_s|^2 \,,
\end{equation}
where $c>0$ is an absolute constant.
Put
$$
    D_j = \{ s\in A-A ~:~ 2^{j-2} |A| K^{-1} < |A_s| \le 2^{j-1} |A| K^{-1} \} \,,
$$
where $j \in [l]$, $2^l\le 2c^{} K^2 |A|^{-1} \ll K^2 |A|^{-1}$.
Thus by (\ref{tmp:17.11.2012_1}) the following holds
$$
    2^{-2} \E \le \sum_{j=1}^l \sum_{s\in D_j} |A_s|^2 \,.
$$
By pigeonhole principle, we find $j\in [l]$ such that
\begin{equation}\label{tmp:17.11.2012_D_pred}
    2^{-2} l^{-1} \E  \le \sum_{s\in D_j} |A_s|^2 \le |D_j| (2^{j-1} |A| K^{-1})^2 \,.
\end{equation}
Put $D=D_j$, $\Delta = 2^{j-1} |A| K^{-1}$, and $g(x) = (A\c A) (x) D(x)$.
Consider the operators $\oT_1 = \oT^g_{A}$,
$\oT_2 = \oT^{A}_{A,D}$ and $\oT_3 = \oT^{A\c A}_A$.
Clearly, all elements of matrices $\oT_1, (\oT_2)^* \oT_2$
does not exceed elements of
$\oT_3$ and
the operator $\oT_3$ is nonnegative  defined.
By formula (\ref{tmp:17.11.2012_D_pred}), we have
\begin{equation}\label{tmp:17.11.2012_D}
    \frac{\E}{4l|A|} \le \mu_0 (\oT_1) \,.
\end{equation}
Similarly,
\begin{equation}\label{tmp:17.11.2012_D'}
     \frac{\E}{4l|A|}
        \le
                \mu_0 (\oT_1)
                \le \langle \oT_3 f_0 , f_0 \rangle \,,
\end{equation}
where $f_0 \ge 0$ is the main eigenfunction of the operator $\oT_1$.
Applying Proposition \ref{p:triangles_g} with $A=A$, $g_1=g$, $g_2 = A\c A$,
we obtain
$$
    \mu^3_0 (\oT_1) \le \sum_{x,y,z\in A} g(x-y) g(x-z) (A\c A) (y-z)
$$
because of the operator $\oT_3$ is nonnegative  defined.
Further
\begin{equation}\label{tmp:17.11.2012_2}
    \mu^3_0 (\oT_1)
        \le
            \sum_{\a,\beta} g(\a) g(\beta) (A\c A) (\a-\beta) \Cf_3 (A) (\a,\beta) \,.
\end{equation}
The summation in (\ref{tmp:17.11.2012_2}) can be taken over $\a,\beta$ such that
$$
    (A\c A) (\a-\beta) \ge \frac{\E^2}{32 L^2 |A|^3 \E^{1/2}_3} := d \,.
$$
Indeed, otherwise by formulas (\ref{f:TT*}) and (\ref{f:sum_squares_eigenvalues'_1}), we get
$$
    \mu^3_0 (\oT_1)
        <
        d \Delta^2 \cdot \sum_{\a,\beta} D(\a) D(\beta) \Cf_3 (A) (\a,\beta)
            =
                d \Delta^2 \cdot \langle \oT_2 (\oT_2)^* D, D \rangle
                    \le
$$
$$
                    \le
                        d \Delta^2 |D| \mu_0 ( \oT_2 (\oT_2)^* )
                            \le
                        d \Delta^2 |D| \E^{1/2}_3
$$
and we obtain a contradiction in view of the definition of the set $D$
and inequality (\ref{tmp:17.11.2012_D}).
Thus
$$
    2^{-1} \mu^3_0 (\oT_1)
        \le
            \sum_{\a,\beta ~:~ (A\c A) (\a-\beta) \ge d} g(\a) g(\beta) (A\c A) (\a-\beta) \Cf_3 (A) (\a,\beta) \,.
$$
By Cauchy--Schwartz inequality and Lemma \ref{l:E_3_convex}, we get
$$
    \mu^6_0 (\oT_1)
        \ll
            \E_3
                \sum_{\a \in D,\, \beta \in D ~:~ (A\c A) (\a-\beta) \ge d}
                    (A\c A)^2 (\a) (A\c A)^2 (\beta) (A\c A)^2 (\a-\beta)
$$
\begin{equation}\label{tmp:17.11.2012_5}
    \ll
        |A|^3 L \D^3 \sum_{\a,\beta ~:~ (A\c A) (\a-\beta) \ge d} D(\a) (A\c A) (\a) D(\beta) (A\c A)^2 (\a-\beta)
            =
               |A|^3 L \D^3 \cdot \sigma \,.
\end{equation}
We estimate the quantity $\sigma$ in two different ways.
As above write
\begin{equation}\label{tmp:20.11.2012_1}
    S_i = \{ x ~:~ 2^{i-1} d < (A\c A) (x) \le 2^{i} d \} \,.
\end{equation}
Clearly, by Lemma \ref{l:E_3_convex}, we get $|S_i| \ll |A|^3 / (2^{i} d)^3$.
So for some $i$
\begin{equation}\label{tmp:17.11.2012_4}
    \sigma \ll L \sum_{\a,\beta, \a-\beta \in S_i} D(\a) (A\c A) (\a) D(\beta) (A\c A)^2 (\a-\beta)
        =
            L \sigma_*
\end{equation}
and your task is to estimate $\sigma_*$.
We put $\tau = 2^i d$ and write $S_\tau$ for $S_i$.
First of all by Lemma \ref{l:E_3_convex}, we have
\begin{equation}\label{tmp:17.11.2012_3}
    \sigma_* \ll \D \tau \E(D,A) \ll  \D \tau |A| |D|^{3/2} \,.
\end{equation}
Second of all, applying the same lemma twice and also the estimate $|S_\tau| \ll |A|^3 / \tau^3$, we obtain
\begin{equation}\label{tmp:17.11.2012_3'}
    \sigma_* \ll \tau^2 \sum_\a (A \c A) (\a) (D\c S_\tau) (\a)
        \ll
            \tau^2 |A| |D|^{3/4} |S_\tau|^{3/4}
                \ll
                    \tau^{-1/4} |A|^{13/4} |D|^{3/4} \,.
\end{equation}
Combining estimates (\ref{tmp:17.11.2012_3}), (\ref{tmp:17.11.2012_3'})
and optimizing over $\tau$, we derive
\begin{equation}\label{tmp:20.11.2012_2}
    \sigma_* \ll \Delta^{1/5} |A|^{14/5} |D|^{9/10} \,.
\end{equation}
Returning to (\ref{tmp:17.11.2012_5}), recalling (\ref{tmp:17.11.2012_D}), (\ref{tmp:17.11.2012_D'}),
and substituting the last formula into (\ref{tmp:17.11.2012_4}), we get
$$
    \frac{\E^6}{|A|^6 L^6}  \ll \mu^6_0 (\oT_1)
        \ll |A|^3 L^2 \D^3 \cdot \Delta^{1/5} |A|^{14/5} |D|^{9/10} \,.
$$
Accurate computations, using (\ref{tmp:17.11.2012_D_pred}) show
$$
    \left( \frac{\E}{|A| L} \right)^{51/10}
        \ll |A|^{29/5 + 9/10} L^2 \D^{7/5} \,.
$$
Applying estimate $\D \ll K$ after some calculations we obtain the result.
This completes the proof.
$\hfill\Box$
\end{proof}

\begin{corollary}
    Let $A\subseteq \Z$ be a convex set and
    $$
        P_A (\theta) = \sum_{a\in A} e^{2\pi i a \theta} \,.
    $$
    Then
    $$
        \int_0^{2\pi} |P_A (\theta)|^4\, d \theta \ll |A|^{\frac{32}{13}} \log^{\frac{71}{65}} |A| \,.
    $$
\end{corollary}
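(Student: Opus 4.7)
The plan is to reduce the Fourier-analytic integral directly to the additive energy and then invoke Theorem \ref{t:convex_energy}. Specifically, I would expand
$$
|P_A(\theta)|^4 = P_A(\theta)^2 \overline{P_A(\theta)}^2 = \sum_{a_1,a_2,a_3,a_4 \in A} e^{2\pi i (a_1+a_3-a_2-a_4)\theta} \,,
$$
interchange the (finite) sum with the integral, and apply orthogonality of the characters $e^{2\pi i n \theta}$ over a fundamental period. Modulo fixing the normalization of $\theta$ (the integers $a_j$ force the natural period to be $1$, so a harmless constant factor of $2\pi$ gets absorbed into the $\ll$), each term contributes exactly when $a_1+a_3 = a_2+a_4$, which is precisely the equation counted by $\E(A)$.

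Thus the left-hand side equals $\E(A)$ up to a multiplicative constant, and applying Theorem \ref{t:convex_energy} immediately yields the stated bound
$$
\int_0^{2\pi} |P_A(\theta)|^4 \, d\theta \ll \E(A) \ll |A|^{\frac{32}{13}} \log^{\frac{71}{65}} |A|\,.
$$

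There is no real obstacle here: the corollary is a one-line consequence of Parseval's identity combined with the theorem just proved. The only thing worth being careful about is the bookkeeping of the factor $2\pi$ coming from the integration interval, but since the inequality is stated with $\ll$ this does not affect the conclusion.
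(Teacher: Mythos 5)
Your argument is correct and is exactly the (implicit) one the paper intends: the corollary is stated without proof immediately after Theorem \ref{t:convex_energy}, and the only content is Parseval's identity/orthogonality turning $\int |P_A|^4$ into $\E(A)$ up to a bounded constant, after which the theorem gives the bound. Your remark about the $2\pi$ normalization is the right thing to note; since the integrand has period $1$ and is nonnegative, $\int_0^{2\pi}\le 7\int_0^1 = 7\E(A)$, which is harmless under $\ll$.
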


\begin{remark}
    The argument from the proof of Theorem \ref{t:convex_energy}
    is quite tight modulo our current knowledge of convex sets.
    Indeed, if one put $\tau=\D=K$ and, hence, by Lemma \ref{l:E_3_convex}
    we have $|D| \ll |A|^3 / K^3$
    then
    the estimate $K \gg |A|^{7/13 - }$ is obtained exactly.
    The same situation takes place in the case of multiplicative subgroups of $\Z/p\Z$, $p$ is a prime number
    (see \cite{s_ineq}),
    where
    the choice $\tau=\D=K$
    gives $K \gg |A|^{5/9 - }$.
\end{remark}

Probably, using similar arguments
one can obtain new upper bounds for $\T_k(A)$ as it was done in \cite{s_ineq}.
We do not make such calculations.
For $\T_k (A)$ weighted Szemer\'{e}di--Trotter theorem would
provide better bounds, probably.

\bigskip

Now we formulate a general result concerning the  additive energy of sets with small multiplicative doubling.

\begin{theorem}
    Let $A \subseteq \R$ be a set.
    Suppose that $|AA|=M|A|$, $M\ge 1$.
    Then
\begin{equation}\label{f:energy_gen}
    \E(A) \ll 
                            (M \log M)^{\frac{14}{13}}
                            |A|^{\frac{32}{13}} \log^{\frac{71}{65}} |A|
                \,.
\end{equation}
\label{t:energy_gen}
\end{theorem}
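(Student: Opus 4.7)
The plan is to carry out the same argument as in the proof of Theorem \ref{t:convex_energy}, replacing every appeal to Lemma \ref{l:E_3_convex} by the corresponding bound from Lemma \ref{l:arranging_gen} and tracking the resulting $M \log M$ factors through the chain of estimates. Setting $\E = \E(A) = |A|^3/K$ and using the arranging bound $(A \c A)(x_j) \ll (M \log M)^{2/3} |A| j^{-1/3}$, I dyadically decompose the level sets of $A \c A$ into buckets $D_j = \{ s : 2^{j-2}|A|K^{-1} < |A_s| \le 2^{j-1}|A|K^{-1} \}$ and pick by pigeonhole a single bucket $D = D_j$ on which $|D| \D^2 \gtrsim \E / L$, where $\D = 2^{j-1} |A| K^{-1}$ and $L \ll \log |A|$.

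Next I set $g(x) = (A \c A)(x) D(x)$ and introduce the operators $\oT_1 = \oT^g_A$, $\oT_2 = \oT^A_{A,D}$, and $\oT_3 = \oT^{A\c A}_A$. The entries of $\oT_1$ and of $(\oT_2)^* \oT_2$ are pointwise dominated by the entries of $\oT_3$, which is positive semidefinite; Perron--Frobenius applied to the test function $A/|A|^{1/2}$ gives $\mu_0(\oT_1) \ge |D| \D^2 / (4|A|) \ge \E/(16 L|A|)$. Proposition \ref{p:triangles_g} with $g_1 = g$ and $g_2 = A \c A$ yields
$$
\mu_0^3(\oT_1) \le \sum_{\alpha, \beta} g(\alpha) g(\beta) (A\c A)(\alpha - \beta) \Cf_3(A)(\alpha,\beta).
$$
Choosing a threshold $d$ so that the contribution from $(A\c A)(\alpha - \beta) < d$ is at most $\mu_0^3/2$ (via formulas (\ref{f:TT*}), (\ref{f:sum_squares_eigenvalues'_1}) together with the bound $\E_3(A) \ll (M \log M)^2 |A|^3 L$), then applying Cauchy--Schwarz against $\sum \Cf_3(A)^2(\alpha,\beta) = \E_3(A)$ and using $(A\c A)(\alpha) \le \D$ on $D$, I arrive at $\mu_0^6(\oT_1) \ll \E_3(A) \cdot \D^3 \cdot \sigma$, where $\sigma$ denotes the restricted sum $\sum D(\alpha)(A\c A)(\alpha) D(\beta)(A\c A)^2(\alpha-\beta)$ over pairs with $(A\c A)(\alpha - \beta) \ge d$.

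A further dyadic decomposition of $\sigma$ over levels $\tau = 2^i d$ of $(A\c A)(\alpha-\beta)$ reduces the task to bounding a single $\sigma_*$ with $\sigma \ll L \sigma_*$. Lemma \ref{l:arranging_gen} furnishes two bounds on $\sigma_*$: the first is $\sigma_* \ll \D \tau \E(D,A) \ll (M \log M) \D \tau |A| |D|^{3/2}$, and the second follows from Cauchy--Schwarz applied to $\sum_\alpha (A\c A)(\alpha)(D \c S_\tau)(\alpha) \le \sqrt{\E(A,D)\E(A,S_\tau)}$ combined with the level-set bound $|S_\tau| \ll (M \log M)^2 |A|^3 / \tau^3$, giving $\sigma_* \ll \tau^{-1/4} (M \log M)^{5/2} |A|^{13/4} |D|^{3/4}$, where $S_\tau = \{ x : \tau/2 < (A\c A)(x) \le \tau \}$. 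Optimizing $\tau$ produces $\sigma_* \ll \D^{1/5} (M \log M)^{11/5} |A|^{14/5} |D|^{9/10}$, hence $\mu_0^6(\oT_1) \ll (M\log M)^{21/5} L^2 |A|^{29/5} \D^{16/5} |D|^{9/10}$. Combining with $\mu_0(\oT_1) \ge |D|\D^2/(4|A|)$, multiplying through by $\D^{7/5}$ to convert this into an estimate on $(|D|\D^2)^{51/10}$, invoking $|D|\D^2 \ge \E/(4L)$, and finally eliminating $\D$ via $\D \le K = |A|^3/\E$ yields a closed-form inequality whose thirteenth root gives the claimed bound.

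The main obstacle is the bookkeeping of the four independent $(M\log M)$ factors entering the argument---one each from $\E_3(A)$, $\E(D,A)$, $|S_\tau|$, and $\E(A,S_\tau)$---which combine nontrivially through the optimization of $\tau$ and through the final algebraic solution for $K$. The logarithmic exponent $71/65$ is inherited unchanged from the convex case; only the $M \log M$ dependence requires new computation.
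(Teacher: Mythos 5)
Your argument reproduces the paper's proof faithfully up to the very last step, and the bookkeeping of the various $M\log M$ factors is handled correctly through the optimization of $\tau$: your intermediate bounds $\sigma_* \ll \D^{1/5}(M\log M)^{11/5}|A|^{14/5}|D|^{9/10}$ and $\mu_0^6(\oT_1)\ll(M\log M)^{21/5}L^2|A|^{29/5}\D^{16/5}|D|^{9/10}$ match the paper exactly. However, the closing step contains a genuine error. You eliminate $\D$ via the inequality $\D\le K$, but this is not available here. The upper bound on $\D$ comes from the dyadic range $j\in[l]$ with $2^l$ controlled by the level--set estimate from Lemma \ref{l:arranging_gen}: the tail $\sum_{|A_s|>\tau}|A_s|^2\ll(M\log M)^2|A|^3/\tau$, so the cutoff scale is $\tau\sim(M\log M)^2 K$ rather than $\tau\sim K$, and one only gets
$$
\D \ll (M\log M)^2 K \,.
$$
This extra $(M\log M)^2$ is precisely the fifth source of $M\log M$ losses, which you omit when you enumerate ``one each from $\E_3(A)$, $\E(D,A)$, $|S_\tau|$, and $\E(A,S_\tau)$.'' Concretely, inserting $\D\le K=|A|^3/\E$ into
$$
\E^{51/10}\ll L^{71/10}|A|^{59/5}\D^{7/5}(M\log M)^{21/5}
$$
gives $\E^{13/2}\ll|A|^{16}L^{71/10}(M\log M)^{21/5}$ and hence $\E\ll|A|^{32/13}L^{71/65}(M\log M)^{42/65}$, which is a strictly stronger exponent than the claimed $14/13=70/65$; this mismatch is a signal that $\D\le K$ is too strong. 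Using the correct $\D\ll(M\log M)^2 K$ brings in the extra $(M\log M)^{14/5}$ factor and yields $\E^{13/2}\ll|A|^{16}L^{71/10}(M\log M)^{7}$, i.e.\ $\E\ll|A|^{32/13}L^{71/65}(M\log M)^{14/13}$ as stated. The approach is otherwise the same as the paper's; you need only repair this last elimination.
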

\begin{proof}
Let $\E = \E(A) = |A|^3 / K$, $\E_3 = \E_3 (A)$, $L=\log |A|$.
By Lemma \ref{l:arranging_gen}, we have $\E_3 (A) \ll (M \log M)^2 \cdot |A|^3 \log |A|$.
Thus $\E_3 (A)$ is small for small $M$ and we can apply the arguments from the proof of
Theorem \ref{t:convex_energy}.
Using
the first estimate of Lemma \ref{l:arranging_gen},
we obtain
$$
    2^{-2} \E \le \sum_{j=1}^l \sum_{s\in D_j} |A_s|^2 \,,
$$
where
$$
    D_j = \{ s\in A-A ~:~ 2^{j-2} |A| K^{-1} < |A_s| \le 2^{j-1} |A| K^{-1} \} \,,
$$
$c>0$ is an absolute constant and $j \in [l]$, $2^l \ll (M\log M)^2 K^2 |A|^{-1}$.
By pigeonhole principle, we find $j\in [l]$ such that
\begin{equation}\label{tmp:17.11.2012_D&}
    2^{-2} l^{-1} \E  \le \sum_{s\in D_j} |A_s|^2 \,.
\end{equation}
Put $D=D_j$, $\Delta = 2^{j-1} |A| K^{-1}$, and $g(x) = (A\c A) (x) D(x)$.
After that apply arguments in lines (\ref{tmp:17.11.2012_D_pred})---(\ref{tmp:17.11.2012_5}),
considering the operators $\oT_1 = \oT^g_{A}$,
$\oT_2 = \oT^{A}_{A,D}$, $\oT_3 = \oT^{A\c A}_A$,
and also the upper bound for $\E_3$, we have
$$
    \mu^6_0 (\oT_1)
        \ll
            \E_3
                \sum_{\a \in D,\, \beta \in D ~:~ (A\c A) (\a-\beta) \ge d}
                    (A\c A)^2 (\a) (A\c A)^2 (\beta) (A\c A)^2 (\a-\beta)
$$
$$
    \ll
        |A|^3 M^2 (\log M)^2 L \D^3 \sum_{\a,\beta ~:~ (A\c A) (\a-\beta) \ge d} D(\a) (A\c A) (\a) D(\beta) (A\c A)^2 (\a-\beta)
            =
$$
$$
            =
               |A|^3 M^2 (\log M)^2 L \D^3 \cdot \sigma \,.
$$
As is Theorem \ref{t:convex_energy} the number $d$ can be taken as $d = \frac{\E^2}{32 L^2 |A| \E^{1/2}_3}$.
Using a consequence of the first estimate of Lemma \ref{l:arranging_gen},
namely, $|S_i| \ll (M\log M)^2 |A|^3 /(d^3 2^{3i})$,
the second bound from Lemma \ref{l:arranging_gen},
and the arguments from lines (\ref{tmp:20.11.2012_1})---(\ref{tmp:20.11.2012_2}), we obtain
$$
    \sigma \ll \min_\tau \{ \D \tau |A| |D|^{3/2} (M \log M) , \tau^{-1/4} |A|^{13/4} |D|^{3/4} (M \log M)^{5/2} \}
$$
$$
    \ll
        \D^{1/4} |A|^{14/5} |D|^{9/10} (M \log M)^{11/5} \,.
$$
Thus
$$
    \frac{\E^6}{|A|^6 L^6}
        \ll |A|^3 L^2 \D^3 (M \log M)^2 \cdot \Delta^{1/5} |A|^{14/5} |D|^{9/10} (M \log M)^{11/5} \,.
$$
Accurate computations as is Theorem \ref{t:convex_energy} show
$$
    \left( \frac{\E}{|A| L} \right)^{51/10}
        \ll |A|^{29/5 + 9/10} L^2 \D^{7/5} (M \log M)^{21/5} \,.
$$
Using estimate $\D \ll K (M\log M)^2$ after some calculations, we obtain
$$
    K \gg |A|^{7/13} L^{-71/65} (M \log M)^{-14/13}
$$
as required.
$\hfill\Box$
\end{proof}

It is easy to check that Theorem \ref{t:energy_gen}
gives better bound for the additive energy then the bound from Lemma \ref{l:arranging_gen},
namely $\E(A) \ll M \log M |A|^{5/2}$ if, roughly, $M \ll |A|^{1/2-}$.

In \cite{s_ineq} the following theorem of the same type was obtained.

\begin{theorem}
    Let $A \subseteq \R$ be a set, and $\eps \in [0,1)$ be a real number.
    Suppose that $|AA|=M|A|$, $M\ge 1$, and
    \begin{equation}\label{f:S_eps&}
        |\{ x \neq 0 ~:~ (A\c A) (x) \ge |A|^{1-\eps} \}|
            \ll
                (M \log M)^{\frac{5}{3}} |A|^{\frac{1}{6} - \frac{\eps}{4}} \log^{\frac{5}{6}} |A| \,.
    \end{equation}
    Then
\begin{equation}\label{f:energy_gen&}
    \E(A) \ll 
                            M \log M
                            |A|^{\frac{5}{2} - \frac{\eps}{12}} \log^{\frac{1}{2}} |A|
                \,.
\end{equation}
\label{t:energy_gen&}
\end{theorem}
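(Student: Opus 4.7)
The plan is to follow the same backbone as the proofs of Theorem \ref{t:convex_energy} and Theorem \ref{t:energy_gen}, and to exploit hypothesis (\ref{f:S_eps&}) precisely at the step where a covering bound for the level sets of $(A\c A)$ is invoked. Put $\E = \E(A) = |A|^3/K$ and $L = \log|A|$. By Lemma \ref{l:arranging_gen} we still have $\E_3(A) \ll (M\log M)^2 |A|^3 L$, so the starting dyadic pigeonhole gives a set
$$
    D = D_j = \{ s\in A-A ~:~ 2^{j-2}|A|K^{-1} < |A_s| \le 2^{j-1}|A|K^{-1} \}
$$
with $\Delta = 2^{j-1}|A|K^{-1}$ and $\sum_{s\in D}|A_s|^2 \gg \E/l$, where $l \ll \log((M\log M) K)$.

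With this $D$ I form $g(x) = (A\c A)(x) D(x)$ and consider the operators $\oT_1 = \oT^g_A$, $\oT_2 = \oT^A_{A,D}$, $\oT_3 = \oT^{A\c A}_A$ exactly as in the proof of Theorem \ref{t:energy_gen}. The dyadic pigeonhole and the nonnegativity of $\oT_3$ give $\mu_0(\oT_1) \gg \E/(|A|L)$, and Proposition \ref{p:triangles_g} applied with $g_1 = g$, $g_2 = A\c A$ yields
$$
    \mu_0(\oT_1)^3 \le \sum_{\a,\beta}g(\a)g(\beta)(A\c A)(\a-\beta)\,\Cf_3(A)(\a,\beta).
$$
Exactly as before, the factor $\oT_2(\oT_2)^*$ lets me restrict to $(A\c A)(\a-\beta) \ge d$ with $d = \E^2/(32L^2|A|\,\E_3^{1/2})$, and Cauchy--Schwarz together with the bound on $\E_3$ gives
$$
    \mu_0(\oT_1)^6 \ll (M\log M)^2 |A|^3 L \Delta^3 \cdot \sigma,
$$
where $\sigma$ is a sum over $\a,\beta \in D$ weighted by $(A\c A)(\a)(A\c A)(\beta)(A\c A)^2(\a-\beta)$ restricted to $(A\c A)(\a-\beta) \ge d$.

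The heart of the argument is the estimate for $\sigma$. Decomposing dyadically in $\tau = (A\c A)(\a-\beta)$ via $S_\tau = \{x : \tau < (A\c A)(x) \le 2\tau\}$, I bound $\sigma$ by the minimum of two estimates, as in lines (\ref{tmp:17.11.2012_3})--(\ref{tmp:20.11.2012_2}): one using $\E(D,A) \ll |A||D|^{3/2}(M\log M)$ from Lemma \ref{l:arranging_gen}, and one using $\sum_\a (A\c A)(\a)(D\c S_\tau)(\a) \ll |A||D|^{3/4}|S_\tau|^{3/4}(M\log M)$. The new ingredient compared to Theorem \ref{t:energy_gen} is that the domain of interest $\tau \ge d$ can be forced to satisfy $\tau \ge |A|^{1-\eps}$ (one first checks that if $K$ violates the desired lower bound, then $d \ge |A|^{1-\eps}$, making (\ref{f:S_eps&}) applicable), so that $|S_\tau|$ is replaced by the much smaller quantity from hypothesis (\ref{f:S_eps&}) instead of the crude $|S_\tau| \ll (M\log M)^2|A|^3/\tau^3$.

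The main obstacle is the bookkeeping at the optimization step: I must plug the improved $|S_\tau|$ into the second bound for $\sigma$, re-optimize the min over $\tau$, and propagate the $\eps$-gain through the estimate $\Delta \ll K(M\log M)^2$ and through the inequality $\mu_0(\oT_1)^6 \gg \E^6/(|A|L)^6$. The $\eps/12$ exponent in the conclusion reflects the way the saving $|A|^{-\eps/4}$ from (\ref{f:S_eps&}) is diluted through the sixth-power dependence in $\mu_0^6$ and the further $51/10$-root when solving for $\E$; verifying that the arithmetic of these exponents indeed lands at $5/2 - \eps/12$ and $M\log M \cdot L^{1/2}$ (rather than the $(M\log M)^{14/13} L^{71/65}$ of Theorem \ref{t:energy_gen}) is the delicate computational check, but it is mechanical once the structure above is in place.
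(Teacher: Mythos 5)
The paper does not actually prove Theorem~\ref{t:energy_gen&}: it is stated as a result imported from \cite{s_ineq} (the sentence immediately preceding it reads ``In \cite{s_ineq} the following theorem of the same type was obtained''). So there is no in-paper proof to compare against, and your sketch must be judged on its own. Your general plan---rerun the machinery of Theorem~\ref{t:energy_gen} and use hypothesis~(\ref{f:S_eps&}) to replace the crude bound $|S_\tau|\ll (M\log M)^2|A|^3\tau^{-3}$---is the natural starting point, but two steps in it do not hold up. First, the claim that ``if $K$ violates the desired lower bound, then $d\ge |A|^{1-\eps}$'' is not correct in general. You are using the formula $d=\E^2/(32L^2|A|\,\E_3^{1/2})$ from the proof of Theorem~\ref{t:energy_gen}, but checking the contradiction argument (via $\mu_0\ge Q/|A|$, $|D|\D^2\le 4Q$, $Q\gg\E/L$) shows the correct threshold is $d\sim\E^2/(L^2|A|^3\E_3^{1/2})$, as in Theorem~\ref{t:convex_energy}; the $|A|$ in Theorem~\ref{t:energy_gen} is a typo. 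With this $d$ one gets $d\gg |A|^{3/2}K^{-2}L^{-5/2}(M\log M)^{-1}$, and the assumption $K\ll |A|^{1/2+\eps/12}(M\log M)^{-1}L^{-1/2}$ gives $d\ge |A|^{1-\eps}$ only when $|A|^{1/4-5\eps/12}L^{3/4}\ll (M\log M)^{1/2}$, which fails for $\eps<3/5$ and $M$ not large. So you cannot simply force the whole relevant range of $\tau$ to satisfy $\tau\ge|A|^{1-\eps}$; a genuine two-regime split of $\tau\in[d,|A|^{1-\eps})$ versus $\tau\ge|A|^{1-\eps}$ is needed, with a separate argument for the low range.

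Second, and more structurally, when you do restrict to $\tau\ge|A|^{1-\eps}$, the hypothesis gives $|S_\tau|$ \emph{constant in $\tau$} rather than decaying like $\tau^{-3}$, so the competing bound $b_2=\tau^2|A||D|^{3/4}|S_\tau|^{3/4}(M\log M)$ now \emph{increases} in $\tau$, as does $b_1=\D\tau|A||D|^{3/2}(M\log M)$. The $\min$--$\max$ in $\tau$ that produced $\D^{1/5}|A|^{14/5}|D|^{9/10}$ in Theorem~\ref{t:energy_gen} therefore behaves differently: the worst-case $\tau$ is no longer the crossover of an increasing and a decreasing curve, and one must worry about the top levels $\tau\sim\max_x(A\c A)(x)$ (including the diagonal $\a=\beta$ term). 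You defer this entirely to ``mechanical bookkeeping,'' but running the exponent arithmetic with your scheme does not land at $|A|^{5/2-\eps/12}(M\log M)\log^{1/2}|A|$; in particular the clean first power of $M\log M$ in the conclusion does not survive the intermediate losses $(M\log M)^2\cdot(M\log M)^{11/5}$ and the substitution $\D\ll K(M\log M)^2$. Something in the proof of \cite{s_ineq} must differ more substantively (e.g.\ a different choice of what to drop in the $\Cf_3$ inequality, or a direct use of (\ref{f:S_eps&}) on $D$ rather than on $S_\tau$). The idea of exploiting (\ref{f:S_eps&}) at the level-set stage is the right instinct, but the sketch as written has gaps that would need to be closed before the exponents can be trusted.
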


Thus, our Theorem \ref{t:energy_gen} is better than Theorem \ref{t:energy_gen&}
if, roughly speaking, $M\ll |A|^{\frac{1}{2} - \frac{13\eps}{12}}$.
The advantage of Theorem \ref{t:energy_gen} is the absence of $\eps$,
or, in other words, the absence more or less uniform upper bound for
the convolution of $A$, of course.

\bigskip

Apply arguments of the proof of Theorem \ref{t:energy_gen} for a new family of sets $A$ with small quantity $|A(A+1)|$.
Such sets were considered in \cite{A(A+1)}, where
the following lemma was proved.

\begin{lemma}
    Let $A,B\subseteq \R$ be two sets, and $\tau \le |A|,|B|$ be a parameter.
    Then
\begin{equation}\label{}
    | \{ s\in AB ~:~ |A\cap sB^{-1}| \ge \tau \} | \ll \frac{|A(A+1)|^2 |B|^2}{|A| \tau^3} \,.
\end{equation}
\label{l:arranging_product}
\end{lemma}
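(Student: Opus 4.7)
Put $S_\tau := \{ s \in AB : |A \cap sB^{-1}| \ge \tau \}$ and, for each $s \in S_\tau$, let $A_s := A \cap sB^{-1}$; then $|A_s| \ge \tau$, and every $a \in A_s$ corresponds to a unique $b = s/a \in B$. I plan to bound the count of quadruples $(s, a, a', a'') \in S_\tau \times A_s \times A_s \times A_s$, which equals $\sum_{s \in S_\tau} |A_s|^3 \ge |S_\tau| \tau^3$, by injecting them into the product set $B \times B \times A(A+1) \times A(A+1)$.

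The injection is
$$\Phi(s, a, a', a'') := \bigl( s/a',\ s/a'',\ a(a'+1),\ a(a''+1) \bigr).$$
Given an image tuple $(b', b'', w_1, w_2)$, the equations $s = a'b'$ together with $a' = w_1/a - 1$ give $s = w_1 b'/a - b'$, and similarly $s = w_2 b''/a - b''$. Equating these two expressions is linear in $1/a$, so in the generic case $b' \neq b''$ we recover $a = (w_1 b' - w_2 b'')/(b' - b'')$ uniquely; then $s$, $a'$, $a''$ are all determined. The degenerate locus $b' = b''$ forces $a' = a''$ and contributes only a triple-count term, absorbable into a lower order. Hence each image point has $O(1)$ preimages, yielding
$$\sum_{s \in S_\tau} |A_s|^3 \ll |B|^2\, |A(A+1)|^2, \qquad \text{so} \qquad |S_\tau| \ll \frac{|B|^2\, |A(A+1)|^2}{\tau^3}.$$

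To recover the extra factor $|A|^{-1}$ present in the stated bound, I would then apply the Pl\"unnecke--Ruzsa inequality to the pair $(A, AA)$: since $|A + AA| = |A(A+1)| = K |A|$ with $K := |A(A+1)|/|A|$, one obtains $|AA| \le K |A|$, placing $A$ in the regime of small multiplicative doubling. A logarithmic change of variables then converts the multiplicative convolution $|A \cap sB^{-1}|$ into an additive convolution of $\log A$ with $\log B$; a sorting inequality in the spirit of Lemma \ref{l:arranging_gen} applied in these coordinates then yields $(K \log K)^2 |A| |B|^2/\tau^3 = (\log K)^2 |A(A+1)|^2 |B|^2/(|A| \tau^3)$, the logarithmic factor being absorbed into $\ll$. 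The main obstacle is exactly this last step: the elementary 4-tuple injection cleanly produces the shape $|B|^2 |A(A+1)|^2/\tau^3$, and extracting the additional saving of $|A|^{-1}$ requires the Pl\"unnecke reduction combined with the delicate logarithmic transfer to the Lemma \ref{l:arranging_gen}-style sorting inequality.
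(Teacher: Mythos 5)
This lemma is quoted from Jones and Roche--Newton (reference \cite{A(A+1)}); the present paper does not reprove it, so I am comparing your argument against the standard proof in that reference.

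Your injection
$\Phi(s,a,a',a'') = (s/a',\, s/a'',\, a(a'+1),\, a(a''+1))$
is correct and genuinely injective off the diagonal $b'=b''$ (the diagonal contributes $\sum_s |A_s|^2 \le \tau^{-1}\sum_s|A_s|^3$, absorbable once $\tau\ge 2$), so it does give
$$\sum_{s\in S_\tau}|A_s|^3 \ll |B|^2\,|A(A+1)|^2, \qquad |S_\tau| \ll \frac{|B|^2\,|A(A+1)|^2}{\tau^3}.$$
But the lemma you need is stronger by the factor $|A|^{-1}$, and that is exactly where your argument breaks. Your plan for recovering the missing $|A|$ has two concrete errors. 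First, the identity $|A+AA|=|A(A+1)|$ is false: $A(A+1)=\{aa'+a\}$ uses the \emph{same} $a$ in both summands, so $A(A+1)\subseteq AA+A$ but in general it is a much smaller set; you cannot infer small multiplicative doubling of $A$ from control on $|A(A+1)|$. Second, even if you could pass to the regime $|AA|\ll K|A|$, the logarithmic change of variables plus a Lemma~\ref{l:arranging_gen}--style sorting bound would produce a rich--values estimate in terms of $|AA|$, not of $|A(A+1)|$, so it would prove a different lemma. You flag this step as "the main obstacle"; it is indeed a genuine gap, not a detail.

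The actual proof does not use an elementary injection at all; it uses the Szemer\'edi--Trotter theorem with a family of curves that bakes in an extra copy of $A$. Take the point set $P = A(A+1)\times B$ of size $|A(A+1)||B|$, and for each pair $(a_0,s)\in A\times S_\tau$ the hyperbola
$$\gamma_{a_0,s}:\quad y\,(x-a_0)=s\,a_0.$$
For every $a'\in A_s$ the point $\big(a_0(a'+1),\, s/a'\big)$ lies on $\gamma_{a_0,s}$ and belongs to $P$, so each of the $|A|\,|S_\tau|$ curves is $\tau$--rich in $P$; two such hyperbolas meet in at most one point (the intersection equation is linear in $x$), so they form a pseudoline family. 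The $k$--rich form of Szemer\'edi--Trotter then yields
$$|A|\,|S_\tau| \;\ll\; \frac{|A(A+1)|^2|B|^2}{\tau^3} + \frac{|A(A+1)||B|}{\tau},$$
and the second term is dominated by the first because $\tau^2\le |A||B|\le |A(A+1)||B|$. This gives the stated bound with the $|A|^{-1}$. The essential idea you are missing is the auxiliary parameter $a_0\in A$ multiplying the curve family, which is what supplies the extra $|A|$ --- something no re-weighting of your $4$--tuple injection can reproduce, since a fifth variable $a_0$ would leave the map from $5$--tuples to $B^2\times A(A+1)^2$ with $|A_s|$ preimages per image.
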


Lemma above implies that for any $A\subseteq \R$
the following holds
$\E^\m (A) \ll |A(A+1)| |A|^{3/2}$.
Also in \cite{A(A+1)} a series of interesting inequalities were obtained.
Here we formulate just one result.

\begin{theorem}
    Let $A\subseteq \R$ be a set.
    Then
    $$
        \E^\m (A,A(A+1)) \,, ~ \E^\m (A+1,A(A+1)) \ll |A(A+1)|^{5/2} \,.
    $$
\label{t:A(A+1)_old}
\end{theorem}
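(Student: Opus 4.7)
The plan is to bound both multiplicative energies by a dyadic decomposition of the level sets of the multiplicative convolution, controlled by Lemma~\ref{l:arranging_product}. Write $B = A(A+1)$ and set $r(s) = |A \cap s B^{-1}|$, so that
\[
\E^\m(A, B) = \sum_s r(s)^2 \ll \sum_\tau \tau^2 |S_\tau| \,,
\]
where $S_\tau = \{s : r(s) \sim \tau\}$ and the outer sum runs over dyadic $\tau \le |A|$.

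Two bounds on $|S_\tau|$ combine to give the result. The trivial estimate $\sum_s r(s) = |A||B|$ yields $|S_\tau| \le |A||B|/\tau$, while Lemma~\ref{l:arranging_product}, applied with our $A$ and with $B = A(A+1)$, gives $|S_\tau| \ll |A(A+1)|^2 |B|^2/(|A|\tau^3) = |B|^4/(|A|\tau^3)$. The two bounds balance at $\tau_0 := |B|^{3/2}/|A|$. Using the trivial bound for $\tau \le \tau_0$ and the lemma for $\tau > \tau_0$, both partial sums are geometric and are dominated by the crossover value
\[
\tau_0^2 |S_{\tau_0}| \ll |A||B|\tau_0 = |B|^{5/2} \,.
\]
(The edge case $\tau_0 > |A|$, which occurs when $|B| > |A|^{4/3}$, is handled by using the trivial bound throughout and noting $|A|^2 |B| \le |B|^{5/2}$ in that regime.) This establishes the first inequality $\E^\m(A, A(A+1)) \ll |A(A+1)|^{5/2}$.

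For $\E^\m(A+1, A(A+1))$ the dyadic argument is identical provided one has the companion estimate
\[
|\{s : |(A+1) \cap s B^{-1}| \ge \tau\}| \ll |A(A+1)|^4/(|A|\tau^3) \,,
\]
with the same factor $|A(A+1)|$ on the right, rather than the $|(A+1)(A+2)|$ that a naive substitution of $A+1$ into Lemma~\ref{l:arranging_product} would produce. This is the main obstacle. The fix I would pursue is to re-run the Szemer\'edi--Trotter argument behind Lemma~\ref{l:arranging_product} using the symmetry $A(A+1) = (A+1) \cdot A$: the relevant slope set is still $A(A+1)$ whether the incidence configuration is organized around multiplication by $A$ or by $A+1$, so the same factor $|A(A+1)|^2$ appears in the numerator. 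Once this variant is in hand, the dyadic balancing is verbatim and delivers $|A(A+1)|^{5/2}$; no further work is needed for the second bound.
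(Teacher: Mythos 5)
The paper records Theorem~\ref{t:A(A+1)_old} as a result of Jones and Roche--Newton from~\cite{A(A+1)} and does not prove it; the only ingredient it supplies is Lemma~\ref{l:arranging_product}. Your argument for the first bound $\E^\m(A,A(A+1))\ll|A(A+1)|^{5/2}$ is correct and is the natural way to extract it from that lemma: the identity $\E^\m(A,B)=\sum_s r(s)^2$, the dyadic decomposition, the two competing upper bounds on $|S_\tau|$ (the trivial $L^1$ bound and Lemma~\ref{l:arranging_product} with $B=A(A+1)$), the balance at $\tau_0=|B|^{3/2}/|A|$, and the edge case $\tau_0>|A|$ are all handled properly.

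The second bound is where you have a real gap, and you correctly flag it: a naive substitution of $A+1$ into Lemma~\ref{l:arranging_product} produces the uncontrolled factor $|(A+1)(A+2)|^2$. Your proposed repair, however, is to rederive the underlying Szemer\'edi--Trotter estimate from scratch using the tautological identity $A(A+1)=(A+1)A$; since this is not actually carried out, the proof as written is incomplete. There is a much cleaner fix that does not require reopening the incidence argument at all: apply Lemma~\ref{l:arranging_product} to the reflected set $A'=-(A+1)$. Then $A'(A'+1)=(-A-1)(-A)=(A+1)A=A(A+1)$, $|A'|=|A|$, and for each $s$ one has $|A'\cap sB^{-1}|=|(A+1)\cap(-s)B^{-1}|$; relabelling $s\mapsto -s$, the lemma gives exactly the companion estimate $|\{s:|(A+1)\cap sB^{-1}|\ge\tau\}|\ll |A(A+1)|^2|B|^2/(|A|\tau^3)$, after which your dyadic balancing goes through verbatim and yields the second inequality.
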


In \cite{s_ineq} all bounds of Theorem \ref{t:A(A+1)_old} were improved,
provided by an analog of inequality (\ref{f:S_eps&}) holds.
We prove the following result, having no such condition.

\begin{corollary}
    Let $A\subseteq \R$ be a set, $a\in \R$ be a number, $|A(A+1)| = M|A|$, $M\ge 1$.
    Then
    \begin{equation}\label{f:E^m_A(A+1)-}
        \E^\m (A,A+a) \ll
                        M^{\frac{14}{13}}
                            |A|^{\frac{32}{13}} \log^{\frac{71}{65}} |A|
    \end{equation}
    In particular
    \begin{equation}\label{f:E^m_A(A+1)}
        \E^\m (A) \ll
                        M^{\frac{14}{13}}
                            |A|^{\frac{32}{13}} \log^{\frac{71}{65}} |A|
    \end{equation}
\label{c:f:E^m_A(A+1)}
\end{corollary}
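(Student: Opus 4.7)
The strategy is to run the argument of Theorem \ref{t:energy_gen} verbatim, but in the multiplicative group $\R^*$, with Lemma \ref{l:arranging_product} playing the role of Lemma \ref{l:arranging_gen}. Since the operator machinery of Section \ref{sec:eigenvalues} works in any abelian group, we may form the operators $\oT^{g}_A$ and $\t{\oT}^{g}_A$ using the multiplicative convolution $\c^\m$, and all of Proposition \ref{p:triangles_g}, Lemma \ref{l:eigenvalues_D,S} and the identities (\ref{f:TT*})--(\ref{f:T*T_tilde}) transfer with no change. The assumption $|A(A+1)| = M|A|$ enters only through Lemma \ref{l:arranging_product}, which importantly holds with any second set $B$, and in particular with $B = A + a$.

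The first step is to derive the two multiplicative analogues of Lemma \ref{l:arranging_gen}. Applied with $B = A+a$, Lemma \ref{l:arranging_product} gives, after arranging $(A\c^\m (A+a))(x_1) \ge (A\c^\m (A+a))(x_2) \ge \ldots$,
\begin{equation*}
  (A\c^\m (A+a))(x_j) \ll M^{2/3}|A|j^{-1/3},
\end{equation*}
and summing the cube of this bound down to $(A\c^\m (A+a))(x_j) \sim 1$ yields $\E^\m_3(A,A+a) \ll M^2|A|^3\log|A|$. Notice that, because the bound in Lemma \ref{l:arranging_product} carries no extra $\log M$ factor, the corresponding logarithmic weights will be slightly cleaner than in Theorem \ref{t:energy_gen}. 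Put $\E^\m = \E^\m(A,A+a) = |A|^3/K$, $L = \log|A|$, and run the dyadic decomposition of lines (\ref{tmp:17.11.2012_1})--(\ref{tmp:17.11.2012_D&}): there exist $\Delta$ with $|A|/K \ll \Delta \ll K$ and a level set
$D = \{s : (A\c^\m(A+a))(s) \asymp \Delta\}$
whose contribution to $\E^\m$ is at least $\E^\m / (4\log(M^2 K^2/|A|))$.

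The second step is the central application of Proposition \ref{p:triangles_g}. Define $g(x) = (A\c^\m(A+a))(x)\, D(x)$ and consider the operators $\oT_1 = \oT^{g}_{A}$, $\oT_2 = \oT^{A}_{A,D}$, $\oT_3 = \oT^{A\c^\m A}_{A}$ in $\R^*$. As in Theorem \ref{t:energy_gen} we have the pointwise domination $\oT_1 \le \oT_3$ and $(\oT_2)^*\oT_2 \le \oT_3$ and $\oT_3$ is positive semidefinite. The popularity of $D$ forces $\mu_0(\oT_1) \gg \E^\m/(L|A|)$, and Proposition \ref{p:triangles_g} gives
\begin{equation*}
  \mu_0^3(\oT_1) \le \sum_{\alpha,\beta} g(\alpha)g(\beta)(A\c^\m A)(\alpha\beta^{-1}) \Cf_3(A)(\alpha,\beta).
\end{equation*}
Truncating $(A\c^\m A)(\alpha\beta^{-1}) \ge d$ with $d \sim (\E^\m)^2 L^{-2}|A|^{-3}(\E^\m_3)^{-1/2}$ as in Theorem \ref{t:convex_energy} (using formulas (\ref{f:TT*}) and (\ref{f:sum_squares_eigenvalues'_1}) together with $(\oT_2)^*\oT_2 \le \oT_3$), applying Cauchy--Schwarz and plugging in the bound on $\E^\m_3$ reduces the task to controlling
\begin{equation*}
  \sigma = \sum_{\alpha,\beta:\,(A\c^\m A)(\alpha\beta^{-1})\ge d} D(\alpha)(A\c^\m(A+a))(\alpha)\,D(\beta)(A\c^\m(A+a))^2(\alpha\beta^{-1}).
\end{equation*}

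The third step is the two-way bound on $\sigma$, mirroring (\ref{tmp:17.11.2012_3})--(\ref{tmp:20.11.2012_2}). Level-setting $(A\c^\m A)$ at scale $\tau = 2^i d$ and using, in place of Lemma \ref{l:E_3_convex}, the two consequences of Lemma \ref{l:arranging_product}, namely $\E^\m(D,A) \ll M|A||D|^{3/2}$ and $|S_\tau| \ll M^2|A|^3\tau^{-3}$, gives
\begin{equation*}
  \sigma \ll \min_\tau\bigl\{\Delta\tau|A||D|^{3/2} M,\; \tau^{-1/4}|A|^{13/4}|D|^{3/4} M^{5/2}\bigr\} \ll \Delta^{1/5}|A|^{14/5}|D|^{9/10} M^{11/5}.
\end{equation*}
Chaining this into the lower bound on $\mu_0(\oT_1)$ and using $|D|\Delta^2 \le \E^\m_3 \ll M^2|A|^3 L$ leads, after the same rearrangement as in the proof of Theorem \ref{t:energy_gen}, to $K \gg M^{-14/13}|A|^{7/13} L^{-71/65}$, which is the claim.

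The main technical hazard is simply bookkeeping: since Lemma \ref{l:arranging_product} has the exponent $|A(A+1)|^2$ (not $|AA|^2 (\log|AA|)^2$), the powers of $M$ in the two competing estimates for $\sigma$ shift relative to Theorem \ref{t:energy_gen}, and one must verify that they still conspire to give the exponent $M^{14/13}$ after optimization. A secondary point to check is that the minor parameter $a$ never enters except through $|A+a| = |A|$, so the bound $\E^\m(A, A+a)$ really is uniform in $a$; specializing $a = 0$ (or any convenient translate) then yields the stated bound on $\E^\m(A)$.
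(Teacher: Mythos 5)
Your proof follows exactly the paper's (very terse) approach: replace the additive convolution by the multiplicative one, feed the arrangement bound of Lemma \ref{l:arranging_product} into the role played by Lemma \ref{l:arranging_gen}, and rerun the argument of Theorem \ref{t:energy_gen}, observing that $a$ enters only through $|A+a|=|A|$. One small point worth flagging: the pointwise domination $\oT_1\le\oT_3$ is literally clear when $g=(A\c^\m A)D$, which is what the paper's phrase ``the convolution is $\{a_1,a_2\in A: x=a_1a_2^{-1}\}$'' indicates; if instead $g=(A\c^\m(A+a))D$ as you write, that entrywise inequality against $\oT_3=\oT^{A\c^\m A}_A$ is not automatic and needs a Cauchy--Schwarz (or a reduction of $\E^\m(A,A+a)$ to the diagonal energies) to justify, but this is a bookkeeping detail rather than a different method.
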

\begin{proof}
Put $A'=A+a$, and now the convolution is the cardinality of the set
$
    \{ a_1,a_2 \in A ~:~ x= a_1 a^{-1}_2 \} \,.
$
Lemma \ref{l:arranging_product} implies that $\E^\m_3 (A') \ll M^2 |A|^3 \log |A|$.
After that apply the arguments from the proof of Theorem \ref{t:energy_gen}.
$\hfill\Box$
\end{proof}

\section{Structural results}
\label{sec:applications2}


Previous results of section \ref{sec:applications}
say, basically, that if $\E_3 (A)$ is small
and
$A$ has some additional properties, which show that $A$ is "unstructured"\, in some sense
then
we can say something nontrivial about the additive energy of $A$.
Now we formulate (see Theorem \ref{t:E_3_M} below)
a variant of the principle using just smallness of $\E_3 (A)$
to show that $A$ has a structured subset.
There are several results of such type, see \cite{ss_E_k}, \cite{s_ineq}.
Our new theorem is the strongest one in the sense that its has minimal requirements.
From some point of view
these type of statements can be  called an optimal version of
Balog--Szemer\'{e}di--Gowers theorem, see \cite{ss_E_k}.

\begin{theorem}
    Let $A\subseteq \Gr$ be a set, $\E (A) = |A|^{3}/K^{}$,
    and $\E_3 (A) = M|A|^4 / K^2$.
    Then there is a set $A' \subseteq A$
    such that
    \begin{equation}\label{f:E_3_size}
        |A'| \gg  M^{-10} \log^{-15} M \cdot |A|  \,,
    \end{equation}
    and
    \begin{equation}\label{f:E_3_doubling}
        |nA'-mA'| \ll (M^{9} \log^{14} M)^{6(n+m)} K |A'|
    \end{equation}
    for every $n,m\in \N$.
    Moreover, if $s\in (1,3)$ is a real number,
    and we have the following condition $\E_s (A) = |A|^{s+1}/K^{s-1}$
    then for all $s \in (1,3/2]$
    there is a set $A' \subseteq A$
    such that
    \begin{equation}\label{f:E_3_size_s}
        |A'| \gg  M^{-(14-4s)/(3-s)} (s-1)^{21} \log^{-21} (M(s-1)^{-1}) \cdot |A|  \,,
    \end{equation}
    and
    \begin{equation}\label{f:E_3_doubling_s}
        |nA'-mA'| \ll (M^{5} (s-1)^{-20} \log^{20} (M(s-1)^{-1}))^{6(n+m)} K |A'| \,.
    \end{equation}
    Finally, if $s\in [3/2,3)$ then
    there is a set $A' \subseteq A$
    such that
    \begin{equation}\label{f:E_3_size_ss}
        |A'| \gg  M^{-(44-24s)/(3-s)} (3-s)^{21} \log^{-21} M \cdot |A|  \,,
    \end{equation}
    and
    \begin{equation}\label{f:E_3_doubling_ss}
        |nA'-mA'| \ll (M^{(45-25s)/(3-s)} (3-s)^{-20} \log^{20} M )^{6(n+m)} K |A'|
    \end{equation}
    for every $n,m\in \N$.
\label{t:E_3_M}
\end{theorem}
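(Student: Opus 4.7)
The plan is to adapt the eigenvalues method of Section \ref{sec:eigenvalues} to the assumption $\E_3(A) \le M|A|^4/K^2$. The idea is to isolate by dyadic pigeonholing a shell of popular differences on which $A \c A$ is essentially constant, build a hermitian nonnegative operator from that shell, extract the principal eigenfunction (nonnegative by Perron--Frobenius, Theorem \ref{t:Perron-Frobenius}), and read the structured subset $A'$ off an appropriate level set of that eigenfunction.

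First I would perform a double dyadic pigeonhole exactly as in the proof of Theorem \ref{t:convex_energy}. Dropping $s$ with $(A \c A)(s) \le |A|/(2K)$ loses at most half of $\E(A)$, so there is a shell $D = \{s : \Delta < (A \c A)(s) \le 2\Delta\}$, $\Delta \gg |A|/K$, with $|D|\Delta^2 \gg |A|^3/(K\log|A|)$. Combining this lower bound with the upper bound $|D|\Delta^3 \le \E_3(A) \ll M|A|^4/K^2$ pins $\Delta$ in a narrow range and forces $|D| \ll MK$. Set $g(x) = (A \c A)(x)\, D(x)$ and consider the hermitian nonnegative operator $\oT = \oT^g_A$. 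Testing its Rayleigh quotient against $A/|A|^{1/2}$ shows $\mu_0 \gg |D|\Delta^2/|A| \gg |A|^2/(K\log|A|)$, while the identity $\sum_\a \mu_\a^2 = \sum_{x \in D}(A \c A)^3(x) \le \E_3(A)$ forces $\mu_0 \le M^{1/2}|A|^2/K$, so the principal eigenvalue is pinned up to a factor $M^{1/2}\log|A|$.

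Next I would extract $A'$ from the principal eigenfunction $f_0 \ge 0$. Lemma \ref{l:g_bound} yields $\|f_0\|_\infty \ll \|g\|_2/\mu_0$ and $(\sum_x f_0)^2 \gg \mu_0/\|g\|_\infty \gg \mu_0/\Delta$, so a third dyadic pigeonhole over the values of $f_0$ produces a level set $A' \subseteq A$ of size $|A'| \gg M^{-10}(\log M)^{-15}|A|$, matching (\ref{f:E_3_size}). To control $|nA'-mA'|$ I would exploit the eigen-equation $\mu_0 f_0(x) = \sum_y g(x-y) f_0(y)$: since $f_0$ is essentially constant on $A'$, for most $x \in A'$ the difference $x-y$ lies in $D$ for a large proportion of $y \in A'$, so $A'-A'$ is concentrated on $D$, which has cardinality $\ll MK$. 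This yields $|A'-A'| \ll M^{O(1)} K |A'|$, and iterated Pl\"unnecke--Ruzsa then produces (\ref{f:E_3_doubling}) with the standard exponent $6(n+m)$. The constants $M^{9}\log^{14}M$ and $M^{-10}\log^{-15}M$, as opposed to the weaker $M^{O(1)}\log^{O(1)}|A|$, are obtained by first passing to a suitable tensor power $\oT^{\otimes t}$ via Lemma \ref{l:tensor_operator}, which widens the spectral gap and converts $\log|A|$ losses into $\log M$ losses at the cost of a polynomial increase in the exponent of $M$.

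The generalisation to arbitrary $s \in (1,3)$ runs the same programme with $g(x) = (A \c A)^{s-1}(x)\, D(x)$, but now the eigenvalue identity $\sum_\a \mu_\a^2 = \sum_{x \in D}(A \c A)^{2s-1}(x)$ and the two dyadic pigeonholes interact differently depending on the sign of $2s-3$: this causes the bifurcation at $s = 3/2$ and produces (\ref{f:E_3_size_s})--(\ref{f:E_3_doubling_s}) versus (\ref{f:E_3_size_ss})--(\ref{f:E_3_doubling_ss}), while the $(s-1)$ and $(3-s)$ factors reflect the narrowing of the dyadic pigeonhole window at the endpoints. The hardest part of the proof will be the careful bookkeeping of logarithmic and polynomial-in-$M$ losses through the three pigeonholes plus the tensor amplification so that they combine to the stated $M^{-10}\log^{-15}M$, and not something weaker: this precision is exactly what distinguishes Theorem \ref{t:E_3_M} from its predecessors in \cite{ss_E_k, s_ineq}.
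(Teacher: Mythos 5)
Your initial reduction (dyadic pigeonhole, fix a shell $D$ with $\Delta \gg |A|/K$ and $|D|\Delta^2 \gg |A|^3/K$ up to logs, upper bound $|D| \ll MK$ from $\E_3(A) \ll M|A|^4/K^2$, set $g = (A\c A)D$ and consider $\oT^g_A$) matches the paper exactly. But the second half of your argument diverges from the paper and contains a genuine gap. You propose to take $A'$ to be a level set of the principal eigenfunction $f_0$, and to bound $|nA'-mA'|$ from the observation that, via the eigen-equation, ``$A'-A'$ is concentrated on $D$.'' That inference is not valid: knowing that the convolution $(A'\c A')$ has most of its mass on a small set $D$ does not bound the cardinality of $A'-A'$ as a set (one can have most of the energy on a structured piece while the full difference set is huge), and it gives even less control on iterated sumsets $nA'-mA'$. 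There is no route from ``popular differences land in $D$'' to a Pl\"unnecke--Ruzsa hypothesis without an intermediate structural step.

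The paper handles this very differently. From the Rayleigh lower bound on $\mu_0(\oT^g_A)$ it goes to Corollary \ref{cor:mu_energy_mu_g} and Proposition \ref{p:triangles_g} to convert the eigenvalue information into a bound $\mu_0^6 \ll \E_3(A)\sum_{\a,\beta\in D} (A\c A)^2(\a)(A\c A)^2(\beta)(A\c A)^2(\a-\beta)$, then uses Cauchy--Schwarz to extract $\sum_x (A\c A)^2(x)(D\c D)(x) \le \E_3^{2/3}|D|^{1/3}\E(D)^{1/3}$, which yields a \emph{lower bound on $\E(D)$} of the form $\E(D) \gg |D|^3/(M^9\log^{14}M)$. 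Only then does it invoke Balog--Szemer\'edi--Gowers (Theorem \ref{t:BSzG}) to get $D'\subseteq D$ with $|D'|\gg\mu|D|$ and $|D'+D'|\ll\mu^{-6}|D'|$, applies Pl\"unnecke--Ruzsa to $D'$, and finally sets $A' = A\cap(D'+x)$ for a suitable translate $x$. The containment $nA'-mA'\subseteq nD'-mD'$ then gives (\ref{f:E_3_doubling}) for free. This BSzG step, which you omit entirely, is where the exponent $6(n+m)$ comes from. You also misattribute the $\log M$ (as opposed to $\log|A|$) losses to a tensor-power amplification: the paper does not tensorize in this proof; the $\log M$ factors arise simply because the number $l$ of dyadic shells is $\ll\log M^{1/(3-s)}$, since the assumption on $\E_3(A)$ caps how large $|A_s|$ can be relative to the popular threshold. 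To repair your argument you would need to insert the $\E(D)$ lower bound and the BSzG/Pl\"unnecke--Ruzsa step, and construct $A'$ by intersecting $A$ with a translate of the BSzG output set rather than by thresholding the eigenfunction.
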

\begin{proof}
Let
$\E_s = \E_s (A) = |A|^{s+1} / K^{s-1}$,
$\E_3 = \E_3 (A)$, $L=2(3-s)^{-1} \log (4M (s-1)^{-1})$.
Because of $\E_3$ is small we can apply the arguments from the proof of Theorem \ref{t:energy_gen}.
Write
$$
    D_j = \{ x\in A-A ~:~ 2^{j-2} |A| K^{-1} < |A_x| \le 2^{j-1} |A| K^{-1} \} \,.
$$
Trivially
$$
    |D_j| (2^{j-2} |A| K^{-1})^3 \le \E_3 \,,
$$
and whence
\begin{equation}\label{tmp:28.07.2012_1*}
    |D_j| \ll \E_3 / (|A|^3 K^{-3} 2^{3j}) \,.
\end{equation}
Thus
$$
    (s-1) \E_s  \ll \sum_{j=1}^l \sum_{s} |A_s|^s \,,
$$
where $l$ can be estimated as $\log M^{1/(3-s)} = L$.
By pigeonhole principle we find $j\in [l]$ such that
\begin{equation}\label{tmp:17.11.2012_D&}
    (s-1) L^{-1} \E_s  \ll \sum_{s\in D_j} |A_s|^s \,.
\end{equation}
Put $D=D_j$, $\Delta = 2^{j-1} |A| K^{-1}$, and $g(x) = (A\c A)^{s-1} (x) D(x)$.
From (\ref{tmp:17.11.2012_D&}) it follows that
\begin{equation}\label{tmp:17.11.2012_D_and_tilde}
    |D| \gg \frac{(s-1) |A|^{} K^{}}{L M^{s/(3-s)}}
\end{equation}
and
\begin{equation}\label{tmp:17.11.2012_D_and_tilde'}
            \sum_{x\in D} (A\c A) (x) \gg \frac{(s-1) |A|^2}{L M^{(s-1)/(3-s)}} \,.
\end{equation}
After that apply arguments in lines (\ref{tmp:17.11.2012_D_pred})---(\ref{tmp:17.11.2012_5}),
considering the operators $\oT_1 = \oT^g_{A}$,
$\oT_2 = \oT^{A}_{A,D}$, $\oT_3 = \oT^{A\c A}_A$.
Using Corollary \ref{cor:mu_energy_mu_g}, we get
\begin{equation}\label{tmp:TILDE}
    \langle \oT_3 f_0, f_0 \rangle \ge \frac{\mu^3_0 (\oT_1)}{\| g\|^2_2 \| g \|_\infty}
        \gg
            \frac{|D|^2 \D^3}{|A|^3} := \sigma \,.
\end{equation}
In the case $s=2$ as in Theorems \ref{t:convex_energy}, \ref{t:energy_gen}, we have $\sigma \ge \mu_0 (\oT_1)$.
Further,
by Proposition \ref{p:triangles_g}, we obtain
$$
    (s-1)^6 \mu^4_0 (\oT_1) \sigma^2
        \ll
            \E_3
                \sum_{\a \in D,\beta \in D ~:~ (A\c A) (\a-\beta) \ge d}
                    (A\c A)^{2s-2} (\a) (A\c A)^{2s-2} (\beta) (A\c A)^2 (\a-\beta)
$$
\begin{equation}\label{tmp:20.11.2012_ev}
    \ll
            \E_3 \D^{4s-4} \sum_{x ~:~ (A\c A) (x) \ge d} (D\c D) (x) (A\c A)^2 (x) \,,
\end{equation}
where $d$ can be taken as
$d=\frac{(s-1)^3 \sigma \mu_0 (\oT_1)}{2^{13} L\D^{s-2} \E^{1/2}_3}$
(and $d=\frac{\mu^2_0 (\oT_1)}{32|A|\E^{1/2}_3}$ in the case $s=2$).
Applying Cauchy--Schwartz inequality, we have
$$
    \sum_x (A\c A)^2 (x) (D \c D) (x) \le \E^{2/3}_3 \left( \sum_{x} (D \c D)^3 (x) \right)^{1/3}
        \le
            \E^{2/3}_3 |D|^{1/3} \E^{1/3} (D) \,.
$$
Put $\E (D) = \mu |D|^3$.
Recalling (\ref{tmp:20.11.2012_ev}), we get
\begin{equation}\label{f:mu_sigma}
    (s-1)^6 \mu^4_0 (\oT_1) \sigma^2
        \ll
            \left( \frac{M |A|^4}{K^2} \right)^{5/3} \D^{4s-4} |D|^{4/3} \mu^{1/3} \,.
\end{equation}
We have $\D \ll M^{1/(3-s)} |A| / K$.
First of all consider the case $s=2$.
In the situation the following holds $\sigma \ge \mu_0 (\oT_1)$.
Thus,
an
accurate calculations give
$$
    \E (D) = \mu |D|^3 \gg \frac{|D|^3}{M^9 L^{14}} \,.
$$
By Balog--Szemer\'{e}di--Gowers Theorem \ref{t:BSzG}
there is
$D' \subseteq D$ such that
$|D'| \gg \mu |D|$
and
$
    |D'+D'| \ll \mu^{-6} |D'|
$.
Pl\"{u}nnecke--Ruzsa inequality (see e.g. \cite{tv}) yields
\begin{equation}\label{tmp:31.07.2012_1}
    |nD'-mD'| \ll \mu^{-6(n+m)} |D'| \,,
\end{equation}
for every $n,m \in \N$.
Using the definition of the set $D=D_j$ and inequality
(\ref{tmp:17.11.2012_D_and_tilde'}) (recall that we are considering the case $s=2$),
we find $x\in \Gr$ such that
\begin{equation}\label{tmp:31.07.2012_2}
    |(A-x) \cap D'| \gg \mu |A| L^{-1} M^{-1}
        \gg
            M^{-10} L^{-15} \cdot |A| \,.
\end{equation}
Put $A' = A\cap (D'+x)$.
Using (\ref{tmp:31.07.2012_1}), (\ref{tmp:31.07.2012_2}) and the definition of $\D$,
we obtain for all $n,m \in \N$
\begin{equation}\label{tmp:31.07.2012_2'''}
    |nA'-mA'| \le |nD'-mD'| \ll \mu^{-7(n+m)} |A| |A'| \D^{-1}
        \ll
            \mu^{-6(n+m)} K |A'|
\end{equation}
and the theorem is proved in the case $s=2$.

Now take any $s\in (1,3)$.
Returning to (\ref{f:mu_sigma}), using (\ref{tmp:TILDE}) and making similar computations,
we obtain
\begin{equation}\label{tmp:29.11.2012_1}
    \left( \frac{(s-1)\E_s}{L|A|} \right)^{20/3} \D^{10-20s/3} \ll |A|^{10/3} \mu^{1/3} \left( \frac{M |A|^4}{K^2} \right)^{5/3} \,.
\end{equation}
Suppose that $s\in (1,3/2]$.
In the case
$$
    \mu \gg \frac{(s-1)^{20}}{M^5 L^{20}}
$$
because of $\D \gg |A|/K$.
After that repeat the arguments above.
If $s\in [3/2,3)$ then (\ref{tmp:29.11.2012_1}) gives us
$$
      \left( \frac{(s-1) \E_s}{L|A|} \right)^{20/3} (M^{1/(3-s)} |A| K^{-1})^{10-20s/3} \ll |A|^{10/3} \mu^{1/3} \left( \frac{M |A|^4}{K^2} \right)^{5/3}
$$
because of $\D \ll M^{1/(3-s)} |A| / K$.
Computations show
$$
    \mu \gg \frac{1}{M^{(45-25s)/(3-s)} L^{20}} \,.
$$
After that repeat the arguments above once more.
This completes the proof.
$\hfill\Box$
\end{proof}

\bigskip

Certainly, inequality (\ref{f:E_3_size}) and the assumption $\E(A) = |A|^3/K$
of the Theorem \ref{t:E_3_M} imply that $|A'-A'| \gg_M K|A'|$.
Thus, we need in the multiple $K$ in (\ref{f:E_3_doubling_ss}).

Of course, using the definition of the number $\D$ more accurate one can improve estimates
(\ref{f:E_3_size}), (\ref{f:E_3_doubling}) a little bit.

\bigskip

\begin{remark}
For every convex set Theorem \ref{t:E_3_M} above easily gives a "nontrivial"\, estimate $\E(A) \ll |A|^{5/2-\eps_0}$,
where $\eps_0>0$ is an absolute constant.
Indeed, putting $M = \log |A|$,
using formula (\ref{f:convex_A'}) of Lemma \ref{l:E_3_convex} and the upper bound for the energy $\E_3 (A)$
follows from the lemma, we have for the set $A'$ from Theorem \ref{t:E_3_M} that
$$
    |A|^{7/4} \ll_M |A'+A'-A'| \ll_M |A|^4 \E^{-1} (A)
$$
and the result follows.
Applying more refine method from \cite{ss2} one can get even simpler proof.
Indeed, for so large $A'\subseteq A$ we have
$|A|^{3/2+\eps_1} \ll_M |A'-A'| \ll_M |A|^4 \E^{-1} (A)$,
$\eps_1>0$ is an absolute constant
and again we obtain a lower bound for $\eps_0$.
Interestingly, that lower bounds for doubling constants
give us upper bounds for the additive energy in the case.

The same proof takes place for multiplicative subgroups
$\G \subseteq \Z/p\Z$, where $p$ is a prime number
if one use Stepanov's method (see e.g. \cite{K_Tula} or \cite{sv})
or combine Stepanov's method with
recent
lower bounds for the doubling constant from \cite{ss,sv,s_ineq}.
Note that an estimate of the sort $\E(\G) \ll |\G|^{5/2-\eps}$ was known before
and was obtained by another variant of the eigenvalues method, see \cite{s_ineq}.
On the other hand
any multiplicative subgroup $\G$ of size $|\G| > p^\eps$
is an additive basis of $\Z/p\Z$ of order $C(\eps)$
(see e.g. \cite{Glibichuk_Konyagin}, \cite{Bourgain_more} and general sum--product inequalities in \cite{Rudnev_11}).
The fact that the same is true for sets with small multiplicative doubling
was obtained in \cite{Bourgain_DH}
(more precisely, Bourgain proved that Fourier coefficients of such sets
are small in average)
and this also implies that a "non--trivial"\, upper bound for $\E(\G)$ holds.
\label{r:worker-peasant}
\end{remark}

\bigskip

The arguments above allow replace the condition on $\E_3$
in Theorem \ref{t:E_3_M} onto the same condition on $\E_4$ easily.
By evenness the proof is simpler in the situation.
General result of the same type with another constants was obtained in \cite{ss_E_k}, see Theorem 54.
We include the proof here because of the next important Theorem \ref{t:E_4_T_4},
which can be obtained by almost the same arguments.

\begin{theorem}
    Let $s\in [8/5,4)$ be a real number,
    $A\subseteq \Gr$ be a set, $\E_s (A) = |A|^{s+1}/K^{s-1}$, and $\E_4 (A) = M |A|^5 / K^3$.
    Then there is a set $A' \subseteq A$
    such that
    \begin{equation}\label{f:E_4_size}
        |A'| \gg  M^{-(5s-5)/(4-s)} (4-s)^{6} \log^{-6} M \cdot |A|  \,,
    \end{equation}
    and
    \begin{equation}\label{f:E_4_doubling}
        |nA'-mA'| \ll (M^{(4s-4)/(4-s)} (4-s)^{-5} \log^{5} M)^{6(n+m)} K |A'|
    \end{equation}
    for every $n,m\in \N$.
    If $s\in (1,8/5]$ then
    there is a set $A' \subseteq A$
    such that
    \begin{equation}\label{f:E_4_size_small}
        |A'| \gg  M^{-3/(4-s)} (s-1)^6 \log^{-6} (M(s-1)^{-1}) \cdot |A|  \,,
    \end{equation}
    and
    \begin{equation}\label{f:E_4_doubling_small}
        |nA'-mA'| \ll (M (s-1)^{-5} \log^{5} (M(s-1)^{-1}) )^{6(n+m)} K |A'|
    \end{equation}
    for every $n,m\in \N$.
\label{t:E_4_M}
\end{theorem}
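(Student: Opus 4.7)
The plan is to parallel the proof of Theorem \ref{t:E_3_M}, with one key simplification: wherever that proof uses a Hölder estimate in terms of $\E_3(D)$, we will apply Cauchy--Schwarz directly in terms of $\E_4(A)$ and $\E(D)$. This is exactly the "evenness"\, alluded to in the discussion: because $\E_4 = \sum_x (A\c A)^4(x)$ is a second moment of $(A\c A)^2$, the natural way to pass from $\sum_x (A\c A)^2(x) (D\c D)(x)$ to $\E(D)$ uses a single Cauchy--Schwarz rather than Hölder at exponent~$3$.

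First I would perform the dyadic decomposition of $A-A$ exactly as in Theorem \ref{t:E_3_M}, setting $D_j = \{x\in A-A :\, 2^{j-2}|A|K^{-1} < |A_x| \le 2^{j-1}|A|K^{-1}\}$. The trivial bound $|D_j|(2^{j-2}|A|K^{-1})^4 \le \E_4(A)$ now replaces its $\E_3$ analogue, giving $|D_j| \ll M|A|K\cdot 2^{-4j}$ and restricting the dyadic range to $l \ll L := (4-s)^{-1}\log(4M(s-1)^{-1})$. Pigeonholing on the identity $(s-1)\E_s(A) \ll \sum_j \sum_{x\in D_j}|A_x|^s$, I would select a single level $D = D_j$ carrying $(s-1)L^{-1}\E_s \ll \sum_{x\in D}|A_x|^s$, and set $\D = 2^{j-1}|A|/K$ and $g(x) = (A\c A)^{s-1}(x) D(x)$.

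Next I would introduce the operators $\oT_1 = \oT^g_A$, $\oT_2 = \oT^A_{A,D}$, $\oT_3 = \oT^{A\c A}_A$ as before. Corollary \ref{cor:mu_energy_mu_g} yields the lower bound $\sigma := \langle \oT_3 f_0, f_0\rangle \gg |D|^2 \D^3/|A|^3$, and Proposition \ref{p:triangles_g} applied with $g_1 = g$, $g_2 = A\c A$ followed by a Cauchy--Schwarz against $\Cf_3(A)$ gives
$$
    (s-1)^6 \mu_0^4(\oT_1)\, \sigma^2
        \ll
            \E_3(A)\, \D^{4s-4}\!\!\sum_{x:\, (A\c A)(x)\ge d} (D\c D)(x) (A\c A)^2(x),
$$
where the threshold $d$ is chosen as in the proof of Theorem \ref{t:E_3_M} so that the restriction above is legitimate (using (\ref{f:TT*}), (\ref{f:sum_squares_eigenvalues'_1}) via $\oT_2$). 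At this point I apply Cauchy--Schwarz in the form
$$
    \sum_x (A\c A)^2(x) (D\c D)(x) \le \E_4^{1/2}\, \E(D)^{1/2},
$$
which is the simplification from "evenness"\,, and bound $\E_3 \le \E(A)^{1/2}\E_4^{1/2} = M^{1/2}|A|^4 K^{-2}$ by Cauchy--Schwarz.

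Finally, I would combine these estimates with the lower bound on $\sigma$ and $\mu_0(\oT_1) \gg (s-1)\E_s/(L|A|)$, then solve for $\mu := \E(D)/|D|^3$. The case split at $s = 8/5$ is forced by whether the trivial bound $\D \gg |A|/K$ or the extremal bound $\D \ll M^{1/(4-s)}|A|/K$ dominates after rearrangement; this is exactly the analogue of the split at $s = 3/2$ in Theorem \ref{t:E_3_M}, but with the exponent shifted because the integrand carries an extra power of $(A\c A)$. Once $\mu$ is bounded from below, Theorem \ref{t:BSzG} produces $D' \subseteq D$ with $|D'| \gg \mu|D|$ and $|D'+D'| \ll \mu^{-5}|D'|$, Plünnecke--Ruzsa gives $|nD'-mD'| \ll \mu^{-6(n+m)}|D'|$, and the definition of the popular level set $D$ yields a translate $A' = A \cap (D' + x)$ with the claimed size and, using $\D \gg |A|/K$ at the end, the claimed doubling bound $|nA'-mA'| \ll \mu^{-6(n+m)} K|A'|$.

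The routine manipulations are the two Cauchy--Schwarz applications and the Plünnecke--Ruzsa step. The main obstacle is purely bookkeeping: carefully tracking the powers of $M$, $s-1$, $4-s$, $K$, $|A|$, and $L$ through the final inequality, and verifying that the split at $s = 8/5$ really produces the two regimes stated in (\ref{f:E_4_size})--(\ref{f:E_4_doubling_small}). The fact that $\E_4$ is a second moment should be what allows us to recover logs $\log^{5}M$ and $\log^{6}M$ in place of the $\log^{20}M$, $\log^{21}M$ of the $\E_3$-version.
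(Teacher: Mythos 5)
The paper does \emph{not} prove Theorem \ref{t:E_4_M} by running the Theorem \ref{t:E_3_M} machinery with a modified endgame, and your proposal, which keeps that machinery, would give genuinely weaker exponents than the theorem claims. After the same dyadic decomposition and the choice $g = D\cdot(A\c A)^{s-1}$, $\oT_1 = \oT^g_A$, the paper simply bounds $\mu_0^4(\oT_1) \le \tr (\oT_1^4) = \sum_{x,y,z,w\in A} g(x-y)g(y-z)g(z-w)g(w-x)$, changes variables $\a=x-y$, $\beta=x-z$, $\gamma=x-w$ to rewrite the trace as $\sum_{\a,\beta,\gamma}\Cf_4(A)(\a,\beta,\gamma)\,g(\a)g(\beta-\a)g(\gamma-\beta)g(\gamma)$, and applies one Cauchy--Schwarz against the $\Cf_4$ factor (using $\sum\Cf_4^2=\E_4$ and $\sum_{\a,\beta,\gamma}D(\a)D(\beta-\a)D(\gamma-\beta)D(\gamma)=\E(D)$) to obtain
\begin{equation*}
\mu_0^8(\oT_1) \ll \E_4(A)\,\D^{8s-8}\,\E(D)\,.
\end{equation*}
There is no $\oT_2$, no $\oT_3$, no Corollary \ref{cor:mu_energy_mu_g}, no Proposition \ref{p:triangles_g} (hence no Carbery inequality), no threshold $d$, and no quantity $\sigma$. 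That is the content of the remark that ``by evenness the proof is simpler'': a closed four-cycle in $\oT_1$ pairs with the fourth moment $\E_4$ through a single Cauchy--Schwarz, whereas the odd moment $\E_3$ forces the trilinear detour.

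Your chain $(s-1)^6\mu_0^4\sigma^2 \ll \E_3\D^{4s-4}\sum_x(D\c D)(x)(A\c A)^2(x) \le \E_3\D^{4s-4}\E_4^{1/2}\E(D)^{1/2}$ is a correct inequality, but it is not tight. The first Cauchy--Schwarz against $\Cf_3$ produces $\E_3(A)$, which is no longer the controlled quantity; bounding it through $\E_4$ (whether by $\E_3\le\E^{1/2}\E_4^{1/2}$, which incidentally needs $\E\le|A|^3/K$ and hence $s\ge2$, or by $\E_3\le\E_4^{2/3}|A|^{2/3}$, which holds for all $s$) costs an extra power of $M$, and squaring the Carbery step $\mu_0^2\sigma$ is lossier than using $\mu_0^8\le(\tr(\oT_1^4))^2$ directly. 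Tracking your chain through at $s=2$ gives roughly $\mu:=\E(D)/|D|^3\gg M^{-10/3}\log^{-9}M$ at best, short of the $\mu\gg M^{-2}\log^{-5}M$ that (\ref{f:E_4_doubling}) demands at $s=2$. Furthermore, the $\D$-exponent your route produces is $(20-13s)/2$, so the natural case split would sit at $s=20/13$, not at $s=8/5$; the paper's split at $8/5$ comes precisely from the exponent $5s-8$ in $\D^{8s-8}\E(D)=\D^{8s-8}\mu|D|^3$ after substituting $|D|\approx\E_s/(L\D^s)$. The dyadic decomposition, the operator $\oT_1$, the $\mu_0$ and $|D|$ estimates, and the Balog--Szemer\'{e}di--Gowers plus Pl\"{u}nnecke--Ruzsa endgame are all exactly as in the paper; the missing idea is to replace the entire middle of the argument by the trace of $\oT_1^4$.
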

\begin{proof}
Let
$\E_4 = \E_4 (A)$, $L=2(4-s)^{-1} \log (4M(s-1)^{-1})$.
In terms of Theorem \ref{t:E_3_M}, we have
$$
    (s-1)^8 \mu^8_0 (\oT_1)
        \ll
            \left( \sum_{x,y,z,w \in A} g(x-y) g(y-z) g(z-w) g(w-x) \right)^2
                =
$$
$$
    =
        \left(  \sum_{\a,\beta,\gamma} \Cf_4 (A) (\a,\beta,\gamma)
                        g^2 (\a) g^2 (\beta-\a) g^2 (\gamma-\beta) g^2 (\gamma) \right)^2
            \ll
$$
$$
                \ll
                    \E_4 \cdot \sum_{\a,\beta,\gamma} g^2 (\a) g^2 (\beta-\a) g^2 (\gamma-\beta) g^2 (\gamma)
    \ll
        \E_4 \cdot \D^{8s-8} \E(D)
            \ll
                \frac{M|A|^5}{K^3} \cdot \D^{8s-8} \E(D) \,.
$$
Here $g(x) = D(x) (A\c A)^{s-1} (x)$, $\oT_1 = \oT^g_A$,
$D=D_j$, $\Delta = 2^{j-1} |A| K^{-1}$.
We have $2^j \ll M^{1/(4-s)}$ and hence $\D \ll M^{1/(4-s)} |A| K^{-1}$.
Note that the number $j$ can be estimated by $\log M^{1/(4-s)} = L$.
Put $\E(D) = \mu |D|^3$.
After some accurate  calculations, we get for $s\in [8/5, 4)$ that
$$
    \left( \frac{|A|^s}{K^{s-1} L} \right)^5
        \ll
    \left( (s-1) \frac{|A|^s}{K^{s-1} L} \right)^5 \ll \frac{M|A|^5}{K^3} \D^{5s-8} |A|^3 \mu
        \ll
            \frac{M|A|^5}{K^3} (M^{1/(4-s)} |A| K^{-1})^{5s-8} |A|^3 \mu \,.
$$
Whence
$$
    \mu \gg \frac{1}{M^{(4s-4)/(4-s)} L^5} \,.
$$
After that repeat the arguments from lines (\ref{tmp:31.07.2012_1})---(\ref{tmp:31.07.2012_2'''})
of the proof of Theorem \ref{t:E_3_M}.

If $s\in (1,8/5)$ then it is easy to see that
$$
    \mu \gg \frac{(s-1)^5}{ML^5} \,.
$$
Repeating the arguments above gives the result.
This concludes the proof.
$\hfill\Box$
\end{proof}

\bigskip

Of course Theorems \ref{t:E_3_M},  \ref{t:E_4_M} can be generalized onto higher moments
but such generalizations became weaker if one consider higher $\E_k$
because we should have deal with $\T_k (D)$
not
$\E(D)$.

Instead of this we take another characteristic of a set $A$, namely, its energy $\T_4 (A)$
and obtain structural theorem in the situation.
Similar
result
was obtained in \cite{ss_E_k}, see Theorem 60.
Theorem \ref{t:BK_structural} from the introduction
has the same form but weaker assumption.

\bigskip

\begin{theorem}
    Let $A\subseteq \Gr$ be a set, $\E_{3/2} (A) = |A|^{5/2}/K^{1/2}$, and $\T_4 (A) = M|A|^7 / K^3$.
    Then there is a set $A' \subseteq A$
    such that
    \begin{equation}\label{f:E_4_size}
        |A'| \gg  \frac{|A|}{M K}  \,,
    \end{equation}
    and
    \begin{equation}\label{f:E_4_energy}
        \E(A') \gg \frac{|A'|^3}{M} \,.
    \end{equation}
    If $s$ is a real number, $s\in (1,3/2]$ and
    we have the following condition
    $\E_{s} (A) = |A|^{s+1}/K^{s-1}$ then
    there is a set $A' \subseteq A$
    such that
    \begin{equation}\label{f:E_4_size_small}
        |A'| \gg  \frac{(s-1)^8 |A|}{M K \log^8 K}  \,,
    \end{equation}
    and
    \begin{equation}\label{f:E_4_energy_small}
        \E(A') \gg \frac{(s-1)^8 |A'|^3}{M \log^8 K} \,.
    \end{equation}
\label{t:E_4_T_4}
\end{theorem}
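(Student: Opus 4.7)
The plan is to follow the dyadic-decomposition-plus-operator template of Theorems \ref{t:E_3_M} and \ref{t:E_4_M}, with $\T_4(A)$ replacing $\E_4(A)$ in the trace step. First I would dyadically partition the popular differences of $A$: set $D_j = \{x \in A-A : 2^{j-2}|A|/K < (A\c A)(x) \le 2^{j-1}|A|/K\}$ for $j \in [L]$, $L\ll \log K$, and use $\E_{3/2}(A) = |A|^{5/2}/K^{1/2}$ with pigeonhole to pick a level $D = D_j$ with $\sum_{x \in D}(A\c A)^{3/2}(x) \gg \E_{3/2}(A)/L$; writing $\D = 2^{j-1}|A|/K$, this gives $|D|\D^{3/2} \gg |A|^{5/2}/(K^{1/2}L)$. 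The general case $s\in(1,3/2]$ is handled identically with $s$ in place of $3/2$, the $(s-1)^{O(1)}\log^{-O(1)}K$ factors of (\ref{f:E_4_size_small})--(\ref{f:E_4_energy_small}) arising from this pigeonhole.

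Second I would introduce the operator $\oT_1 = \oT^g_A$ with weight $g(x) = D(x)(A\c A)^{1/2}(x)$, for which $\mu_0(\oT_1) \gg \E_{3/2}(A)/(L|A|)$ by testing against $A$. For an upper bound I compute $\mu^4_0(\oT_1) \le \mathrm{tr}(\oT_1^4)$; after the change of variables $d_i = x_i - x_{i+1}$ (with $x_5 = x_1$) this trace equals
\begin{equation*}
\sum_{\substack{d_1,d_2,d_3,d_4\in D \\ d_1+d_2+d_3+d_4=0}} \prod_{i=1}^{4}(A\c A)^{1/2}(d_i)\cdot N(d_1,d_2,d_3),
\end{equation*}
where $N(d_1,d_2,d_3)=|A_{-d_1,-d_1-d_2,-d_1-d_2-d_3}|$. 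Applying Cauchy--Schwarz to the pair $(\prod_i(A\c A)^{1/2}(d_i),\, N(d_1,d_2,d_3))$ bounds the resulting first factor $\sum_{\sum d_i=0,\, d_i\in D}\prod_i(A\c A)(d_i)$ by $\sigma_4(A\c A) = \T_4(A)$, while the second factor $\sum N(d_1,d_2,d_3)^2$, after the reparametrization $s_i = -d_1-\cdots-d_i$ and the crude bound $|A_{s_1,s_2,s_3}| \ll \D$, reduces to a count of paths of length three in $D$ bounded by $\D^{O(1)}\E(D)$. Rearranging against the lower bound on $\mu_0$ yields $\E(D) \gg |D|^3/M$ up to logarithmic losses. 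Then Balog--Szemer\'{e}di--Gowers (Theorem \ref{t:BSzG}) extracts $D' \subseteq D$ with $|D'| \gg |D|/M$ and $|D'+D'| \ll M|D'|$, and since $(A\c A)(x) \gg \D$ on $D'$, pigeonhole on $\sum_{a \in A}|A \cap (a-D')| = \sum_{x \in D'}(A\c A)(x) \gg |D'|\D$ furnishes $a \in \Gr$ with $|A \cap (a-D')| \gg |D'|\D/|A|$. Setting $A' = A \cap (a-D')$, the inclusion $A' \subseteq a-D'$ gives $|A'+A'| \le |D'+D'| \ll M|D'|$, and after parameter tracking both $|A'| \gg |A|/(MK)$ and $\E(A') \ge |A'|^4/|A'+A'| \gg |A'|^3/M$ follow.

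The main obstacle is the Cauchy--Schwarz arrangement in paragraph two: the analogous step in Theorem \ref{t:E_4_M} naturally produces $\E_4(A)$ (via a $\Cf_4(A)$ expansion of the trace), and we must instead route the split so that $\sigma_4(A\c A) = \T_4(A)$ emerges as the dominant factor, with the off-diagonal combinatorics cleanly absorbed into $\D^{O(1)}\E(D)$. A secondary difficulty is the transfer: obtaining the sharp bound $|A'+A'| \ll M|A'|$ (not $M^{O(1)}|A'|$), which is essential for the sharp energy conclusion $\E(A') \gg |A'|^3/M$; this may call for Schoen's version of Balog--Szemer\'{e}di--Gowers, or a direct construction of $A'$ from the main eigenfunction of $\oT_1$ in the spirit of Example \ref{exm:H_cup_L}, possibly via the tensor power of Lemma \ref{l:tensor_operator}.
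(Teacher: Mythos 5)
Your proposal follows the operator/trace framework of Theorems~\ref{t:E_3_M} and~\ref{t:E_4_M}, but it diverges from the paper's proof at the decisive extraction step, and that divergence is a genuine gap that cannot be repaired along the route you propose. First, for $s=3/2$ the paper takes $g=(A\c A)^{1/2}$ with no dyadic restriction to $D$ at all (this is why there is no $\log$ loss in~(\ref{f:E_4_size})--(\ref{f:E_4_energy})), and the trace step is
$$
\mu_0^8(\oT_1)\ \ll\ \Big(\,\sum_{x,y,z,w\in A} g(x-y)g(y-z)g(z-w)g(w-x)\Big)^2\ \ll\ \E_4(A)\cdot\T_4(A),
$$
where the Cauchy--Schwarz is applied against $\Cf_4(A)$, so $\E_4(A)$ (not $\E(D)$) appears. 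Your claimed bound $\sum N^2\ll\D^{O(1)}\E(D)$ for the second Cauchy--Schwarz factor is not justified; without the artificial $D$-restriction that sum is simply $\E_4(A)$, which is what the paper uses. Combined with $\mu_0\ge\E_{3/2}(A)/|A|$, this yields $\E_4(A)\gg |A|^5/(MK)$; set $\mu:=\E_4(A)/|A|^5\gg 1/(MK)$.

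The part you are missing is how $A'$ is produced. The paper does \emph{not} use Balog--Szemer\'edi--Gowers here. It invokes Lemma~\ref{l:E_k-identity}, $\E_4(A)=\sum_{s,t}\E(A_s,A_t)$, discards the pairs with $|A_s|$ or $|A_t|$ below $2^{-2}\mu|A|$ (this costs at most $\tfrac12\E_4$), and then maximizes the ratio $\nu=\max\E(A_s,A_t)/(|A_s|^{3/2}|A_t|^{3/2})$ over the remaining pairs. Since $\sum_{s,t}|A_s|^{3/2}|A_t|^{3/2}=\E_{3/2}(A)^2=|A|^5/K$, one gets $\nu\gg\mu K\gg 1/M$; the extremal pair satisfies $\E(A_s,A_t)\gg|A_s|^{3/2}|A_t|^{3/2}/M$, and one application of Cauchy--Schwarz ($\E(A_s,A_t)^2\le\E(A_s)\E(A_t)$) forces one of $A_s,A_t$ to be the desired $A'$ with $|A'|\gg\mu|A|\gg|A|/(MK)$ and $\E(A')\gg|A'|^3/M$. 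This extraction loses nothing in $M$. Your BSzG route structurally cannot achieve that: any form of Balog--Szemer\'edi--Gowers (Theorem~\ref{t:BSzG} or Schoen's refinement) degrades the energy/doubling parameter polynomially, so from $\E(D)\gg|D|^3/M$ you get at best $|D'+D'|\ll M^{O(1)}|D'|$ and hence $\E(A')\gg|A'|^3/M^{O(1)}$. You flag this as a ``secondary difficulty,'' but it is the main one, and the only fix the paper offers is the $\E_4=\sum_{s,t}\E(A_s,A_t)$ identity plus the weighted maximization, which sidesteps BSzG entirely. Finally, for $s\in(1,3/2]$ the paper does reintroduce the dyadic set $D$ and $g=D\cdot(A\c A)^{s-1}$ (hence the $\log K$ and $(s-1)$ factors), but the $\E_4$-lower-bound-plus-extraction structure is the same.
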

\begin{proof}
Let
$\E_4 = \E_4 (A)$, $\T_4 = \T_4 (A)$.
In terms of Theorems \ref{t:E_3_M}, \ref{t:E_4_M}, we have
$$
    \mu^8_0 (\oT_1)
        \ll
            \left( \sum_{x,y,z,w \in A} g(x-y) g(y-z) g(z-w) g(w-x) \right)^2
                \ll
$$
$$
                \ll
                    \E_4 \cdot \sum_{\a,\beta,\gamma} g^2 (\a) g^2 (\beta-\a) g^2 (\gamma-\beta) g^2 (\gamma)
    \ll
        \E_4 \cdot \T_4
            \ll
                \E_4 \cdot \frac{M|A|^7}{K^3} \,.
$$
Here
$g(x) = (A\c A)^{s-1} (x)$, $\oT_1 = \oT^g_A$.
Put $\E_4 = \mu |A|^5$.
First of all consider the case $s=3/2$.
After some calculations, we get
$$
    \mu \gg \frac{1}{M K} \,.
$$
Clearly,
$$
    \sum_{s~:~ |A_s| < 2^{-2}\mu |A|}\, \sum_t \E(A_s,A_t)
        \le \sum_{s~:~ |A_s| < 2^{-2}\mu |A|}\, \sum_t |A_s|^2 |A_t|
            < 2^{-2} \E_4 \,.
$$
Thus by Lemma \ref{l:E_k-identity} one has
\begin{equation}\label{tmp:26.11.2012_1}
    \sum_{s,t ~:~ |A_s|, |A_t| \ge 2^{-2}\mu |A|} \E(A_s,A_t) \ge 2^{-1} \E_4 \,.
\end{equation}
Put
$$
    \nu := \max_{|A_s|,\, |A_t| \ge 2^{-2}\mu |A|}\, \frac{\E(A_s,A_t)}{|A_s|^{3/2} |A_t|^{3/2}} \,.
$$
By (\ref{tmp:26.11.2012_1}), we have
$$
    2^{-1} \E_4 \le \nu \sum_{s,t} |A_s|^{3/2} |A_t|^{3/2} = \nu \E^2_{3/2} (A)
$$
and hence $\nu \ge 2^{-1} \mu K$.
It follows that there are $s,t$ such that $|A_s|, |A_t| \ge 2^{-2}\mu |A|$ and
$$
    \E(A_s,A_t) \ge \nu |A_s|^{3/2} |A_t|^{3/2} \gg  \frac{|A_s|^{3/2} |A_t|^{3/2}}{M}\,.
$$
Applying Cauchy--Schwartz inequality, we obtain the result.

Now let $s\in (1,3/2]$.
In the case we put $g(x) = D(x) (A\c A)^{s-1} (x)$, where the set $D$ is defined
as in Theorems \ref{t:E_3_M}, \ref{t:E_4_M}.
Then the following holds
$$
    (s-1)^8 \left( \frac{\E_s (A)}{|A| L} \right)^8
        \ll
            \E_4 \T_4 \Delta^{8s-12} \,,
$$
where $\D \gg |A|/K$ and $L \ll \log K$.
Hence
$$
    \mu \gg \frac{(s-1)^8}{M L^8 K} \,.
$$
After that repeat the arguments above.
This completes the proof.
$\hfill\Box$
\end{proof}

\bigskip

\begin{remark}
    All bounds of Theorem \ref{t:E_4_T_4} are tight as an example $A=H\dotplus \L \subseteq \F_2^n$ shows.
    Here $H \le \F_2^n$ as a subspace and $\L$ is a dissociated set (basis)
    (see also Example \ref{r:L+H_E_s} from section \ref{sec:applications3}).
    This set $A$ corresponds to the case $\a=0$ in Theorem \ref{t:BK_structural} from introduction.
    There are another more difficult examples which demonstrate  the same.
\label{r:L+H}
\end{remark}

\begin{remark}
    The proof of Theorem \ref{t:E_4_T_4} gives, in particular, that
$$
    \left( \frac{\E_{3/2} (A)}{|A|} \right)^{2k}
        \le
            \E_k (A) \T_k (A)
$$
and
\begin{equation}\label{f:E_k,T_k,sigma}
    \left( \frac{\sum_{x\in D} (A\c A) (x)}{|A|} \right)^{2k}
        \le
            \E_k (A) \T_{k/2} (D)
\end{equation}
for any sets $A,D\subseteq \Gr$ and even positive $k$.
Some particular case of the last formula appeared in \cite{ss_E_k}, see Lemma 3
and also Remark 61.
\end{remark}

\begin{remark}
    In Theorem \ref{t:E_4_T_4} we have found a set $A'$ with huge additive energy.
    Thus, by Balog--Szemer\'{e}di--Gowers Theorem there is a huge subset of $A'$ with small doubling,
    similarly to
    Theorem \ref{t:BK_structural} from introduction.
    Remark \ref{r:L+H} shows that there is an example demonstarating sharpness of our theorem
    and having parameter $\a=0$.
    It is easy to construct similar counterexample
    corresponding to the opposite case $\a=(1-\tau_0)/2$ in Theorem \ref{t:BK_structural}.
    Indeed,
    let $H_1,\dots,H_k$, where $k=[K^{1/2}]$ be some totally  disjoint subspaces of $\F_2^n$
    in the sense that
    $|H_1+\dots+H_k| = |H_1|\dots |H_k|$ (see e.g. \cite{Sanders_approximate_groups}).
    Put $A=\bigsqcup_{j=1}^k H_j$.
    Then $\T_t (A) \sim |A|^{2t-1}/K^{t-1}$, $\E_s (A) \sim |A|^{s+1}/K^{s/2}$ but there is no any nontrivial $X_j$ here.
    So, once more,
    in terms of $\E_4 (A)$ and $\T_4 (A)$
    our Theorem \ref{t:E_4_T_4} is the best possible (even for smaller $\E_{3/2} (A)$)
    as the
    example
    above
    shows.
    Of course if we know something on "height"\, (see \cite{BK_AP3,BK_struct} or section \ref{sec:applications3})
    of the set $A$ then such $X_j$ can appear
    (inequality (\ref{f:E_k,T_k,sigma}) cast light to this slightly).
    We discuss the example with subspaces $H_1,\dots,H_k$ in section \ref{sec:applications3}.
\label{r:self-dual}
\end{remark}

     The particular case when the parameter $s$ equals $1$ in the theorems of the section was considered in \cite{s_ineq},
     see also \cite{ss_E_k}.

\section{Dual popular sets}
\label{sec:applications3}


Let $k\ge 2$ be an integer and $c\in (0,1]$ be a real number.
Given a set $A\subseteq \Gr$,
we call a set $\mathcal{P} \subseteq \Gr^{k-1}$ a {\it $(k,c)$--dual} (or just dual) to a set $P \subseteq \Gr$
if
$$
    c\E^P_k (A)
        \le
    \sum_{x,y} P(x-y) A(x) A(y)
\m
$$
\begin{equation}\label{def:dual_sets}
\m
    \sum_{z_1,\dots,z_{k-1}} \mathcal{P} (z_1,\dots,z_{k-1})
        A(x+z_1) \dots A(x+z_{k-1}) A(y+z_1) \dots A(y+z_{k-1}) \,.
\end{equation}
In the same way we can define a {\it $(k,c)$--dual}  set $P \subseteq \Gr$ to a set $\mathcal{P} \subseteq \Gr^{k-1}$
if
$$
    c \E^{\mathcal{P}}_k (A)
        \le
    \sum_{x,y} P(x-y) A(x) A(y)
\m
$$
\begin{equation}\label{def:dual_sets'}
\m
    \sum_{z_1,\dots,z_{k-1}} \mathcal{P} (z_1,\dots,z_{k-1})
        A(x+z_1) \dots A(x+z_{k-1}) A(y+z_1) \dots A(y+z_{k-1}) \,.
\end{equation}
Of course a dual set is not unique
and we write the fact that $\mathcal{P}$ belongs to the family of dual sets of $P$ as $\mathcal{P} = P^*$
and vice versa.

It is easy to find a pair of dual sets, e.g. take for any $P$
the set $\mathcal{P} = A^{k-1} - \D_{k-1} (A)$
and for any $\mathcal{P}$ the set $P=A-A$.
Let us consider another examples.
Let $P \subseteq \Gr$ be a popular difference set in the sense that
$$
    P=\{ z ~:~ |A_z|^{k-1} \ge \E_k(A) (2|A|^2)^{-1} \} \,.
$$
Then
\begin{equation}\label{tmp:24.12.2012_2}
    2^{-1} \E_k (A) \le \E^P_k (A) = \sum_{z\in P} |A_z|^k
        = \sum_{z_1,\dots,z_{k}} A(z_1) \dots A(z_k) \Cf_{k+1} (P,A) (z_1,\dots,z_k) \,.
\end{equation}
If we put
$$
    \mathcal{P} = \{ (z_1,\dots,z_{k-1}) ~:~ \Cf_{k} (A) (z_1,\dots,z_{k-1}) \ge \E_k (A) (4|A|^k)^{-1} \}
$$
then
$$
    2^{-2} \E_k (A) \le 2^{-1} \E^P_k (A) \le  \sum_{z_1,,\dots,z_{k}} A(z_1) \dots A(z_k) \mathcal{P} (z_1-z_k,\dots, z_{k-1}-z_k)
        \Cf_{k+1} (P,A) (z_1,\dots,z_k)
            =
$$
$$
            =
                \sum_{x,y} P(x-y) A(x) A(y) \sum_{z_1,\dots,z_{k-1}} \mathcal{P} (z_1,\dots,z_{k-1})
        A(x+z_1) \dots A(x+z_{k-1}) A(y+z_1) \dots A(y+z_{k-1}) \,.
$$
Thus, we have constructed $(k,1/2)$--dual sets.
In the same way we can start from the inequality
$$
    2^{-1} \E_k (A) \le \E^{\mathcal{P}}_k = \sum_{(z_1,\dots,z_{k-1}) \in \mathcal{P}} \Cf_k^2 (A) (z_1,\dots,z_{k-1})
$$
and after that define $P$, showing that $P = \mathcal{P}^*$.
In these two examples $P$ and $\mathcal{P}$ are popular difference sets.
Thus, we call the pair $P,\mathcal{P}$ of the form as $(k,1/4)$--popular dual sets.
If we have
$$
    c \E_k (A)
        \le
    \sum_{x,y} P(x-y) A(x) A(y)
\m
$$
\begin{equation}\label{tmp:24.12.2012_star}
\m
    \sum_{z_1,\dots,z_{k-1}} \mathcal{P} (z_1,\dots,z_{k-1})
        A(x+z_1) \dots A(x+z_{k-1}) A(y+z_1) \dots A(y+z_{k-1})
\end{equation}
or, in other words, if $P,\mathcal{P}$ are {\it $(k,c)$--popular dual} sets
then, clearly,
$$
    c\E_k(A) \le \sum_{z\in P} |A_z|^k
        \quad \mbox{ and } \quad
            c\E_k (A) \le \sum_{(z_1,\dots,z_{k-1}) \in \mathcal{P}} \Cf_k^2 (A) (z_1,\dots,z_{k-1}) \,.
$$
So, there is a converse implication,
in some sense.

Note also that if $P$, $\mathcal{P}$ are $(k,c)$--popular dual sets
then by formulas (\ref{f:tensor_convolutions}), (\ref{f:tensor_convolutions_C_k})
for any integer $t$ tensor powers $P^\otimes$, $\mathcal{P}^\otimes$
form
$(k,c^t)$--popular dual sets for $A^\otimes$.
Another examples of popular dual sets are level popular difference sets, that is
$$
    P_i =\{ s ~:~ 2^{i-1} \E_k(A) (2|A|^2)^{-1} < |A_s|^{k-1} \le 2^{i} \E_k(A) (2|A|^2)^{-1} \} \,,
$$
and
$$
    \mathcal{P}_j = \{ (z_1,\dots,z_{k-1}) ~:~ 2^{j-1} \E_k (A) (4 |A|^k)^{-1} < \Cf_{k} (A) (z_1,\dots,z_{k-1}) \le 2^{j} \E_k (A) (4 |A|^k)^{-1} \}
$$
in the sense that there are $i,j\in [L]$ such that $P_i$ and $\mathcal{P}_j$
are $(k,2^{-2} L^{-2})$--popular dual sets, where $L = L(A) = \log (4 |A|^{k+1} \E^{-1}_k (A))$.
In the case we define
$$
    \D = \D(P) = \D(A,P) = 2^i \E_k(A) (2|A|^2)^{-1} \,,
$$
and
$$
    \D^* = \D(P^*) = \D(A,P^*) = 2^j \E_k (A) (4 |A|^k)^{-1} \,.
$$

\bigskip

The situation $k=2$ is the most interesting.
In the
case
there is a dual formula
\begin{equation}\label{f:dual_k=2}
    \sum_{x,y} A(x) A(y) g(x-y) \Cf_3 (h,A,A) (x,y)
        =
            \sum_{x,y} A(x) A(y) h(x-y) \Cf_3 (g,A,A) (x,y) \,,
\end{equation}
where $g,h : \Gr \to \C$ are any functions.
The formula above
shows that one has $(P^*)^* = P$ in the sense that the set $P$ is a popular dual from the family
of all popular dual sets of $P^*$.
We will write $P^*$ instead of $\mathcal{P}$ in the case
and $c$--popular dual instead of $(2,c)$--popular  dual.

\begin{example}
    Consider the set $A$ from Remark \ref{r:L+H}.
    The set of popular differences of $A$ naturally splits onto two sets
    $$
        P_1 = H = \{ x ~:~ |A_x| = |A| \}
        \quad
    \mbox{ and }
        \quad
        P_2 = \{ x \in (A-A) \setminus H ~:~ |A_x| = |H| \} \,.
    $$
    It is easy to check that $P_2 = P^*_1$ and vice versa.
    Note also that for $s\ge 1$ one has
    \begin{displaymath}
        \E_s (A) \sim |H| |A|^s + |A|^2 |H|^{s-1} \sim
            \left\{ \begin{array}{lll}
                    |H| |A|^s & \mbox{ if } & s\ge 2 \\
                    |A|^2 |H|^{s-1} & \mbox{ if } & s<2
            \end{array} \right.
    \end{displaymath}
    So, the sum over $P_1$ in $\E_s (A)$ dominates for large $s$
    and the sum over $P_2$ dominates for small $s$.
\label{r:L+H_E_s}
\end{example}

Now we prove the main result concerning properties of dual sets.
The most interesting part is inequality (\ref{f:k=2}), which
gives a non--trivial relation between $\E(A)$ and $\E_s (A)$, $s\in [1,2]$.
Bounds (\ref{f:D,D^*})--(\ref{f:s,s^*'}) and (\ref{f:k=2'})
say that there is a connection between some characteristics of dual sets.
As a consequence (see \cite{BK_AP3,BK_struct} or corollary and proposition  below), one can derive that
for any "regular"\, (see exact formulation below) set $A$  one can find a set $Q\subseteq A-A$
such that
$\E_Q (A) \gg \E^{1-} (A) := |A|^{3-} /K$ and $\sigma_Q (A) \gg |A|^{2-} / K^{1/2}$.

\bigskip

Consider the hermitian operator
\begin{equation}\label{def:T_op_P}
    \oT (x,y) = A(x) A(y) \sum_{z_1,\dots,z_{k-1}}
        \mathcal{P} (z_1,\dots,z_{k-1}) A(x+z_1) \dots A(x+z_{k-1}) A(y+z_1) \dots A(y+z_{k-1}) \,.
\end{equation}
It is easy to see that $\oT$ is nonnegative defined.
By formula (\ref{f:T*T})
for $k=2$ the operator coincide with $(\oT^A_{A,\mathcal{P}^c})^* \oT^A_{A,\mathcal{P}^c}$.

\begin{theorem}
Let $A\subseteq \Gr$ be a set.
Suppose that the notation above takes place.
Then there are two $(k,2^{-2} L^{-2})$--popular dual sets $P$, $\mathcal{P}$ such that
\begin{equation}\label{f:D,D^*}
    \D \D^* \le 16 L \mu_0 (\oT^{(A\c A)^{k-1}}_A) \,,
\end{equation}
and
\begin{equation}\label{f:s,s^*}
    \E^2_k (A) \le 16 L^2 \mu_0 (\oT^{(A\c A)^{k-1}}_A) \sigma_P (A) \sigma_{\mathcal{P}} (A) \,,
\end{equation}
\begin{equation}\label{f:P,P^*}
    \E^2_k (A) \le 2^{8} L^3 \mu^2_0 (\oT^{(A\c A)^{k-1}}_A) |P| |\mathcal{P}| \,,
\end{equation}
\begin{equation}\label{f:s,s^*'}
    \E^2_k (A)
        \le
            16 L^4 \sigma_P (A) \sigma_{\mathcal{P}} (A) \mu_0 (\oT) \,.
\end{equation}
In the case $k=2$ for any $s\in [1,2]$ the following holds
\begin{equation}\label{f:k=2}
    \E(A) \le
                    \mu_0 (\oT^{A\c A}_A)^{1-s/2} \E_s (A) 
\end{equation}
and if $P^*$ is $c$--dual to $P$ then
\begin{equation}\label{f:k=2'}
    c^2 \E^2_P (A) \le \sigma_{P^*} (A) \cdot \sum_{x,y\in P} \Cf^2_3 (A) (x,y) \,.
\end{equation}
\label{t:dual_bounds}
\end{theorem}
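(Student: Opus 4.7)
The plan is to first extract the level sets $P$ and $\mathcal{P}$ by a double pigeonhole, then derive the six inequalities by combining Cauchy--Schwarz with the pointwise dominance $\oT \le \oT^{(A\c A)^{k-1}}_A$. Following the construction preceding the theorem, partition the range of $|A_z|^{k-1}$ into $L$ dyadic levels $P_i$ and the range of $\Cf_k(A)(z_1,\dots,z_{k-1})$ into $L$ dyadic levels $\mathcal{P}_j$. The definition of popular difference sets gives $\sum_i\sum_{z\in P_i}|A_z|^k\ge \frac{1}{2}\E_k(A)$ and $\sum_j\sum_{\vec z\in \mathcal{P}_j}\Cf_k^2(A)(\vec z)\ge \frac{1}{2}\E_k(A)$, so pigeonhole on $(i,j)\in [L]^2$ produces $P=P_i$ and $\mathcal{P}=\mathcal{P}_j$ that are $(k,2^{-2}L^{-2})$-popular dual with levels $\D,\D^*$.

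For the general-$k$ bounds (\ref{f:D,D^*})--(\ref{f:s,s^*'}), the kernel of $\oT$ from (\ref{def:T_op_P}) is pointwise dominated by $(A\c A)^{k-1}(x-y) A(x) A(y)$: relaxing the $\mathcal{P}$-constraint inside $\Phi(x,y) = \sum_{\vec z\in \mathcal{P}} \prod_i A(x+z_i)A(y+z_i)$ gives $\prod_i\sum_{z_i} A(x+z_i)A(y+z_i) = (A\c A)^{k-1}(x-y)$, so Perron--Frobenius yields $\mu_0(\oT)\le \mu_0(\oT^{(A\c A)^{k-1}}_A)$. For (\ref{f:D,D^*}), rewrite the popular dual condition as $c\E_k(A) \le \sum_{\vec z\in \mathcal{P}}\sigma_P(A_{\vec z})$ and apply pigeonhole on $\vec z$ to find $\vec z^*\in \mathcal{P}$ with $\sigma_P(A_{\vec z^*}) \gg c\E_k(A)/|\mathcal{P}|$; then evaluate the quadratic form of $\oT^{(A\c A)^{k-1}}_A$ on $\chi_{A_{\vec z^*}}$, using $(A\c A)^{k-1}\ge \D/2$ on $P$, $|A_{\vec z^*}|\le \D^*$, and $|\mathcal{P}|(\D^*)^2\le \E_k(A)$ to obtain $\mu_0 \gg \D\D^*/L^2$. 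The inequalities (\ref{f:s,s^*})--(\ref{f:s,s^*'}) arise from Cauchy--Schwarz applied to the popular dual condition with weightings $\sigma_P\sigma_{\mathcal{P}}$, $|P||\mathcal{P}|$, and the full $\oT$ respectively, converted into the stated forms using the level-set identities $\D\sigma_P(A)\gg L^{-1}\E_k(A)$ and $\D^*\sigma_{\mathcal{P}}(A)\gg L^{-1}\E_k(A)$.

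For the $k=2$ bound (\ref{f:k=2}), apply (\ref{f:sum_eigenvalues}) and (\ref{f:sum_squares_eigenvalues}) to $\oT^{A\c A}_A$ to obtain $\sum_j\mu_j = |A|^2$ and $\sum_j\mu_j^2 = \E_3(A)$, whence $\E_3(A)\le \mu_0\cdot |A|^2$. At $s=2$ the inequality is trivial; at $s=1$ it reduces via Cauchy--Schwarz $\E(A)^2 \le |A|^2\E_3(A)$ and $\E(A)\le |A|^3$ to the elementary $\mu_0\ge \E(A)/|A|$. For intermediate $s\in(1,2)$, combine H\"older $\E(A)^2\le \E_s(A)\E_{4-s}(A)$ with log-convexity of $\E_r(A)$ in $r$ and the operator bound $\E_3\le \mu_0|A|^2$ to control $\E_{4-s}\le \mu_0^{2-s}\E_s$. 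For (\ref{f:k=2'}), start from the popular dual condition $c\E_P(A)\le \sum_{x,y,z}P(x-y)A(x)A(y)P^*(z)A(x+z)A(y+z)$; substitute $u=x+z$, $v=y+z$ to rewrite the right-hand side as $\sum_{z\in P^*}\sigma_P(A\cap(A+z))$; expand $\sigma_P(A\cap(A+z)) = \sum_a A(a)A(a+z)H(a,z)$ with $H(a,z) = \sum_{x\in P}A(a+x)A(a+x+z)$; apply Cauchy--Schwarz on the pair $(a,z)$ with weight $A(a)A(a+z)P^*(z)$ to get $c^2\E^2_P(A)\le \sigma_{P^*}(A)\sum_{z\in P^*}\sum_a A(a)A(a+z)H^2(a,z)$; drop the $P^*$-indicator (nonnegative summand) and identify $\sum_{z,a}A(a)A(a+z)H^2(a,z) = \sum_{x,y\in P}\Cf_3^2(A)(x,y)$ by a change of summation variable. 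The main obstacle is (\ref{f:k=2}) in the range $s\in(1,2)$: direct log-convex interpolation fails because $\log \E_s$ is convex in $s$, so the argument must route through the auxiliary quantity $\E_{4-s}$ with the eigenvalue bound $\E_3\le \mu_0|A|^2$ applied at precisely the right step.
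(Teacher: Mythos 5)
Your plan diverges from the paper at several points, and one of them leaves a genuine gap, so I will focus there after some shorter remarks.

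For (\ref{f:D,D^*}) you pigeonhole on $\vec z\in\mathcal{P}$ and evaluate the Rayleigh quotient of $\oT^{(A\c A)^{k-1}}_A$ at $\chi_{A_{\vec z^*}}$. This does produce a lower bound of the right order (using $(A\c A)^{k-1}\gs\D/2$ on $P$, $|A_{\vec z^*}|\le\D^*$, and $|\mathcal{P}|(\D^*)^2\ll\E_k(A)$), so it is a valid alternative to what the paper does, namely writing $c\E_k(A)\le\mathrm{tr}\,(\oT^P_A\,\oT)=\sum_\a\mu_\a(\oT^P_A)\langle\oT f_\a,f_\a\rangle$, bounding by $\mu_0(\oT^P_A)\sigma_{\mathcal{P}}(A)$ via Perron--Frobenius and nonnegativity of $\oT$, and then trading $\mu_0(\oT^P_A)\D\le 2\mu_0(\oT^{(A\c A)^{k-1}}_A)$ and $\D^*\sigma_{\mathcal{P}}(A)\le 2\E_k(A)$. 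However, your single-$\vec z^*$ pigeonhole throws away all information about $\mathcal{P}$ except its size, and it is not clear how you then recover (\ref{f:s,s^*'}), which depends on the \emph{second} operator $\oT$ whose kernel is built from the full set $\mathcal{P}$. The paper gets (\ref{f:s,s^*'}) by a Cauchy--Schwarz in the eigenbasis of $\oT^P_A$, followed by formula (\ref{f:sum_squares_eigenvalues}) (giving $\sum_\a\mu^2_\a(\oT^P_A)=\sigma_P(A)$) and Lemma \ref{l:convex_eigenvalues} (giving $\sum_\a\langle\oT f_\a,f_\a\rangle^2\le\sum_\a\mu^2_\a(\oT)\le\mu_0(\oT)\sigma_{\mathcal{P}}(A)$). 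You should either reproduce this eigenbasis step or explain an alternative; the one-sentence ``Cauchy--Schwarz with the full $\oT$ as weighting'' does not obviously do it. Your derivation of (\ref{f:k=2'}) via the substitution, expansion of $H(a,z)$, Cauchy--Schwarz, and dropping the $P^*$ indicator does work.

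The serious gap is (\ref{f:k=2}). You propose to derive $\E_{4-s}(A)\le\mu_0^{2-s}\E_s(A)$ from log-convexity of $\E_r(A)$ together with the eigenvalue bound $\E_3(A)\le\mu_0|A|^2=\mu_0\E_1(A)$, and then finish by the Cauchy--Schwarz $\E_2^2\le\E_s\E_{4-s}$. But this chain is not a formal consequence of convexity and $\E_3\le\mu_0\E_1$. Writing $\phi(r)=\log\E_r(A)$ and using only $\mu_0\ge\E_3/\E_1$, your route would require
$$\phi(2)-\phi(s)\le\Big(1-\tfrac{s}{2}\Big)\,\big(\phi(3)-\phi(1)\big)\qquad\text{for all }s\in[1,2],$$
which can fail for a convex $\phi$: for instance $\phi(1)=\phi(3/2)=0$, $\phi(2)=1/2$, $\phi(3)=3/2$ (slopes $0,1,1$, hence convex) gives left side $1/2$ and right side $3/8$ at $s=3/2$. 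So there is no abstract interpolation argument of the form you sketch; the inequality (\ref{f:k=2}) genuinely uses the dual-level-set structure, not just the trace identities. The paper proves (\ref{f:k=2}) from (\ref{f:D,D^*}): one takes $\t\D:=\min\{\D,\D^*\}\le(16L\mu_0)^{1/2}$, assumes without loss of generality it is attained at $P^*$, and bounds
$$\E(A)\le 4L^2\sum_{x\in P^*}|A_x|^2\le 4L^2(\t\D)^{2-s}\sum_{x\in P^*}|A_x|^s\le 4L^2(16L\mu_0)^{1-s/2}\E_s(A),$$
and then removes the logarithmic and constant factors by the tensor trick (replacing $A$ by $A^t$, using $\E_s(A^t)=\E_s(A)^t$, $\mu_0(\oT^{(A\c A)^\otimes}_{A^\otimes})=\mu_0(\oT^{A\c A}_A)^t$, $L(A^t)\le tL(A)$, and letting $t\to\infty$). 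You should replace the interpolation step with this argument: the key input that your sketch lacks is the pointwise upper bound $|A_x|\le\D^*$ on the popular level set, which is exactly what the level-set construction of $P^*$ gives you.
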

\begin{proof}
Firstly, we prove an approximate formulas up to some logarithms and constants
and after that we will use tensor trick (see e.g. \cite{tv}),
replacing $A$ by $A^t$, where $t$ is an integer.
By formula (\ref{f:tensor_convolutions}), we have $\E_s (A^t) = \E^t_s (A)$ for any $s$.
Applying Lemma \ref{l:tensor_operator}, we get
$\mu_0 (\oT^{(A\c A)^\otimes}_{A^\otimes}) = \mu^t_0 (\oT^{A\c A}_A)$.
Finally, $L(A^t) \le t L(A)$, where $L = \log (4 |A|^{k+1} \E^{-1}_k (A))$.

We begin with (\ref{f:D,D^*}).
Let $P=P_i$, $\mathcal{P} = \mathcal{P}_j$ are
$(k,2^{-2} L^{-2})$--popular dual sets, with $L$ defined above.
By the definition of the operator $\oT$ and formula (\ref{tmp:24.12.2012_star}), we have
\begin{equation}\label{tmp:14.12.2012_1}
    \E_k (A) 2^{-2} L^{-2}
        \le
            \sum_\a \mu_\a (\oT^P_A) \langle \oT f_\a , f_\a \rangle
                \le
\end{equation}
\begin{equation}\label{tmp:14.12.2012_2}
                \le
            \mu_0 (\oT^P_A) \sum_\a \langle \oT f_\a , f_\a \rangle =
                \mu_0 (\oT^P_A) \cdot \sum_{(z_1,\dots,z_{k-1}) \in \mathcal{P}} \Cf_k (A) (z_1,\dots,z_{k-1}) \,,
\end{equation}
where $\{ f_\a \}_{\a\in [|A|]}$ are the eigenfunctions of the operator $\oT^P_A$.
Here we have used the fact that $P$ is symmetric.
Multiplying
the last inequality
by $\D\D^*$ and using the definition of the sets $P$, $\mathcal{P}$, we get
$$
    \D \D^* \le 16 L \mu_0 (\oT^{P(A\c A)^{k-1}}_A) \le 16 L \mu_0 (\oT^{(A\c A)^{k-1}}_A) \,.
$$

Let us prove (\ref{f:s,s^*}), (\ref{f:P,P^*}) and (\ref{f:s,s^*'}).
Multiplying (\ref{tmp:14.12.2012_2}) by $\D \sigma_P (A)$, we get (\ref{f:s,s^*}).
Combining (\ref{f:D,D^*}) and (\ref{f:s,s^*}), we have (\ref{f:P,P^*}).
Returning to (\ref{tmp:14.12.2012_1}) and using Cauchy--Schwartz inequality, we obtain
$$
    \E^2_k (A) 2^{-4} L^{-4}
        \le
            \sum_\a \mu^2_\a (\oT^P_A) \cdot \sum_\a \langle \oT f_\a , f_\a \rangle^2 \,.
$$
Now applying formula (\ref{f:sum_squares_eigenvalues}) and Lemma \ref{l:convex_eigenvalues},
we have
$$
    \E^2_k (A) 2^{-4} L^{-4}
        \le
            \sigma_P (A) \sum_\a \mu^2_\a (\oT)
        \le
            \sigma_P (A) \mu_0 (\oT) \sum_\a \mu_\a (\oT)
                =
                    \sigma_P (A) \sigma_{\mathcal{P}} (A) \mu_0 (\oT)
$$
and (\ref{f:s,s^*'}) is proved.

In the case $k=2$, we have
$\t{\D} := \min \{ \D, \D^* \} \le (16 L \mu_0 (\oT^{A\c A}_A))^{1/2}$.
Suppose that the minimum in attained at $P^*$, the opposite case can be considered similarly.
Then
$$
    \E (A) \le 4 L^2 \sum_{x\in P^*} |A_x|^2 \le 4 L^2 (\t{\D})^{2-s} \E_s (A)
        \le
            4 L^2 (16L)^{1-s/2} \mu_0 (\oT^{A\c A}_A)^{1-s/2} \E_s (A) \,.
$$
Now using tensor trick, we obtain (\ref{f:k=2}).
Inequality (\ref{f:k=2'}) can be derived similarly.
This completes the proof.
$\hfill\Box$
\end{proof}

\bigskip

Example from Remark \ref{r:self-dual} shows that all inequalities of the theorem above are sharp.
We need in $\mu_0 (\oT^{A\c A}_A)$ in bound (\ref{f:k=2}) as
Example \ref{exm:H_cup_L} asserts us.


In Theorem \ref{t:dual_bounds} the quantity $\mu_0 (\oT^{(A\c A)^{k-1}}_A)$ or, more precisely,
quantity $\mu_0 (\oT^P_{A})$ for some popular set $P$ has appeared.
The next lemma shows that
one
can easily estimate
former
eigenvalue
for large subset of $A$.

\begin{lemma}
    Let $A\subseteq \Gr$ be a set.
    There is $A'\subseteq A$, $|A'| \ge |A|/2$
    such that $\mu_0 (\oT^P_{A'}) \le \frac{2\E(A)}{\Delta|A|}$
    for any set $P \subseteq \{ x ~:~ |A_x| \le \Delta \}$ and any real number $\Delta >0$.
    In particular, $\mu_0 (\oT^{A\c A}_{A'}) \le \frac{2\E(A)}{|A|}$.
\label{l:A'_0.5}
\end{lemma}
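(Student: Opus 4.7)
The plan is to build $A'$ by a single Markov-inequality cut on the averaged quantity
\[
T(x)\;:=\;\sum_{y \in A}(A \c A)(x-y)\;=\;\sum_{z} A(x-z)\,|A_z|.
\]
Summing $T(x)$ over $x\in A$ and using the basic identity $\sum_{x\in A} A(x-z)=|A_z|$ yields
\[
\sum_{x\in A} T(x)\;=\;\sum_z |A_z|^2\;=\;\E(A).
\]
Markov's inequality therefore gives $|A'|\ge |A|/2$ for the choice
$A':=\{x\in A\,:\,T(x)\le 2\E(A)/|A|\}$. This one Markov cut is the only place randomness of any kind enters; everything downstream is deterministic.

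For the ``in particular'' assertion the argument is then one line. The matrix $\oT^{A\c A}_{A'}$ has nonnegative entries, so Theorem \ref{t:Perron-Frobenius} bounds its spectral radius by the largest row sum:
\[
\mu_0(\oT^{A\c A}_{A'})\;\le\;\max_{x \in A'}\sum_{y \in A'}(A \c A)(x-y)\;\le\;\max_{x \in A'} T(x)\;\le\;\frac{2\E(A)}{|A|},
\]
the middle inequality using $A' \subseteq A$ together with nonnegativity of $A\c A$.

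For the general statement, the same Perron--Frobenius bound gives
$\mu_0(\oT^P_{A'})\le \max_{x\in A'}\sum_{y \in A'} P(x-y)$, and I would then produce the extra factor $1/\Delta$ by replacing the indicator $P(z)$ by the weighted ratio $|A_z|/\Delta$ on its support. Under the comparability of $|A_z|$ and $\Delta$ on $P$ enforced by the hypothesis, this converts the row sum to $(1/\Delta)\sum_{y\in A'}(A\c A)(x-y)\le T(x)/\Delta$, which is at most $2\E(A)/(\Delta|A|)$ for $x\in A'$, reducing the general claim to the estimate already established.

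The only subtle point is matching the direction of the comparison between $P(z)$ and $|A_z|/\Delta$ to the direction of the hypothesis so that the factor $1/\Delta$ really lands in the denominator of the final bound; once that pointwise comparison is set up correctly the rest is a direct application of Markov's inequality and the Perron--Frobenius row-sum bound, with no further combinatorial input required.
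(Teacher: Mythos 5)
Your approach --- a single Markov cut on the averaged quantity $T(x) = ((A\c A)*A)(x)$ followed by a Perron--Frobenius row-sum bound --- is essentially the paper's own proof. The paper phrases the second step via the eigenvalue relation $\mu_0 f_0(x) = A'(x)(P*f_0)(x)$, summed against the all-ones vector and using that the Perron eigenfunction $f_0\ge 0$; for a nonnegative symmetric matrix this is formally the same as the row-sum estimate you invoke. The Markov step and the identity $\sum_{x\in A}T(x)=\E(A)$ are exactly right.

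The step you flag as ``the only subtle point'' is, however, precisely where the proposal fails as written. To produce the $1/\Delta$ you need the pointwise bound $P(z)\le (A\c A)(z)/\Delta = |A_z|/\Delta$. Under the stated hypothesis $P\subseteq\{x:|A_x|\le\Delta\}$, for $z$ in the support of $P$ one has $|A_z|/\Delta\le 1 = P(z)$, i.e.\ the reverse inequality, so the substitution of $P(z)$ by $|A_z|/\Delta$ does not go through. Indeed the lemma as printed is false: take $\Delta$ large enough that $A-A\subseteq\{x:|A_x|\le\Delta\}$ and $P=A-A$; then $\mu_0(\oT^{A-A}_{A'})=|A'|\ge|A|/2$, which exceeds $2\E(A)/(\Delta|A|)$ once $\Delta>4\E(A)/|A|^2$. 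The hypothesis should read $P\subseteq\{x:|A_x|\ge\Delta\}$; then $P(z)\le|A_z|/\Delta$ holds for every $z$, the row sum at $x\in A'$ is bounded by $\sum_{y\in A'}P(x-y)\le\Delta^{-1}\sum_{y\in A'}(A\c A)(x-y)\le\Delta^{-1}T(x)\le 2\E(A)/(\Delta|A|)$, and your argument closes. This is the direction the paper's own proof uses at the step $\le\Delta^{-1}\sum_x f(x)((A\c A)\c A)(x)$, and it is also the direction every application of the lemma in Section~7 requires (the level sets $P_j$ there satisfy $|A_x|>\D_j/2$). So you isolated the right crux, but you needed to observe that the sign in the hypothesis must be reversed for the pointwise comparison --- and hence the lemma itself --- to be true, rather than leaving the direction unresolved.
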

\begin{proof}
Let
$$
    A_1 = \{ x ~:~ ((A*A)\c A) (x) > 2\E(A)/|A| \} \,.
$$
It is easy to see that $|A_1| < |A|/2$.
Put $A' = A\setminus A_1$ and let $f$ be the main eigenfunction of the operator $\oT^P_{A'}$.
Let also $\mu_0 = \mu_0 (\oT^P_{A'})$.
We have
$$
    \mu_0 f(x) = A'(x) (P* f)(x) \,.
$$
Summing over $x\in A'$ and using the definition of the set $A'$, we obtain
$$
    \mu_0 \sum_x f(x) = \sum_x f(x) (P\c A') (x) \le \Delta^{-1} \sum_x f(x) ((A\c A) \c A ) (x)
        =
$$
$$
    =   \Delta^{-1} \sum_x f(x) ((A*A)\c A) (x)
        \le
            \Delta^{-1} \frac{2\E(A)}{|A|} \cdot \sum_x f(x)
$$
and we are done.
$\hfill\Box$
\end{proof}

\bigskip

We need in an analog of a definition from \cite{Sh_doubling}.

\begin{definition}
    Let $\a > 1$ be a real number, $\beta,\gamma \in [0,1]$.
    A set $A\subseteq \Gr$ is called $(\a,\beta,\gamma)$--connected
    if for any $B \subseteq A$, $|B| \ge \beta|A|$
    the following holds
    $$
        \E_\a (B) \ge \gamma \left( \frac{|B|}{|A|} \right)^{2\a} \E_\a (A) \,.
    $$
\end{definition}

Thus, a set from Theorem \ref{t:BK_structural} is a $(2,\beta,\gamma)$--connected set
with $\beta,\gamma \gg 1$.
As was proved in \cite{Sh_doubling} that for $\a=2$ every set $A$ always contains large connected subset.

We obtain a consequence of Theorem \ref{t:dual_bounds} for connected sets $A$.
Our inequality (\ref{f:connected}) below shows that there is a nontrivial connection
between $\E(A)$ and $\E_s (A)$, $1\le s \le 2$ in the case.

\begin{corollary}
    Let $A\subseteq \Gr$ be a set, and $\beta,\gamma \in [0,1]$.
    Suppose that $A$ is $(2,\beta,\gamma)$--connected with $\beta \le 1/2$.
    Then there are two $2^{-6} \gamma L^{-2}$--popular dual sets $P$, $P^*$ such that
\begin{equation}\label{f:c_dd}
    \D \D^* \le \frac{2^{8} L^2 \E (A)}{\gamma |A|} \,,
\end{equation}
\begin{equation}\label{f:c_pp}
    L^{-5} \gamma^3 2^{-21} |A|^2 \le |P| |P^*| \,,
\end{equation}
and
\begin{equation}\label{f:c_ss}
    L^{-3} \gamma^2 2^{-13} \E(A) |A| \le \sigma_P (A) \sigma_{P^*} (A) \,.
\end{equation}
    Further for any $s\in [1,2]$ the following holds
\begin{equation}\label{f:connected}
    \E_s (A) \ge 2^{-5} \gamma |A|^{1-s/2} \E^{s/2} (A) \,.
\end{equation}
\label{c:connected}
\end{corollary}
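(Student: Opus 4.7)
The plan is to extract a large subset $A'\subseteq A$ on which we have a priori control of the main eigenvalue of $\oT^{A\c A}_{A'}$, then apply Theorem~\ref{t:dual_bounds} directly to $A'$ and transfer the resulting inequalities back to $A$ using the $(2,\beta,\gamma)$-connectedness. First I would invoke Lemma~\ref{l:A'_0.5} to produce $A'\subseteq A$ with $|A'|\ge |A|/2$ and $\mu_0(\oT^{A\c A}_{A'})\le 2\E(A)/|A|$; since $(A'\c A')(x)\le (A\c A)(x)$ pointwise, Theorem~\ref{t:Perron-Frobenius} (monotonicity of the dominant eigenvalue under entry-wise domination of nonnegative matrices) also gives $\mu_0(\oT^{A'\c A'}_{A'})\le 2\E(A)/|A|$. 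The hypothesis $\beta\le 1/2$ guarantees $|A'|\ge \beta|A|$, so connectedness yields $\E(A')\ge \gamma (|A'|/|A|)^4\E(A)\ge 2^{-4}\gamma\,\E(A)$.

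Next I would apply Theorem~\ref{t:dual_bounds} with $k=2$ to the set $A'$. This provides popular dual sets $P,P^*$ of $A'$ together with the $k=2$ instances of (\ref{f:D,D^*}), (\ref{f:P,P^*}), (\ref{f:s,s^*}) written for $A'$. These $P,P^*$ automatically lift to $2^{-6}\gamma L^{-2}$-popular dual sets of $A$: inserting $A'(x)\le A(x)$ on the right-hand side of (\ref{tmp:24.12.2012_star}) only increases it, while the left-hand side loses at most the factor $2^{-4}\gamma$ from $\E(A')\ge 2^{-4}\gamma\E(A)$. The three numerical bounds (\ref{f:c_dd}), (\ref{f:c_pp}), (\ref{f:c_ss}) then follow by substituting $\mu_0(\oT^{A'\c A'}_{A'})\le 2\E(A)/|A|$ and $\E(A')\ge 2^{-4}\gamma\E(A)$ into the respective inequalities of Theorem~\ref{t:dual_bounds} applied to $A'$, and using the trivial monotonicities $\sigma_P(A')\le \sigma_P(A)$, $\sigma_{P^*}(A')\le \sigma_{P^*}(A)$.

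For the main inequality (\ref{f:connected}) I would apply (\ref{f:k=2}) of Theorem~\ref{t:dual_bounds} to $A'$ and chain it with the estimates above together with the obvious monotonicity $\E_s(A')\le \E_s(A)$:
$$
2^{-4}\gamma\,\E(A)\le \E(A')\le \mu_0(\oT^{A'\c A'}_{A'})^{1-s/2}\E_s(A')\le (2\E(A)/|A|)^{1-s/2}\E_s(A).
$$
Rearranging and bounding $2^{-4-(1-s/2)}\ge 2^{-5}$ on $s\in[1,2]$ gives (\ref{f:connected}).

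The only delicate point I foresee is logarithmic bookkeeping: Theorem~\ref{t:dual_bounds} produces the logarithm $L(A')=\log(4|A'|^3/\E(A'))$ rather than $L=L(A)$, and these differ by at most an additive $O(\log(1/\gamma))$. Together with the slack $\gamma\le 1$ and $L\ge 1$, this drift is easily absorbed into the powers $L^{-2}$, $L^{-5}$, $L^{-3}$ appearing in (\ref{f:c_dd})--(\ref{f:c_ss}), which is why the corollary's stated constants are visibly weaker than what one reads off directly from Theorem~\ref{t:dual_bounds}; the structural content of the argument is the short chain \emph{large subset via Lemma~\ref{l:A'_0.5}} $\Rightarrow$ \emph{connectedness lower bound on $\E(A')$} $\Rightarrow$ \emph{Theorem~\ref{t:dual_bounds} for $A'$}.
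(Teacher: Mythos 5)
Your structural plan matches the paper's exactly: invoke Lemma~\ref{l:A'_0.5} to produce $A'\subseteq A$, $|A'|\ge|A|/2$, with controlled $\mu_0(\oT^{A\c A}_{A'})$; use $(2,\beta,\gamma)$-connectedness to lower bound $\E(A')$ by $2^{-4}\gamma\E(A)$; then run the dual machinery on $A'$ and lift back to $A$. The final chain you write for (\ref{f:connected}) is verbatim the paper's, including the eigenvalue comparison $\mu_0(\oT^{A'\c A'}_{A'})\le\mu_0(\oT^{A\c A}_{A'})$.

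The one genuine point of divergence is how you run step three. You apply Theorem~\ref{t:dual_bounds} as a black box to $A'$, so the popular dual pair $P,P^*$ that comes out is built from level sets of $|A'_x|$ and $(A'\c A')$, and the bookkeeping constant $L$ you get is $L(A')$. The paper instead rebuilds the internal argument: it defines $P=P_j$ explicitly as a level set of $|A_x|$ with thresholds $2^{j-1}\gamma\E(A)/(2^5|A|^2)$, applies Lemma~\ref{l:A'_0.5} directly to these $P_j$ to bound $\mu_0(\oT^{P_j}_{A'})\le 2\E(A)/(\D_j|A|)$, pigeonholes over $j$ to isolate one level, and only then applies the dual inequality (\ref{tmp:14.12.2012_1})--(\ref{tmp:14.12.2012_2}) from the proof of Theorem~\ref{t:dual_bounds}. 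This rebuilt argument lets the paper insert one extra use of the pigeonhole inequality $\D\sigma_P(A)\ge\E_P(A')\gg L^{-1}\gamma\E(A)$ when deriving (\ref{f:c_ss}), which is why (\ref{f:c_ss}) comes out with $L^{-3}$; your black-box use of (\ref{f:s,s^*}) of Theorem~\ref{t:dual_bounds} produces only $L(A')^{-2}$ in the same place. For $\gamma$ bounded away from zero and $L$ large the two are comparable (and yours is even sharper), but for very small $\gamma$ one has $L(A')\gg L$ and $L(A')^{-2}$ no longer dominates $L^{-3}$, so strictly speaking your route does not recover the stated constant in (\ref{f:c_ss}). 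The same $L$-vs-$L(A')$ drift affects (\ref{f:c_dd}) and (\ref{f:c_pp}), but there the stated constants have enough slack ($L^2$, $L^5$) that the $O(\log(1/\gamma))$ shift is genuinely absorbed, as you note. A second, smaller bookkeeping point: in your version $P$, $P^*$, $\D$, $\D^*$ are all the level objects intrinsic to $A'$, whereas the paper's $P$ is a level set of $|A_x|$; the corollary's statement is not explicit about which convention is meant, and the paper's own proof mixes the two (its $P$ is $A$-level, its $P^*$ is $A'$-level), so this is an ambiguity in the source rather than an error in your proposal, but it is worth flagging that the quantity $\D$ you bound is $\D(A',P)$ rather than $\D(A,P)$.
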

\begin{proof}
Let
$$
    P_j = \{ x ~:~ 2^{j-1} \gamma \E(A) /(2^5 |A|^2) < |A_x| \le 2^{j} \gamma \E(A) /(2^5 |A|^2) \} \,, \quad j\in [L] \,.
$$
Applying Lemma \ref{l:A'_0.5}, we find a set $A' \subseteq A$, $|A'| \ge |A|/2$
such that for any $j$ the following holds $\mu_0 (\oT^{P_j}_{A'}) \le 2 \E(A) / (\D_j |A|)$.
Here $\D_j = 2^{j} \gamma \E(A) /(2^5 |A|^2)$.
By connectedness, we have
\begin{equation*}\label{tmp:26.12.2012_2}
    \gamma 2^{-5} \E(A) \le 2^{-1} \E(A') \le \sum_{j=1}^L \sum_{x\in P_j} (A' \c A')^2 (x)
\end{equation*}
and for some $j\in [L]$ there is $P=P_j$ such that
\begin{equation}\label{tmp:26.12.2012_1}
    \gamma 2^{-5} L^{-1} \E(A) \le 2^{-1} L^{-1} \E(A') \le \sum_{x\in P} (A' \c A')^2 (x) = \E_P (A') \,.
\end{equation}
Of course
$$
    \E_P (A') \le \E_{P} (A) \le \D \sigma_P (A) \,.
$$
Consider $P^*$ and put $\D=\D_j$.
From (\ref{tmp:26.12.2012_1}) it follows that $P$, $P^*$ are
$2^{-6} \gamma L^{-2}$--popular dual sets.
Applying the arguments of Theorem \ref{t:dual_bounds} for second estimate from
(\ref{tmp:26.12.2012_1}), we obtain
\begin{equation}\label{tmp:26.12.2012_3}
    2^{-2} L^{-2} \E(A') \le \mu_0 (\oT_{A'}^P) \sigma_{P^*} (A')
        \le
            \frac{2\E(A)}{\D|A|} \cdot \sigma_{P^*} (A) \,.
\end{equation}
Multiplying the last inequality by $\D^*$ and using the connectedness again, we get
$$
    2^{-2} L^{-2} \D \D^* \E(A') \le \frac{2\E(A)}{|A|} 2 \E(A)
        \le
            2^{6} \gamma^{-1} \E(A') \frac{\E(A)}{|A|}
$$
and we obtain (\ref{f:c_dd}) for our $P$, $P^*$.

Further,
multiplying estimate (\ref{tmp:26.12.2012_3}) by $\sigma_P (A)$ and recalling (\ref{tmp:26.12.2012_1}), we have
$$
    \gamma 2^{-6} L^{-2} \E(A) \sigma_P (A) \D
        \le
            \frac{2\E(A)}{|A|} \cdot \sigma_{P^*} (A) \sigma_P (A) \,.
$$
By the definition of the set $P$ and inequality (\ref{tmp:26.12.2012_1}),
we get
$$
    \gamma 2^{-6} L^{-2} \E(A) \cdot \gamma 2^{-6} L^{-1} \E(A)
        \le
            \frac{2\E(A)}{|A|} \cdot \sigma_{P^*} (A) \sigma_P (A)
$$
and we obtain (\ref{f:c_ss}).
Combining (\ref{f:c_dd}) and (\ref{f:c_ss}), we get (\ref{f:c_pp}).

Finally,
applying inequality (\ref{f:k=2}) of Theorem \ref{t:dual_bounds},
we have
$$
    2^{-4} \gamma \E(A) \le \E(A') \le \mu^{1-s/2}_0 (\oT^{A'\c A'}_{A'}) \E_s (A')
        \le
                \mu^{1-s/2}_0 (\oT^{A\c A}_{A'}) \E_s (A)
            \le
                \left( \frac{2\E(A)}{|A|} \right)^{1-s/2} \E_s (A)
$$
and (\ref{f:connected}) follows.
$\hfill\Box$
\end{proof}

\bigskip

The example from Remark \ref{r:self-dual} shows that inequality (\ref{f:connected}) cannot be improved.
In the situation $P=P^* = \bigsqcup_{j=1}^k H_j$, so it is natural to call the set $A$ from the example as "self--dual"\, set.
Such sets have some interesting properties, for example, by corollary above they always
have relatively large $\sigma_P (A)$
and small $\D$.
The set from Remark \ref{r:L+H}, see also example \ref{r:L+H_E_s}, says that even
for connected $A$ estimate (\ref{f:connected}) does not satisfy for $s>2$.
So, in the region trivial estimates $\E_{s_2} (A) \le \E_{s_1} (A) |A|^{s_2-s_1}$, $s_2 \ge s_1$
can be sharp.
Finally, example \ref{exm:H_cup_L} shows that inequality (\ref{f:connected}) cannot hold for any $A$,
generally speaking, we need in connectedness of $A$,
or, similarly, we need in $\mu_0 (\oT^{A\c A}_A)$ not just $\E(A)/|A|$ to use inequality (\ref{f:k=2})
in the case of an arbitrary $A$.

Thus, in general, even for connected set $A$ we cannot find $P\subseteq A-A$
such that, roughly speaking,
$\E_P (A) \gg \E(A) = |A|^3 /K$ with $\sigma_P (A)$ greater than $|A|^2 / K^{1/2}$
and $\D \ll |A|/ K^{1/2}$.
Nevertheless if we
know lower bounds for
$\E_s (A)$, $s<2$ then
estimates
of Corollary \ref{c:connected}
can be improved.
We show this in the next two statements.

\begin{proposition}
    Let $A\subseteq \Gr$ be a set, $s\in (1,2]$ and $\beta,\gamma \in [0,1]$.
    Suppose that $A$ is $(s,\beta,\gamma)$--connected with $\beta \le 1/2$.
    Then there are two
    $$
        (2^{3s} \gamma^{-1} L^s)^{-1/(s-1)} \E^{1/(s-1)}_s (A) |A|^{-(4-2s)/(s-1)} \E^{-1} (A)
    $$
    --popular dual sets $P$, $P^*$ such that
\begin{equation}\label{f:c_dd+}
    \E_s (A) |A|^{s-1} \D (\D^*)^{s-1}
        \le
            2^{4s+3} \gamma^{-1} L^{s+1}
                \E^{s} (A) \,,
\end{equation}
and
\begin{equation}\label{f:c_ss+}
    \E^2_s (A) |A|^{s-1}
        \le
            2^{6s+1} \gamma^{-2} L^{s+1}
                \E^{s-1} (A)
                    \sigma^{s-1}_P (A) \sigma^{3-s}_{P^*} (A) \,.
\end{equation}
\label{p:E_s_E}
\end{proposition}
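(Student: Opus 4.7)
My plan is to repeat the argument of Corollary \ref{c:connected} after recalibrating all thresholds to $\E_s(A)$ in place of $\E(A)$. The $(s,\beta,\gamma)$-connectedness hypothesis replaces $(2,\beta,\gamma)$-connectedness, and the level decomposition of $A-A$ is shifted so that the tail of the convolution contributes negligibly to $\E_s$. The operator-theoretic backbone (Theorem \ref{t:dual_bounds}, Lemma \ref{l:A'_0.5}, and inequality (\ref{f:k=2})) transfers with only cosmetic changes.

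First I would set the threshold $\D_0 = (\gamma 2^{-2s-1}\E_s(A)/|A|^2)^{1/(s-1)}$ so that
$$\sum_{|A_x|\le \D_0}|A_x|^s \le \D_0^{s-1}\sum_x |A_x| \le \D_0^{s-1}|A|^2 = \gamma 2^{-2s-1}\E_s(A),$$
and form dyadic levels $P_j = \{x~:~2^{j-1}\D_0 < |A_x|\le 2^j\D_0\}$, $j\in[L]$, with $L\ll \log|A|$. Apply Lemma \ref{l:A'_0.5} to pass to a subset $A'\subseteq A$, $|A'|\ge |A|/2$, satisfying $\mu_0(\oT^{P_j}_{A'})\le 2\E(A)/(\D_j|A|)$ for every $j$. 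By $(s,\beta,\gamma)$-connectedness (using $|A'|\ge |A|/2 \ge \beta|A|$),
$$\gamma 2^{-2s}\E_s(A)\le \E_s(A')\le \gamma 2^{-2s-1}\E_s(A)+\sum_{j=1}^L\sum_{x\in P_j}|A'_x|^s,$$
so pigeonhole produces $P=P_j$ with $\sum_{x\in P}|A'_x|^s\gs \gamma 2^{-2s-1}L^{-1}\E_s(A)$, and since $|A'_x|\le \D:=\D_j$ on $P$,
$$\sigma_P(A)\ge \sigma_P(A')\gs \gamma 2^{-2s-1}L^{-1}\D^{1-s}\E_s(A).$$

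Next, I would replay the eigenvalue argument from the proof of Theorem \ref{t:dual_bounds} with the weighted operator $\oT^g_{A'}$ for $g(x)=P(x)(A\c A)^{s-1}(x)$. Expanding $\oT^g_{A'}$ in the eigenbasis of a second dyadic level $P^* = P_{j'}$ (now viewed as the spectral shell for the dual of $(A\c A)$) and invoking the analog of (\ref{tmp:14.12.2012_1})--(\ref{tmp:14.12.2012_2}) yields $\sigma_{P^*}(A)$ on the right-hand side. The entrywise domination $g\le \D^{s-1}P$ on $P$ gives $\mu_0(\oT^g_{A'})\le \D^{s-1}\mu_0(\oT^P_{A'})\le 2\D^{s-2}\E(A)/|A|$, and multiplying by $\D(\D^*)^{s-1}|A|^{s-1}$ produces (\ref{f:c_dd+}). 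The popularity constant of the pair $(P,P^*)$ is recovered by converting the $\sigma$-bounds to $\E_P(A)$, $\E_{P^*}(A)$ via $\E_P(A)\ge (\D/2)\sigma_P(A)$ and substituting the worst-case value $\D=\D_0$; this gives exactly $c\asymp \gamma^{1/(s-1)}L^{-O(1)}\E_s^{1/(s-1)}(A)|A|^{-(4-2s)/(s-1)}\E^{-1}(A)$, matching the stated popularity up to the power of $L$.

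For (\ref{f:c_ss+}) the asymmetric exponents $s-1$ and $3-s$ constitute the main obstacle. I would apply Cauchy--Schwartz to the spectral identity, replacing the product $\mu_0(\oT)\sum_\a\mu_\a(\oT)$ from the proof of (\ref{f:s,s^*'}) by a Hölder split with exponents $s-1$ and $3-s$ on the factors carrying, respectively, the weighting $(A\c A)^{s-1}$ on $P$ and the unweighted contribution on $P^*$. The delicate point will be tracking how the factor $(A\c A)^{s-1}$ propagates through $\oT^g_{A'}$ so that precisely $\sigma_P^{s-1}(A)$ (and not $\sigma_P(A)$) appears while maintaining the $|A|^{s-1}\E^{s-1}(A)$ factor on the right. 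This exponent bookkeeping is the analog of, but slightly more delicate than, the interpolation between the two regimes $s\in(1,3/2]$ and $s\in[3/2,3)$ in Theorems \ref{t:E_3_M} and \ref{t:E_4_M}, and it will determine the precise constants $2^{4s+3}$ and $2^{6s+1}$.
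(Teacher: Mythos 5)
Your set-up matches the paper through the selection of $A'$ via Lemma \ref{l:A'_0.5}, the dyadic decomposition, and the pigeonhole producing a level $P = P_j$ with $\E_s^P(A') \gg \gamma L^{-1}\E_s(A)$; the superficial discrepancy (you bin by $|A_x|$, the paper by $|A_x|^{s-1}$) is cosmetic. After that, however, you diverge from the paper and leave a real gap. The paper's key move — made explicitly in inequality (\ref{tmp:16.12.2012_0}) — is a Hölder split \emph{at the level of sums}: $\E^{P}_s(A') \le \E^{s-1}_P(A')\,\sigma^{2-s}_P(A')$. This reduces the $s$--energy on $P$ to the ordinary $2$--energy $\E_P(A')$, which is precisely the quantity the spectral/dual machinery of Theorem \ref{t:dual_bounds} controls (via $\E_P(A') \le 2L\sum_\a\mu_\a(\oT^P_{A'})\langle\oT f_\a,f_\a\rangle \le \tfrac{4L\E(A)}{\D|A|}\sigma_{P^*}(A)$). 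The asymmetric exponents $(\sigma^*)^{s-1}\sigma^{2-s}$ then drop out algebraically: one substitutes the dual bound into the Hölder inequality, compares with the pigeonhole lower bound and the trivial upper bound $\E_s^P(A)\le\D^{s-1}\sigma_P(A)$, and obtains (\ref{tmp:24.12.2012_1}); then (\ref{f:c_ss+}) follows by multiplying by $\sigma_P(A)$, and (\ref{f:c_dd+}) by multiplying by $(\D^*)^{s-1}\D^{2-s}$ and using $\D\sigma_P(A)\le 2\E(A)$, $\D^*\sigma_{P^*}(A)\le 2\E(A)$.

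Your proposal instead moves the weight $(A\c A)^{s-1}$ into the operator ($g = P\cdot(A\c A)^{s-1}$) and then tries to ``Hölder at the spectral level.'' You never actually carry this out, and you acknowledge that ``tracking how the factor $(A\c A)^{s-1}$ propagates through $\oT^g_{A'}$'' is the delicate point — but that delicate point is exactly what the paper's elementary pre-spectral Hölder step sidesteps. Concretely, your sketch of (\ref{f:c_dd+}) does not close: entrywise domination gives only an upper bound $\mu_0(\oT^g_{A'}) \le 2\D^{s-2}\E(A)/|A|$, which contains no reference to $P^*$ or $\D^*$; pairing this with the Rayleigh lower bound $\mu_0(\oT^g_{A'}) \gg \gamma L^{-1}\E_s(A)/|A|$ and ``multiplying by $\D(\D^*)^{s-1}|A|^{s-1}$'' produces an inequality with $\D^{3-s}(\D^*)^{s-1}$ on the left, not $\D(\D^*)^{s-1}$ — and the factor $(\D^*)^{s-1}$ on the right is unsupported since nothing in your chain ever introduced $\D^*$. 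You need to route through $\E_P(A')$ and its dual bound by $\sigma_{P^*}$ to make $\D^*$ appear legitimately, and that is exactly what the Hölder step enables. Without it, neither (\ref{f:c_dd+}) nor (\ref{f:c_ss+}) is actually derived in your argument.
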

\begin{proof}
Let
$$
    P_j = \{ x ~:~ 2^{j-1} \gamma \E_s (A) / (2^{1+2s} |A|^2)
        < |A_x|^{s-1} \le 2^{j} \gamma \E_s (A) / (2^{1+2s} |A|^2) \} \,, \quad j\in [L] \,.
$$
Applying Lemma \ref{l:A'_0.5}, we find a set $A' \subseteq A$, $|A'| \ge |A|/2$
such that for any $j$ the following holds $\mu_0 (\oT^{P_j}_{A'}) \le 2 \E(A) / (\D_j |A|)$.
Here $\D^{s-1}_j = 2^{j} \gamma \E_s (A) / (2^{1+2s} |A|^2)$.
By connectedness, we have
\begin{equation}\label{tmp:26.12.2012_2'}
    \gamma 2^{-1-2s} \E_s (A) \le 2^{-1} \E_s (A') \le \sum_{j=1}^L \sum_{x\in P_j} (A' \c A')^s (x)
\end{equation}
and for some $j\in [L]$ there is $P=P_j$ such that
\begin{equation}\label{tmp:26.12.2012_1'}
    \gamma 2^{-1-2s} L^{-1} \E_s (A) \le 2^{-1} L^{-1} \E_s (A') \le \sum_{x\in P} (A' \c A')^s (x)
        = \E^{P}_s (A') \,.
\end{equation}
Of course
$$
    \E^{P}_s (A') \le \E^{P}_s (A) \le \D^{s-1} \sigma_P (A) \,.
$$
By H\"{o}lder's inequality, we have
\begin{equation}\label{tmp:16.12.2012_0}
    \E^{P}_s (A') \le \E^{s-1}_P (A') \sigma^{2-s}_P (A')
        \,.
\end{equation}
Now let $P^*$ be a dual to the set $P$.
In particular
\begin{equation}\label{tmp:24.12.2012_0}
    \E_P (A') \le 2L \sum_\a \mu_\a (\oT^P_{A'}) \langle \oT f_\a , f_\a \rangle
        \le
            \frac{4L\E(A)}{\D|A|} \sigma_{P^*} (A) \,,
\end{equation}
where $\{ f_\a \}$ are the eigenfunctions of the operator $\oT^P_{A'}$
and $\oT$ is the operator defined by (\ref{def:T_op_P}) with $\mathcal{P} = P^*$.
From (\ref{tmp:26.12.2012_1'}), (\ref{tmp:16.12.2012_0}) and
a trivial upper bound $\sigma_P (A) \le |A|^2$
it follows that $P$ and $P^*$
are
$$
        (2^{3s} \gamma^{-1} L^s)^{-1/(s-1)} \E^{1/(s-1)}_s (A) |A|^{-(4-2s)/(s-1)} \E^{-1} (A)
$$
--popular dual sets.
Put $\sigma = \sigma_P (A)$ and $\sigma^* = \sigma_{P^*} (A)$.
Substitution (\ref{tmp:24.12.2012_0}) into (\ref{tmp:16.12.2012_0}) gives us
in view of inequality (\ref{tmp:26.12.2012_1'}) that
\begin{equation}\label{tmp:24.12.2012_1}
    \E_s (A) \D^{s-1}
        \le
            2^{4s-1} \gamma^{-1} L^s
                \left( \frac{\E(A)}{|A|} \right)^{s-1} (\sigma^*)^{s-1} \sigma^{2-s} \,.
\end{equation}
Multiplying by $\sigma$,
and using (\ref{tmp:26.12.2012_2'}),
we obtain
$$
    \E^2_s (A)
        \le
            2^{6s+1} \gamma^{-2} L^{s+1}
                \left( \frac{\E(A)}{|A|} \right)^{s-1}
                    (\sigma^*)^{s-1} \sigma^{3-s}
$$
and (\ref{f:c_ss+}) is proved.

Similarly, multiplying inequality (\ref{tmp:24.12.2012_1}) by $(\Delta^*)^{s-1} (\Delta)^{2-s}$
and using estimates
$$
    \D \sigma \le 2 \E(A) \,,\quad \quad \D^* \sigma^* \le 2 \E(A) \,,
$$
we have
$$
    \E_s (A) |A|^{s-1} \D (\D^*)^{s-1}
        \le
            2^{4s+3} \gamma^{-1} L^{s+1}
                \E^{s} (A) \,.
$$
This completes the proof.
$\hfill\Box$
\end{proof}

\bigskip

For example, if $\E(A) = |A|^3/K$, $\E_{3/2} (A) = |A|^{5/2} / K^{1/2}$
then from (\ref{f:c_dd+}) it follows that $\min \{ \D, \D^*\} \ll_L |A| / K^{2/3}$
instead of $\min \{ \D, \D^*\} \ll_L |A| / K^{1/2}$
which
is a consequence of
Theorem \ref{t:dual_bounds} provided by
we have an appropriate bound for $\mu_0 (\oT^{A\c A}_A)$
or Corollary \ref{c:connected}.

\begin{corollary}
    Let $A\subseteq \Gr$ be a set, $|A-A|\le K|A|$, and $s\in (1,2]$ be a real number.
    Then there are two
    \begin{equation}\label{f:pop_dual_c}
        (c L^{s})^{1/(s-1)} \frac{|A|^3}{K \E(A)}
    \end{equation}
    --popular dual sets $P$, $P^*$, where $c>0$ is an absolute constant,
     such that
\begin{equation}\label{f:c_dd+'}
    \D (\D^*)^{s-1} \ll L^{s+1} \cdot \frac{K^{s-1} \E^s (A)}{|A|^{2s}} \,,
\end{equation}
and
\begin{equation}\label{f:c_ss+'}
    |A|^{3s+1} \ll K^{2(s-1)} L^{s+1} \E^{s-1}(A) \sigma^{s-1}_P (A) \sigma^{3-s}_{P^*} (A) \,.
\end{equation}
\label{l:A-A_E}
\end{corollary}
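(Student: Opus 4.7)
The plan is to derive the corollary from Proposition \ref{p:E_s_E} by showing that the small doubling assumption $|A-A|\le K|A|$ provides exactly the kind of energy lower bound that the proposition's proof requires, even in the absence of explicit connectedness.

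First I would establish the universal lower bound $\E_s(A) \ge |A|^{s+1}/K^{s-1}$. This is H\"older's inequality applied to the identity $|A|^2 = \sum_{x \in A-A} |A_x|$, giving $|A|^{2s} \le |A-A|^{s-1} \E_s(A) \le (K|A|)^{s-1} \E_s(A)$. Moreover, for any $A' \subseteq A$ with $|A'| \ge |A|/2$ one has $|A'-A'| \le |A-A| \le 2K|A'|$, so the same H\"older step applied to $A'$ yields $\E_s(A') \gg_s |A|^{s+1}/K^{s-1}$.

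Next I would retrace the proof of Proposition \ref{p:E_s_E}, but using the unconditional quantity $\t{\E} := |A|^{s+1}/K^{s-1}$ as the reference in place of $\E_s(A)$ throughout. Concretely, I would define the dyadic level sets
\begin{equation*}
  P_j = \{ x ~:~ 2^{j-1} c\, \t{\E}/|A|^2 < |A_x|^{s-1} \le 2^j c\, \t{\E}/|A|^2 \}, \quad j \in [L],
\end{equation*}
with $c$ a sufficiently small absolute constant and $L = O(\log K)$; the latter holds because the relevant range of $|A_x|^{s-1}$ lies between $\sim \t{\E}/|A|^2 = c|A|^{s-1}/K^{s-1}$ and $|A|^{s-1}$, a ratio of $K^{s-1}$. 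The tail contributions with $|A_x|^{s-1}$ below the cutoff add at most $c\t{\E}$ to $\E_s(A')$ and can be discarded, so the previous paragraph guarantees that for some level $P = P_j$ one has $\E_s^P (A') \gg \t{\E}/L$. From this point the proof of Proposition \ref{p:E_s_E} runs verbatim: Lemma \ref{l:A'_0.5} controls $\mu_0(\oT^P_{A'}) \le 2\E(A)/(\D|A|)$, a H\"older split separates $\E_s^P(A')$ into $\E_P(A')$ and $\sigma_P(A)$ factors as in (\ref{tmp:16.12.2012_0}), and the operator/Cauchy--Schwarz step (\ref{tmp:24.12.2012_0}) then produces the analogs of (\ref{f:c_dd+}) and (\ref{f:c_ss+}) with $\E_s(A)$ replaced by $\t{\E}$. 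Plugging $\t{\E} = |A|^{s+1}/K^{s-1}$ into those bounds immediately gives (\ref{f:c_dd+'}), (\ref{f:c_ss+'}), and the popularity bound (\ref{f:pop_dual_c}).

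The main obstacle is verifying that the ``connectedness'' step in Proposition \ref{p:E_s_E} (where one needs $\E_s(A') \ge \gamma\, \E_s(A)$) can be safely replaced by the weaker, unconditional estimate $\E_s(A') \gg \t{\E}$: the actual value of $\E_s(A)$ can be strictly larger than $\t{\E}$, but none of the subsequent inequalities in the proof of the proposition ever exploit the upper part of $\E_s(A)$, so replacing it throughout with the reference quantity $\t{\E}$ is harmless and the arithmetic of exponents works out cleanly. Everything else is a direct transcription.
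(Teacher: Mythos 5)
Your proposal is correct and matches the paper's proof in every essential respect: both replace the connectedness hypothesis of Proposition~\ref{p:E_s_E} with the unconditional H\"older bound $\E_s(A')\ge 2^{-2s}|A|^{s+1}/K^{s-1}$ that follows from $|A'-A'|\le K|A|\le 2K|A'|$, both set up the dyadic level sets $P_j$ with cutoffs at $\sim |A|^{s-1}/K^{s-1}$, both invoke Lemma~\ref{l:A'_0.5} for the $\mu_0$ control, and both note that $\E_s(A)$ only enters the proof of the proposition as a lower bound so it can be swapped for the reference quantity $|A|^{s+1}/K^{s-1}$ throughout. Your write-up merely expands the paper's terse ``after that apply the arguments of Proposition~\ref{p:E_s_E}'' into an explicit verification, which is exactly what that sentence demands.
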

\begin{proof}
Let
$$
    P_j = \{ x ~:~ 2^{j-1} |A|^{s-1} / (2^{2s+1} K^{s-1}) < |A_x|^{s-1} \le 2^{j} |A|^{s-1} / (2^{2s+1} K^{s-1}) \} \,, \quad j\in [L] \,.
$$
Applying Lemma \ref{l:A'_0.5}, we find a set $A' \subseteq A$, $|A'| \ge |A|/2$
such that for any $j$ the following holds
$\mu_0 (\oT^{P_j}_{A'}) \le 2 \E(A) / (\D_j |A|)$.
Here $\D^{s-1}_j = 2^{j} |A|^{s-1} / (2^{2s+1} K^{s-1})$.
Application of H\"{o}lder inequality gives
$$
    \E_s (A') \ge 2^{-2s} |A|^{s+1} / K^{s-1} \,.
$$
Then for some $j\in [L]$ and $P_j = P$ the following holds
$$
    \E_s (A') \le 2L \sum_{x\in P} |A'_x|^s \,.
$$
After that apply the arguments of Proposition \ref{p:E_s_E}.
This concludes the proof.
$\hfill\Box$
\end{proof}


\bigskip

Now we
try to
prove an analog of Theorem \ref{t:E_4_T_4} with a weaker assumption on the set $A$,
namely, the largeness of the additive energy.
More precisely, we obtain a lower bound for $\E_4 (A)$, and the existence of structural subset $A' \subseteq A$
follows similarly as in Theorem \ref{t:E_4_T_4}.
Something can be proved using dual technique, but we add one more optimization in the argument.
Our result is a very simple, the reason why we cannot get
bounds
similar to Theorem \ref{t:E_4_T_4}
is discussed after Proposition \ref{p:E_4_T_4_E}.

\begin{proposition}
    Let $A\subseteq \Gr$ be a set, $\E (A) = |A|^{3}/K^{}$,
    and $\T_4 (A) = M|A|^7 / K^3$.
    Then
    \begin{equation}\label{f:E_4_E}
        \E_4 (A) \ge \frac{|A|^5}{2^{5} L^{10/3} M^{1/3} K^{7/3}}  \,.
    \end{equation}
\label{p:E_4_T_4_E}
\end{proposition}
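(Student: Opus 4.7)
My approach avoids any dyadic decomposition: the claim will follow by combining two elementary moment inequalities with the bound $(\E_{3/2}(A)/|A|)^8 \le \E_4(A)\T_4(A)$ displayed in the Remark following Theorem \ref{t:E_4_T_4}.

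First, H\"older's inequality with exponents $(3/2, 3)$ applied to the identity $\E(A) = \sum_s |A_s|\cdot |A_s|$ gives $\E(A) \le \E_{3/2}(A)^{2/3}\E_3(A)^{1/3}$, hence $\E_3(A) \ge \E(A)^3/\E_{3/2}(A)^2$. Cauchy--Schwarz applied to $\E_3(A) = \sum_s |A_s|\cdot|A_s|^2$ yields $\E_3(A)^2 \le \E(A)\E_4(A)$. Chaining these two inequalities produces
$$\E_4(A) \ge \frac{\E_3(A)^2}{\E(A)} \ge \frac{\E(A)^5}{\E_{3/2}(A)^4}.$$

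Next I would invoke the inequality $\E_{3/2}(A)^4 \le |A|^4 \E_4(A)^{1/2}\T_4(A)^{1/2}$, which is an equivalent form of the Remark bound. Substituting into the preceding estimate and rearranging yields
$$\E_4(A)^{3/2} \ge \frac{\E(A)^5}{|A|^4\T_4(A)^{1/2}}, \qquad \text{i.e.,} \qquad \E_4(A) \ge \frac{\E(A)^{10/3}}{|A|^{8/3}\T_4(A)^{1/3}},$$
after which inserting the hypotheses $\E(A) = |A|^3/K$ and $\T_4(A) = M|A|^7/K^3$ simplifies the right-hand side to $|A|^5/(K^{7/3}M^{1/3})$, which already implies (\ref{f:E_4_E}). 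The factor $L^{10/3}$ and the constant $2^5$ in the statement appear to be slack introduced by an alternative proof route -- presumably the ``one more optimization'' alluded to in the paper, in which a dyadic restriction to a popular level set is performed before applying the operator estimate.

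The main obstacle is verifying the Remark bound itself, which the paper asserts without proof. It follows from the operator argument of Theorem \ref{t:E_4_T_4} at $s = 3/2$: setting $\oT = \oT^{(A\c A)^{1/2}}_A$, testing against $1_A$ yields $\mu_0(\oT) \ge \langle \oT 1_A, 1_A\rangle/|A| = \E_{3/2}(A)/|A|$, while $\mu_0(\oT)^4 \le \tr(\oT^4)$ is by Cauchy--Schwarz bounded above by $\E_4(A)^{1/2}\T_4(A)^{1/2}$, once one combines the identity $\E_k(A) = \sum \Cf_k^2(A)$ with the elementary computation
$$\T_4(A) = \sum_{x_1+x_2+x_3+x_4 = 0}\prod_{i=1}^4 (A\c A)(x_i) = \sum_{\alpha,\beta,\gamma}(A\c A)(\alpha)(A\c A)(\beta-\alpha)(A\c A)(\gamma-\beta)(A\c A)(\gamma).$$
Squaring gives $\mu_0(\oT)^8 \le \E_4(A)\T_4(A)$, as needed.
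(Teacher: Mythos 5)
Your proof is correct and takes a genuinely different route from the paper's. The paper proves the proposition by passing to a dyadic popular difference set $P$ with level $\D$, extracting two inequalities in $\D$, namely $\E_4(A) \ge (8L)^{-1}K^{-1}\D^2|A|^3$ and $(\E(A)/(2L|A|))^8 \le \E_4(A)\D^4\T_4(A)$, and optimizing over $\D$; this is where the factor $2^5L^{10/3}$ enters. You instead avoid the dyadic restriction altogether: you chain H\"older, $\E(A) \le \E_{3/2}(A)^{2/3}\E_3(A)^{1/3}$, with Cauchy--Schwarz, $\E_3(A)^2 \le \E(A)\E_4(A)$, to get $\E_4(A) \ge \E(A)^5/\E_{3/2}(A)^4$, and then feed this into the operator estimate $(\E_{3/2}(A)/|A|)^8 \le \E_4(A)\T_4(A)$ from the Remark after Theorem~\ref{t:E_4_T_4}. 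Your verification of that Remark bound (via $\mu_0(\oT^{(A\c A)^{1/2}}_A) \ge \E_{3/2}(A)/|A|$, $\mu_0^4 \le \tr(\oT^4)$, Cauchy--Schwarz, and the identity $\sum_{\a,\beta,\gamma}(A\c A)(\a)(A\c A)(\beta-\a)(A\c A)(\gamma-\beta)(A\c A)(\gamma) = \T_4(A)$) is correct. The upshot is that your argument is both simpler and quantitatively stronger: you obtain $\E_4(A) \ge |A|^5/(M^{1/3}K^{7/3})$ with no logarithmic loss and no constant $2^5$, so the dyadic restriction in the paper's proof turns out to be unnecessary for this particular conclusion.
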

\begin{proof}
Let $P$ be a popular difference set, such that
\begin{equation}\label{tmp:24.12.2012_3}
    \E(A) \le 2L \sum_{x\in P} |A_x|^2 \,.
\end{equation}
From (\ref{tmp:24.12.2012_3}) we, clearly, have
\begin{equation}\label{tmp:24.12.2012_4}
    \E_4 (A) \ge (8L)^{-1} K^{-1} \D^2 |A|^3 \,.
\end{equation}
On the other hand from the arguments of Theorem \ref{t:E_4_T_4} it follows that
\begin{equation}\label{tmp:24.12.2012_5}
    \left( \frac{\E (A)}{2L|A|} \right)^8
        \le
            \E_4(A) \D^4 \T_4 (A)
                \le
                    \E_4(A) \D^4 \frac{M|A|^7}{K^3}  \,.
\end{equation}
Combining (\ref{tmp:24.12.2012_4}), (\ref{tmp:24.12.2012_5})
and making an optimization over $\D$, we obtain the result.
This completes the proof.
$\hfill\Box$
\end{proof}

\bigskip

The example from the Remark \ref{r:self-dual} shows that $K^{7/3}$ in (\ref{f:E_4_E}) cannot be replaced by
something smaller than $K^2$.
The reason why we have exactly $K^2$ in the example is clear.
Indeed, it is easy to see that there are $[K^{1/2}]$ eigenvalues equals, roughly, $\mu_0 (\oT^{A\c A}_A)$
in the case and our approximation of the sum $\sum_\a \mu^4_\a (\oT^{A\c A}_A)$
by just zero term is
unappropriate.
Quick calculations show that using all $[K^{1/2}]$ eigenvalues in the sum $\sum_\a \mu^4_\a (\oT^{A\c A}_A)$,
we
get
$K^2$.
It is interesting to obtain better estimate than (\ref{f:E_4_E}).

The same example shows that the way of finding structured $A'\subseteq A$
obtaining lower bounds for $\E_4 (A)$ is unappropriate in general.
Indeed, in the proof of Theorem \ref{t:E_4_T_4} we use Lemma \ref{l:E_k-identity}
which asserts that $\E_4 (A) = \sum_{s,t} \E (A_s,A_t)$.
But in the example for typical $(s,t)$ the energy $\E (A_s,A_t)$ is
pretty small and the average arguments give almost nothing.

\bigskip

\begin{remark}
    Of course the assumption of Theorem \ref{t:E_4_T_4}
    is stronger than the condition of Proposition \ref{p:E_4_T_4_E}.
    Such type of assumptions, namely,
    lower bounds for
    $\E_s (A)$, $s\le 2$
    can be interpreted as closeness of $A$ to be a set with small doubling.
    Indeed, by H\"{o}lder inequality all such conditions are included to each other
    and if $A$ has small doubling then all $\E_s (A)$, $s\ge 1$ are large.
\end{remark}

\bigskip

We conclude the section considering one more example of dual sets.
Let
$$
    P = \{ x ~:~ \E(A,A_x) \ge |A_x|^2 \E_3 (A) (2\E(A))^{-1} \} \,.
$$
Then by Lemma \ref{l:E_k-identity}
$$
    2^{-1} \E_3 (A) \le \sum_{x\in P} \E(A,A_x) = \sum_{x,y} A(x) A(y) (A\c A) (x-y) \Cf_3 (P,A,A) (x,y) \,.
$$
The last expression looks similar to (\ref{tmp:24.12.2012_2}).
As above define $P^*$ by formula
$$
    P^* = \{ x ~:~ |A_x| \ge \E_3 (A) (4\E(A))^{-1} \} \,.
$$
Thus
\begin{equation}\label{f:dual_E_3}
    2^{-2} \E_3 (A) \le \sum_{x,y} A(x) A(y) (A\c A) (x-y) P^* (x-y) \Cf_3 (P,A,A) (x,y) \,.
\end{equation}
Applying Cauchy--Schwartz inequality to
estimate
(\ref{f:dual_E_3}), we get
$$
    2^{-4} \E^2_3 (A) \le \E^{P^*}_3 (A) \cdot \sum_{x,y\in P} \Cf^2_3 (A) (x,y) \,.
$$
In particular,
$$
    \sum_{x\in P^*} |A_x|^3 \gg \E_3 (A)
$$
for a dual set $P^*$.
Using (\ref{f:dual_E_3}) one can obtain an analog of Theorem \ref{t:dual_bounds}
for such type of dual sets, of course.

\bigskip

\no{Division of Algebra and Number Theory,\\ Steklov Mathematical
Institute,\\
ul. Gubkina, 8, Moscow, Russia, 119991}
\\
and
\\
Delone Laboratory of Discrete and Computational Geometry,\\
Yaroslavl State University,\\
Sovetskaya str. 14, Yaroslavl, Russia, 150000
\\
and
\\
IITP RAS,  \\
Bolshoy Karetny per. 19, Moscow, Russia, 127994\\
{\tt ilya.shkredov@gmail.com}

\end{document}